\documentclass[reqno]{siamart0216}

\usepackage{amsmath, amsfonts, latexsym}
\usepackage{amssymb,epsfig,graphicx}
\usepackage{color}
\usepackage{hyperref}
\usepackage[textsize=small]{todonotes}
\usepackage{cancel}
\usepackage[T1]{fontenc}  % Pour afficher en DVI ou PDF les accents
\usepackage[utf8]{inputenc} % Car j’\'{e}cris selon le jeu de caract\`{e}res UTF-8

\usepackage{floatrow}
\usepackage{enumerate}

\usepackage{cancel}

%%--------------------------------------------------------
%%-           THEOREMS
%%--------------------------------------------------------
%%\theoremstyle{plain}
%\newtheorem{lemma}     {Lemma}[section]
%\newtheorem{prop}      [lemma]{Proposition}
%\newtheorem{corollary} [lemma]{Corollary}
%\newtheorem{assumption}{Assumption}
%%\theoremstyle{remark}
%
%\newtheorem{theorem}   [lemma]{Theorem}
%
%{
%%\theorembodyfont{\rmfamily}
%\newtheorem{defi}      [lemma]{Definition}
%\newtheorem{example}   [lemma]{Example}
%\newtheorem{rem}       [lemma]{Remark}
%\newtheorem{exercice}  [lemma]{Exercice}
%}
\newsiamthm{claim}{Claim}
\newsiamthm{prop}{Proposition}
\newsiamthm{assumption}{Assumption}
\newsiamremark{rem}{Remark}
\newsiamremark{expl}{Example}
\newsiamremark{hypothesis}{Hypothesis}
\crefname{hypothesis}{Hypothesis}{Hypotheses}
%\newsiamthm{lemma}{Lemma}

%-------------------------------
%- NEWCOMMANDS
%-------------------------------
\newcommand{\Eps}{\mathcal E}
\newcommand{\cblue} [1]{{\color{blue}  #1}}
\newcommand{\eps}{\epsilon}

\newcommand{\ma}{\alpha}

\newcommand{\ml}{\lambda}
\newcommand{\ms}{\sigma}
\newcommand{\mt}{\theta}
\newcommand{\mL}{\Lambda}
\newcommand{\mO}{\Omega}

\newcommand{\cS}{{\mathcal S}}

\newcommand{\N} {{\mathbb N}}
\newcommand{\R} {{\mathbb R}}

\newcommand{\conv}{\rightarrow}

\newcommand{\disp}{\displaystyle}

\newcommand{\be}{\begin{eqnarray}}
\newcommand{\ee}{\end{eqnarray}}

\newcommand{\beno}{\begin{eqnarray*}}
\newcommand{\eeno}{\end{eqnarray*}}

\newcommand{\<}{\langle}
\renewcommand{\>}{\rangle}

\newcommand{\barr}[1]{\begin{array}{#1}}
\newcommand{\earr}{\end{array}}

\usepackage{amsopn}
\DeclareMathOperator{\diag}{diag}
\DeclareMathOperator{\tridiag}{tridiag}
\DeclareMathOperator{\trace}{tr}

%-------------------------------

\newcommand{\COMMENT}[1]{}
%\definecolor{violet1}{RGB}{blue!20!red}
\newcommand{\vio}[1]{{\color{violet!50!red}#1}}

\newcommand{\fud}{\frac{1}{2}}
%------------------------------- 

\begin{document}

%\title[BDF2 schemes for HJ equations]{Stability and convergence of Second order backward difference schemes  for time dependent Hamilton-Jacobi equations}
\title{Stability and convergence of second order backward differentiation schemes for parabolic Hamilton-Jacobi-Bellman equations}

\headers{BDF2 schemes for HJB equations}{O. Bokanowski, A. Picarelli, C. Reisinger}

%\author[O. Bokanowski, A. Picarelli, C. Reisinger]{
\author{
Olivier Bokanowski\thanks{
Laboratoire J.-L. Lions, Universit\'e Pierre et Marie Curie 75252 Paris Cedex 05, France,
and UFR de Math\'ematiques, Site Chevaleret, Universit\'e Paris-Diderot, 75205 Paris Cedex, France
(\email{boka@math.univ-paris-diderot.fr}).}
\and
Athena Picarelli\thanks{Mathematical Institute, University of Oxford, Andrew Wiles Building, Woodstock Rd, Oxford OX2 6GG, UK (\email{athena.picarelli@gmail.com}, \email{christoph.reisinger@maths.ox.ac.uk})}
\and
Christoph Reisinger\footnotemark[2]
}

%\centerline{Version of \today}
\date{\today}

\maketitle

\begin{abstract}
We study a second order BDF (Backward Differentiation Formula)  scheme for 
the numerical approximation of parabolic HJB (Hamilton-Jacobi-Bellman) equations.
%parabolic linear and nonlinear partial differential equations
The scheme under consideration is implicit, non-monotone, and second order accurate in time and space.
%In the framework of Hamilton-Jacobi equations, t
The lack of monotonicity prevents the use of well-known convergence results for solutions in the viscosity sense.
%in
%G. Barles-P.E. Souganidis, "Convergence of approximation schemes for fully nonlinear second order equations",
%\emph{Asymptotic Analysis}, Vol. 4, pp.271-283, 1991.
In this work, we establish rigorous stability results in a general nonlinear setting as well 
as convergence results for some particular cases with additional regularity assumptions.
While most results are presented for one-dimensional, linear parabolic and non-linear HJB equations, 
some results are also extended to multiple dimensions and to Isaacs equations. 
% from the theoretical point of view.
Numerical tests are included to validate the method.
\end {abstract}

%\noindent
%{\bf Keywords}:
%diffusion equation, obstacle equation, viscosity solution, numerical methods, Crank Nicolson scheme, Backward Differentiation
%Formula, high order schemes.

\section{Introduction}\label{sec:intro}
%------------------------------------------
%------------------------------------------
This paper provides stability and convergence results for a type of implicit finite difference scheme
for the approximation of nonlinear parabolic equations using backward differentiation formulae (BDF). 
%of Hamilton-Jacobi-Bellman type.

In particular, we consider Hamilton-Jacobi-Bellman (HJB) equations of the following form:
\be\label{eq:HJB}
  v_t(t,x) +\sup_{a\in \mL}\Big\{\mathcal L^a[v](t,x) +r(t,x,a) v +\ell(t,x,a)\Big\}=0,
\ee
where $(t,x)\in [0,T]\times\R^d$, $\mL\subset \R^m$ is a compact set and 
$$
\mathcal L^a[v](t,x)=-\frac{1}{2}\trace[\Sigma(t,x,a)D^2_x v(t,x)] + b(t,x,a)D_x v(t,x)
$$
is a second order differential operator.
Here, $(\Sigma)_{ij}$ is symmetric non-negative definite for all arguments.
%=\sigma_i\rho_{ij}\sigma_j$ for $1\leq i,j\leq d$ 
%denotes the covariance matrix associated with the correlation $\rho$.
%{\color{blue}\fbox{Need more details.}}
Linear parabolic equations, corresponding to the case $|\mL|=1$, are a special case for which more comprehensive results are obtained in the paper.

It is well known that for nonlinear, possibly degenerate equations the appropriate notion of solutions 
to be considered is that of viscosity solutions \cite{CIL92}.
We assume throughout the whole paper the well-posedness of the problem, 
namely the existence and uniqueness of a solution in the viscosity sense.

Under such weak assumptions, convergence of numerical schemes can only be guaranteed if they satisfy certain monotonicity properties, in addition to the more
standard consistency and stability conditions for linear equations \cite{BS91}. This in turn reduces the obtainable consistency order to 1 in the general case \cite{godunov1959difference}. 

On the other hand, in many cases -- especially in non-degenerate ones -- solutions exhibit higher regularity and are amenable to higher order approximations.
The existence of classical solutions and their regularity properties under a strict ellipticity condition have been investigated, for instance, in  \cite{krylov1982boundedly,eva-len-81}.
%Classical solutions which are $C^{\cblue{1+\delta},2+\delta}$ in the interior exist under a strict ellipticity condition and certain regularity of the domain and the coefficients \cite{krylov1982boundedly}.

The higher order of convergence in both space and time of discontinuous Galerkin approximations is demonstrated theoretically and empirically in \cite{smears2016discontinuous}
for sufficiently regular solutions under a Cordes condition for the diffusion matrix, a measure of the ellipticity.
More recently, it was shown empirically in \cite{bokanowski2016high} 
that schemes based on first derivative approximations in time and space based on a second order backward differentiation formula 
(see, e.g., \cite{suli2003introduction}, Section 12.11, for the definition of BDF schemes for ODEs) have good convergence properties. 
In particular, in a non-degenerate controlled diffusion example therein where the second order, non-monotone Crank-Nicolson scheme fails to converge,
the (also non-monotone) BDF2 scheme shows second order convergence.

For constant coefficient parabolic PDEs, the $L^2$-stability and smoothing properties of the BDF scheme are a direct consequence of the strong A-stability of the scheme.
Moreover, \cite{beale09} shows that for the multi-dimensional heat equation the BDF time stepping solution and its first numerical derivative are stable in the maximum norm.
The technique, which is strongly based on estimates for the resolvent of the discrete Laplacian, do not easily extend to variable coefficients or the nonlinear case.

A more general linear parabolic setting is considered in \cite{becker1998second}, where second order convergence is shown for variable timestep using energy techniques.
This result is extended to a semi-linear example in \cite{Emmrich05}; the application to incompressible Navier--Stokes equations has been analysed in \cite{hill2000approximation}.
%\todo[inline]{(add): error estimates, more refs on BDF schemes for PDEs {\color{blue} AP: I think error estimates refs now is ok.} }
In \cite{bokanowski-debrabant}, a closely reltated BDF scheme is studied for a diffusion problem with an obstacle term (which includes the American option problem in 
mathematical finance). %An error bound of order $h^{1/2}$ is obtained for a specific second order BDF scheme. 

%\cblue{Numerical schemes: Convergence, high order, Beale, Filtered schemes for examples of BDF, classical BDF time stepping generally...}

The scheme we propose is constructed by using a second order BDF approximation for the first derivatives in both time and space. 
Combining this with the standard three-point central finite difference for the second spatial derivative in one dimension, 
the scheme is second order consistent by construction.

For this scheme, we establish new stability results in the $H^1$- and $L^2$-norms 
(see Theorems \ref{th:main} and \ref{th:L2stab}, respectively)
for linear parabolic PDEs and {their nonlinear HJB counterpart}.
%given certain regularity of the coefficients, 
{These generalize some results of \cite{becker1998second, Emmrich05, bokanowski-debrabant} to more general non-linear situations.}
From this analysis we deduce error bounds
for classical smooth and piecewise smooth solutions (see Theorems \ref{th:errors} and \ref{th:errors_piecewise}).
%{\color{red} Remove(?): From this analysis we deduce an error bound of order $h^{1/2+\delta}$ for data of regularity $C^{1+\delta, 2+ \delta}$ in some 
%well located regions (regular data otherwise), for some particular nonlinear HJB equations with controlled drift but uncontrolled diffusions (see Theorem XXX).}
Extensions of the results to Isaacs equations and the two-dimensional case are also given.

%\todo[inline]{At the end check:}
The outline of the paper is as follows.
In Section \ref{sec:main}, we define some specific BDF schemes and state the main results concerning well-posedness and stability in discrete $H^1$- or $L^2$-norms.
In Sections \ref{sec:scheme} and \ref{sec:3}
we prove the main results and give an extension from HJB to Isaacs equations.
In Section \ref{sec:L2}, we give further stability results in the discrete $L^2$-norm, which are weaker in the sense that they hold only for uncontrolled Lipschitz regulary 
diffusion coefficients,
but stronger in the sense that they allow for degenerate diffusion and can be extended to two dimensions.
In Section \ref{sec:error}, we deduce error estimates from the stability results and from the truncation error of the scheme for sufficiently regular solutions.
Section \ref{sec:num} studies carefully two numerical examples, the Eikonal equation and a second order equation with controlled diffusion.
Section \ref{sec:conclusion} concludes.
An appendix contains a proof of the existence of solutions for our schemes.

\section{Definition of the scheme and main result}\label{sec:main}
%------------------------------------------
%------------------------------------------

We focus in the first instance on the one-dimensional equation %in one dimension, $d=1$.
%In that case,
%we have 
%\be
%  \cL^a[v](t,x):= -\frac{1}{2}\ms^2(t,x,a) v_{xx}(t,x)  + b(t,x,a) v_x(t,x).
%\ee
%the equation is
\begin{subequations}\label{eq:HJ-1d}
\be
  & & v_t + \sup_{a \in \Lambda }\bigg(-\frac{1}{2}\ms^2(t,x,a) v_{xx}  + b(t,x,a) v_x + r(t,x,a) v + \ell(t,x,a)\bigg)
    = 0, \nonumber \\
  & & \hspace{7cm}
   \quad t\in[0,T],\ x\in \R,   \label{eq:HJ-1d.a}\\
  & & 
    v(0,x)=v_0(x) \quad x\in \R.
  \label{eq:HJ-1d.b}
\ee
\end{subequations}
%In order to simplify the notation, we also assume $\ell\equiv 0$ and $b^k_i\geq 0$  for any $i$ and $k$.
%\be\label{eq:linear}
%v_t -\frac{1}{2}Tr[\sigma\sigma^T(t,x)D^2_x v] -b(t,x)D_x v +\ell(t,x) =0
%\ee
%We will sligthly simplify the presentation of the 

It is known (see Theorem A.1 in \cite{BJ07}) that with the following assumptions:
\begin{itemize} 
\item[--] $\Lambda$ is a compact set,
\item[--]
for some $C_0>0$ the functions $\phi\equiv \sigma, b, r, \ell:[0,T]\times\R\times \Lambda\to\R$ and $v_0:\R\to\R$ satisfy for any $t,s\in [0,T]$, $x,y\in\R$, $a\in\Lambda$
\begin{eqnarray*}
& & |v_0(x)|+ |\phi(t,x,a)|\leq C_0, \\
& & |v_0(t, x) - v_0(s, y)|+|\phi(t, x,a) - \phi(s,y,a)|\leq C_0(|x-y|+|t-s|^{1/2}),
\end{eqnarray*}
\end{itemize}
there exists a unique bounded continuous viscosity solution of \eqref{eq:HJ-1d}.

We will make individual assumptions for each result as we go along, but in general assume a unique and continuous solution (e.g.\ to define the classical truncation error).
%\cblue{\fbox{Regularity by Krylov-Evans -- check references.}}

\subsection{The BDF2 scheme}
%------------------------------------------

\newcommand{\Xmin}{x_{\min}}
\newcommand{\Xmax}{x_{\max}}
For the approximation in the $x$ variable, we will consider the PDE on a truncated domain $\mO:=(\Xmin,\Xmax)$,
where $\Xmin<\Xmax$. 
%\todo[inline]{check if domain open everywhere} %% OLIVIER : LOOKS OK TO ME 

\if{
%--------------
%- ENCOURS 
%--------------
We will assume that the solution $v(t,x)$ is known outside of $\mO$ (or that we have the knowledge of a good approximation
of it).
This setting is used in this work because 
we do not want to focus on the boundary approximations. Since we shall propose second order schemes
in the space variable, the stencil around a given node $x_i$ may includes nodes $x_{i\pm1}$ and $x_{i\pm 2}$.
%$\{,i\pm 1,i\mp 2\}$. 
We will assume that no error is coming from the boundary, leaving this technical point to further analysis.

Note that we could have also consider periodic boundary conditions in order to avoid boundary issues.
}\fi

Let $N\in\N^* \equiv \N\backslash \{0\}$ the number of time steps, $\tau:=T/N$ the time step size, and $t_n = n \tau$, $n=1,\ldots, N$.
Let $I\in \N^*$ the number of interior mesh points, 
%$h:=\frac{\Xmax-\Xmin}{I+1}$ 
and define a uniform mesh $(x_i)_{1\leq i\leq I}$ with mesh size $h$ by
$$
  x_i:= \Xmin + ih, \quad i \in \mathbb{I} = \{1,\dots,I\}, \quad \mbox{where}\quad
  h:=\frac{\Xmax-\Xmin}{I+1}.
$$ 

Hereafter, we denote by $u$ a numerical approximation of $v$, the solution of~\eqref{eq:HJB}, i.e. 
$$u^k_i\sim v(t_k,x_i).$$  
For each time step $t_k$, the unkowns are the values $u^k_i$ for $i=1,\dots,I$.
%while the other values will correspond to boundary values.

Standard Dirichlet boundary conditions %we would limit the grid to $(x_i)_{1\leq i\leq I}$ and use
use the knowledge of the values at the boundary, $v(t,\Xmin)$ and $v(t,\Xmax)$. 
Here, as a consequence of the size of the stencil for the spatial BDF2 scheme below, we will assume that values at the two left- and right-most mesh points are given,
that is, $v(t,x_j)$ for $j\in \{-1,0\}$ as well as
$j\in \{I+1, I+2\}$ are known (corresponding to the values at the points $(x_{-1},x_0,x_{I+1},x_{I+2})\equiv (\Xmin-h,\Xmin,\Xmax,\Xmax+h)$).\footnote{
In practice, this means that a sufficiently accurate approximation of these ``boundary values'' has to be available.
Boundary approximations with modified schemes are commonly used and
are not the focus of this paper; it is seen in \cite{picarelli2017error} that the use of a lower order scheme in the vicinity of the boundary does
not affect the global provable convergence order.}

%\todo[inline]{TBC; cite Julen papers}

We then consider the following scheme, for $k\geq 2$, $i\in \mathbb{I}$, % and all $i=1,\dots,I$:
\begin{eqnarray}
\label{eq:BDF2}
&& \hspace{0.5 cm} \mathcal S^{(\tau,h)}(t_k,x_i,u^k_i,[u]_i^k) =
\\
  && \hspace{0.6 cm}  \;\; \frac{3 u^{k}_i - 4u^{k-1}_i + u^{k-2}_{i}}{2\tau} 
    +\ \sup_{a\in \mL} \Big\{L^a[u^k](t_k,x_i)  + r(t_{k},x_i,a) u^{k}_i + \ell(t_{k},x_i,a) \Big\} \ = \ 0, 
   \nonumber
%  & & 
%    \hspace{8cm}  
    %\mbox{for $k\geq 2$, $i\in\mathbb I$,}
\end{eqnarray}
where we denote as usual by $[u]_i^k$ the numerical solution excluding at $(t_k,x_i)$, and
$$
L^a[u](t_k,x_i) := -\frac{1}{2} \sigma^2(t_{k},x_i,a) D^2 u^{}_i
      + b^+(t_k,x_i,a) D^{1,-} u^{}_i 
    - \; b ^-(t_{k},x_i,a) D^{1,+} u^{}_i,
$$
\beno
  D^2 u_i:=\frac{u_{i-1} - 2 u_i + u_{i+1}}{h^2},
\eeno
(the usual second order approximation of $v_{xx}$),
$b^+:=\max(b,0)$ and {$b^-:=\max(-b,0)$}
denote the positive and negative part of $b$, respectively, 
%$b^\pm$ denote the positive (resp. negative) part of $b$, 
and where a second order left- or right-sided BDF approximation is used for the first derivative in space:
\be\label{eq:spaceBDF}
  D^{1,-} u_i:= \frac{3 u_i - 4 u_{i-1} + u_{i-2}}{2 h}
  \quad \mbox{and} \quad 
  D^{1,+} u_i:= -\bigg(\frac{3 u_i - 4 u_{i+1} + u_{i+2}}{2h}\bigg). 
\ee
Note in particular the implicit form of the scheme \eqref{eq:BDF2}.
The existence of a unique solution of this nonlinear implicit scheme will be addressed later on. %in section~\eqref{eq:2.3}.

We will also define the numerical Hamiltonian associated with the scheme:
$$
  H[u](t_k,x_i):= \sup_{a\in \mL} \Big\{L^a[u](t_k,x_i)  + r(t_{k},x_i,a) u_i + \ell(t_{k},x_i,a) \Big\}.
  %H[u](t_k,x_i):= \sup_{a\in \mL} \Big\{L^a[u](t_k,x_i)  + r(t_{k},x_i,a) u^{k}_i + \ell(t_{k},x_i,a) \Big\}.
$$
%and will also denote $H[u]_i:=H[u](t_k,x_i)$ when there is no abiguity.

As discussed above, the scheme is completed by the following boundary conditions:
\beno %\label{eq:bc}
  u^k_i:= v(t_k,x_i),\quad \forall i\in \{-1,0\} \cup \{I+1,I+2\}.
\eeno
Since \eqref{eq:BDF2} is a two-step scheme, for  the first time step $k=1$,
$i \in \mathbb{I}$, we use a backward Euler step,
%for all $i=1,\dots,I$:
%\begin{subequations}
\be
\label{eq:IEscheme}
&& %\hspace{0.5 cm}
  \mathcal S^{(\tau,h)}(t_1,x_i,u^1_i,[u]_i^1) =
  \\
  && \hspace{2 cm}  \;\; 
   \frac{u^{1}_i - u^{0}_i}{\tau} 
    +\ \sup_{a\in \mL} \Big\{L^a[u^1](t_1,x_i)  + r(t_{1},x_i,a) u^{1}_i + \ell(t_{1},x_i,a) \Big\}  =  0,  
    \nonumber
%  & & 
%    \hspace{8cm} 
%    i=1,\dots,I
    \label{eq:BDF2-step1}
\ee
and 
\be
  u^0_i= v_0(x_i), \quad i \in \mathbb{I} %i=1,\dots,I.
  %\\
  %& & \hspace{8cm} 
  %      \mbox{for $1\leq i\leq I$.}
\ee
%\end{subequations}
is given by the initial condition (\ref{eq:HJ-1d.b}).

%We could use also a second order time stepping (such as Crank-Nicolson scheme) for the first step . 

\begin{rem}
As the backward Euler  step is only used once, it does not affect the overall second order of the scheme.
\end{rem}

\begin{rem}
Most of our results also apply to the scheme obtained by replacing 
the BDF approximation \eqref{eq:spaceBDF} of the drift term by a centred finite difference approximation:
\be\label{eq:spaceC}
{\widetilde D}^{1,\pm} u_i:= \frac{ u_{i+1}-u_{i-1} }{2h}. 
\ee
However, numerical tests (see Section \ref{sec:num1}) 
show that the BDF upwind approximation as in \eqref{eq:spaceBDF} has a  better behaviour in some extreme 
cases where the diffusion vanishes.
We shall give a rigorous stability estimate for the BDF scheme
in the linear case even for possibly vanishing diffusion
(Section~\ref{sec:linear1d}).
\end{rem}  
%\cblue{(last sentence to be removed if we cannot say why...
%or maybe say only that an upwind approx in the case of degenerate diffusion seems a better chioce)}.
%be\label{eq:CFDscheme}
%\frac{3 u^{k+1}_i -4 u^{k}_i +u^{k-1}_i}{2\tau} -\frac{(\sigma^k_i)^2}{2}%\frac{u^{k+1}_{i-1}-2u^{k+1}_i +u^{k+1}_{i+1}}{h^2} +b^k_i\; \frac{ u^{k+1}_{i+1}-u^{k+1}_{i-1}}{2h} = 0
%\ee
 %\\The scheme reads:
%\be\label{eq:BDFscheme}
%$\frac{3 u^{k+1}_i -4 u^{k}_i +u^{k-1}_i}{2\tau} -\frac{(\sigma^k_i)^2}{2}\frac{u^{k+1}_{i-1}-2u^{k+1}_i +u^{k+1}_{i+1}}{h^2} +b^k_i\; \frac{3 u^{k+1}_i-4u^{k+1}_{i-1}+u^{k+1}_{i-2}}{2h} = 0
%\ee
%for $k=2\ldots N-1$ and $i\in \Z$. 

%---------------------------------------
%- OLIVIER : REMOVED THE REMARK FOR FIRST ORDER B.C.
%---------------------------------------
\if{
\begin{rem}
For computational purposes we may also want to consider the equation in a bounded domain 
$
\overline\Omega=[x_{\min},x_{\max}]. 
$ 
Given $J\in \N$ and defined the space step $h=(x_{\max}-x_{\min})/J$, one has
$
x_i = x_{\min} + i h 
$
and $\mathbb I=\{0,\ldots,J\}$.
In this case, the scheme has to be modified for  $i=1,J-1$, In particular, we will replace $D^{1,+}u$ (resp. $D^{1,-}u$) below with an upwind first order approximation at $i=J-1$ (resp. $i=1$). We can obtain the results presented in the paper also in this framework under some Dirichlet boundary condition. See also Remark \ref{rem:B.C.} %This gives $e_i^k=0$ for $i=0,J$, $k\geq 0$. 
%One can show that for the result in Proposition \ref{prop:L2_1d_deg} we  also need to require the knowledge of the solution at the external point $x_{-1}$ (resp. $x_{J+1}$ if $b(t_k,x_i)\leq 0, \forall k,i$), so that $e_i^k=0$ for $i=0,1,J$ (resp. $i=0,J,J+1$).
\end{rem}
}\fi

%\section{Main result and discussion}\label{sec:main}
%----------------------------------------
%----------------------------------------

\subsection{Definitions and main results}
%------------------------

In the remainder of this paper, we prove various stability and convergence results for the scheme \eqref{eq:BDF2}. 
We state in this section the first main well-posedness and stability results.

Let $u$ denote the  solution of \eqref{eq:BDF2}
%$$ 
%  \mbox{\fbox{$\mathcal S^{(\tau,h)}$ to be defined precisely}}
%$$
and let $v$ be the solution of~\eqref{eq:HJB}. 
The error associated with the scheme is then defined by  
$$
  E^k_i:=u^k_i - v(t_k,x_i). %  \qquad k=0,\ldots,N, i\in \mathbb I.
$$
%We also denote $v^k_i:=v(t_k,x_i)$ \cblue{(We could say instead: ``
For any function $\phi$ we will also use the notation $\phi^k_i:=\phi(t_k,x_i)$
%' because probably we will need this notation also later on)}
as well as 
%$u^k:=(u^k_i)_{1\leq i\leq I}$, $v^k:=(v^k_i)_{1\leq i\leq I}$ 
$\phi^k:=(\phi^k_i)_{1\leq i\leq I}$
and $[\phi]_i^k := (\phi^m_j)_{(j,m)\neq(i,k)}$,
and the error vector at time $t_k$ is defined by
$$ 
  E^k:=(E^k_1, \dots, E^k_I)^T \ = \ u^k - v^k.
$$

The consistency error will be denoted by $\Eps^k(\phi):=(\Eps^k_i(\phi))_{1\leq i\leq I}\in \R^I$ and is defined in the classical way 
as follows, for any smooth enough function $\phi$:
\begin{small}
\be
  \Eps^k_i(\phi):=  
   \mathcal S^{(\tau,h)}(t_k,x_i,\phi^k_i,[\phi]_i^k) -
   \bigg(\phi_t +\sup_{a\in \mL}\Big\{\mathcal L^a[\phi](t_k,x_i) +r(t_k,x_i,a) \phi +\ell(t_k,x_i,a)\Big\}\bigg).
   \nonumber 
   \\
 \label{eq:consist}
\ee
\end{small}
By extension, for the exact solution $v$ of \eqref{eq:HJB}, we will simply define 
\be
 \label{eq:consist_sol}
\Eps^k_i(v):=\mathcal S^{(\tau,h)}(t_k,x_i,v^k_i,[v]^k_i).
\ee
Note that \eqref{eq:consist_sol} is well-defined for any continuous function.

%for any $k=1,\ldots N$ and $i\in\mathbb I$. 
%The consistency error $\Eps^k_i$ is determined by a Taylor expansion in a neighborhood of $(t_k,x_i)$ 
%and depends on the value of the high order derivatives of $\phi$. 
In particular for the scheme \eqref{eq:BDF2} 
it is clear that we have second order consistency in space and time,
that is, 
\be\label{eq:consist-1}
  |\Eps^k_i(\phi)|\leq c_1(\phi)\tau^2 + c_2(\phi) h^2
\ee
for sufficiently regular data $\phi$.%

\if{
More precisely,
let $C^{p,q}$ denote the set of functions which are $p$ times differentiable with respect to $t$ and $q$ times differentiable with respect
to $x$.
Assuming that $\phi\in C^{3,4}$, by Taylor expansion
the constant $c_1(\phi)$ depends on $\phi_{3t}$ (the third derivative of $\phi$ with respect to the time variable),
and the constant $c_2(\phi)$ depends on $\phi_{3x},\phi_{4x}$, locally around the point of interest.
}\fi

Throughout the paper, $A$ will denote the finite difference matrix associated to the second order derivative, i.e.
\be\label{eqdef:A}
 A: = \frac{1}{h^2}
  \begin{pmatrix} 
   \phantom{-} 2  & -1  & 0     &       &  \\
   -1  &  \phantom{-}2  & -1   &  \ddots & \ddots  \\
       &  \phantom{-}-1  & \ddots&\ddots&  0&   \\
       &     & \ddots&\ddots&   -1  \\
       &     &       &  -1    &  \phantom{-}2
  \end{pmatrix}.
\ee
Let $ \<x,y\>_A:=\<x,Ay\>$. %Notice that $|x|_A^2\equiv \<x,Ax\>$ 
Then we consider the $A$-norm defined as follows:
\be\label{eq:Anorm}
  |x|_A^2:=  \<x,Ax\> = \sum_{1\leq i\leq I+1} \left(\frac{x_i-x_{i-1}}{h}\right)^2
\ee
(with the convention in \eqref{eq:Anorm} that $x_0=x_{I+1}=0$). 
%(with $|\cdot|_A$ defined in (\ref{eq:Anorm}))
\COMMENT{\vio{\\ \fbox{AP:} this is true with zero B.C., in general:
$$
h^2 \<x, Ay\> = \sum^{I+1}_{i=1} (x_i-x_{i-1})(y_i-y_{i-1}) - (x_1-x_0)(y_1-y_0)-(x_{I+1}-x_I)(y_{I+1}-y_I)+x_1y_1+x_Iy_I.
$$
}
}
%by straightforward calculations,
Hence, $\sqrt{h}|x|_A$ approximates the $H^1$ semi-norm in $\Omega$.
Similarly, we will consider later the standard Euclidean norm defined by $ \|x\|^2:=  \<x,x\>$,
such that $\sqrt{h}\|x\|$ approximates the $L^2$-norm.
%(and associated scalar product $(x,y)_A:=(x,Ay)$).

{Our first result concerns the solvability of the numerical scheme $\mathcal{S}^{(\tau,h)}$
(seen as an equation for $u^k$, with $[u]^k$ given) and is the following.}

%\noindent
%{\bf Assumption (A1)}
%\begin{assumption}[\bf (A1)]\label{ass:A1}

\medskip
{\sc Assumption (A1).}%\label{ass:A1}
\ \ $\sigma, b$ and $r$ are bounded functions.
%\end{assumption}

\begin{theorem}\label{th:existence}
Let (A1)
and the following CFL condition hold:
\be
  \label{eq:CFL}
  %\delta:=
  \|b\|_\infty \frac{\tau}{h} < C. %\frac{3}{2}.
\ee
Then, for $\tau$ small enough
and $C=3/2$ (resp. $C=1$) there exists a unique solution of the scheme \eqref{eq:BDF2}
 for $k\geq 2$ (resp. $k=1$, for scheme \eqref{eq:IEscheme}). % for $k=1$.
\end{theorem}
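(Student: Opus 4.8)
The plan is to solve the scheme one time level at a time. Fix $k\ge 2$; since $u^{k-1},u^{k-2}$ are already known, the relation \eqref{eq:BDF2} is a nonlinear system $F(u^k)=0$ for the unknown vector $u^k\in\R^I$, where $F(u)_i=\frac{3}{2\tau}u_i+H[u](t_k,x_i)-g_i$ and $g_i=\frac{4u^{k-1}_i-u^{k-2}_i}{2\tau}$ collects the known data. Writing the supremum over a pointwise choice of controls (a ``policy'') $\alpha=(\alpha_i)_i$, the Hamiltonian linearises as $H[u](t_k,x_i)=\sup_\alpha\{(B^\alpha u)_i+c^\alpha_i\}$, where $B^\alpha=\frac{3}{2\tau}I+(\text{discretisation of }L^{\alpha}+r)$ is, for each policy, a fixed (sparse) matrix. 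My strategy is to show that under (A1) and the CFL condition every $B^\alpha$ is \emph{strictly diagonally dominant by rows with positive diagonal}, uniformly in $\alpha$, and to deduce solvability from this single structural fact; the case $k=1$ is identical with $\frac{3}{2\tau}$ replaced by $\frac1\tau$.

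The key computation is the assembly of the row entries of $B^\alpha$ at an interior node. Using the stencils of $D^2$, $D^{1,-}$ and $D^{1,+}$, the diagonal entry is $\frac{3}{2\tau}+\frac{\sigma^2}{h^2}+\frac{3|b|}{2h}+r$ (with all coefficients evaluated at $(t_k,x_i,\alpha_i)$, and using $b^++b^-=|b|$), while the off-diagonal entries are $+\frac{b^+}{2h}$ at $u_{i-2}$, $-\frac{\sigma^2}{2h^2}-\frac{2b^+}{h}$ at $u_{i-1}$, $-\frac{\sigma^2}{2h^2}-\frac{2b^-}{h}$ at $u_{i+1}$, and $+\frac{b^-}{2h}$ at $u_{i+2}$, so that the sum of their absolute values is $\frac{\sigma^2}{h^2}+\frac{5|b|}{2h}$. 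The crucial point is that the far entries $+\frac{b^\pm}{2h}$ are \emph{positive}: these are exactly the non-monotone contributions produced by the second order BDF stencil \eqref{eq:spaceBDF}, and they raise the off-diagonal weight of the drift from the ``monotone'' value $\frac{3|b|}{2h}$ to $\frac{5|b|}{2h}$. Since the $\frac{\sigma^2}{h^2}$ terms cancel between diagonal and off-diagonal, strict diagonal dominance reduces to $\frac{3}{2\tau}+r>\frac{|b|}{h}$. Under (A1), $r$ is bounded, so for $\tau$ small enough this holds as soon as $\frac{3}{2\tau}>\frac{\|b\|_\infty}{h}$, that is $\|b\|_\infty\frac{\tau}{h}<\frac32$; for the implicit Euler step the weight is $\frac1\tau$ and the threshold becomes $\|b\|_\infty\frac{\tau}{h}<1$. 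This is precisely how the constants $C=3/2$ and $C=1$ arise. At the near-boundary nodes the stencil reaches into the prescribed boundary values, which are moved to the right-hand side, so those rows lose off-diagonal mass and dominance only improves.

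From uniform strict diagonal dominance, uniqueness is immediate and does \emph{not} require any comparison principle: if $F(u)=F(u')$, choose an index $i$ with $|u_i-u'_i|=\|u-u'\|_\infty$ and a control $a^*$ attaining the supremum on the larger side at that node; subtracting the two identities and using $\sup f-\sup g\le \sup(f-g)$ gives $B^{a^*}_{ii}|u_i-u'_i|\le\big(\sum_{j\ne i}|B^{a^*}_{ij}|\big)\|u-u'\|_\infty$, which contradicts the dominance of $B^{a^*}$ unless $u=u'$. The genuinely delicate part is \emph{existence}: a naive nonlinear Jacobi iteration, solving each coordinate through its own positive diagonal, is a sup-norm contraction only when the diffusion coefficient is essentially policy-independent, because across different controls the cancellation of the $\frac{\sigma^2}{h^2}$ terms is lost and the contraction constant may exceed $1$ for small $h$. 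I therefore expect this to be the main obstacle, and would obtain existence by a continuation/degree argument on the control-to-state map $\Phi(u)_i=\sup_\alpha\{(B^\alpha u)_i+c^\alpha_i\}$: $\Phi$ is Lipschitz and, by the injectivity estimate just given applied along the homotopy $B^\alpha(s)=\frac{3}{2\tau}I+s(\text{discretisation of }L^\alpha+r)$, $s\in[0,1]$, it remains proper and injective with nonvanishing topological degree, the case $s=0$ being trivially solvable; surjectivity at $s=1$ follows. (This is the argument carried out in detail in the appendix.) The heart of the whole proof is thus the quantitative, policy-uniform diagonal dominance, which replaces the monotonicity that the scheme lacks.
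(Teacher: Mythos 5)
Your proposal is correct, and its quantitative core coincides with the paper's: the assembly of the row entries, the identification of the far entries $+b^{\pm}/(2h)$ as the non-monotone contributions that raise the off-diagonal drift weight from $3|b|/(2h)$ to $5|b|/(2h)$, the cancellation of the $\sigma^2/h^2$ terms, and the resulting dominance margin $\tfrac{3}{2\tau}+r-\tfrac{|b|}{h}$ (resp.\ $\tfrac{1}{\tau}+r-\tfrac{|b|}{h}$ for the Euler step), which is exactly where $C=3/2$ and $C=1$ come from; your boundary-row remark and your uniqueness argument via an extremal index also match the paper's setting (note only that the supremum need not be attained unless $\Lambda$ is compact and the coefficients continuous in $a$, so one should work with $\epsilon$-optimal controls, a caveat the paper itself makes elsewhere). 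Where you genuinely diverge is the existence mechanism. The paper (Lemma~\ref{lem:exist}, proved in Appendix~\ref{app:A}) splits $M_a=L_a+U_a$ with $L_a$ the lower triangle including the diagonal, solves the lower-triangular Bellman system exactly by forward substitution, and shows this nonlinear Gauss--Seidel map is a sup-norm contraction with constant $\delta=\sup_a\|L_a^{-1}U_a\|_\infty$, bounded by the quotient in \eqref{eq:lem:exist-2}; this is constructive, elementary, and doubles as an algorithm. You instead derive the quantitative injectivity bound $\mu\|u-u'\|_\infty\le\|F(u)-F(u')\|_\infty$ from policy-uniform strict dominance and conclude by continuation/degree; this is sound (the dominance margin persists along your homotopy for $\tau\le\tau_0$, zeros stay uniformly bounded, and the $s=0$ problem is affine), and once you have properness plus injectivity you could even dispense with degree via invariance of domain. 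Your route buys generality — no splitting structure is used — at the cost of constructiveness.

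One remark in your proposal is wrong, although it does not invalidate the proof: the nonlinear Jacobi iteration \emph{is} a policy-uniform sup-norm contraction here, even with controlled diffusion. Its Lipschitz constant is $\sup_{a}\max_i \big(\sum_{j\ne i}|(M_a)_{ij}|\big)/(M_a)_{ii}$, and for each fixed $a$ in the supremum the same-row $\sigma^2(t_k,x_i,a)$ appears in numerator and denominator with the \emph{same} control, so no cross-policy cancellation is required; with (A1) the off-diagonal row sums are bounded by some $K$ at fixed $(\tau,h)$ and the dominance margin $\mu>0$ is uniform, giving a contraction constant at most $1-\mu/(K+\mu+3/2)<1$. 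This constant degrades as $h\to 0$, but for existence and uniqueness at a fixed mesh that is irrelevant, so your detour through degree theory, while valid, was not forced by the controlled diffusion. Relatedly, your parenthetical claim that the continuation argument ``is carried out in detail in the appendix'' misdescribes the paper: Appendix~\ref{app:A} proves the Gauss--Seidel contraction, not a homotopy argument.
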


The scheme is hence well-defined even if $\ms$ vanishes. A uniform ellipticity condition for $\ms$ will be needed for proving the $H^1$ stability of the scheme.

%\begin{assumption}\label{ass-A2}
%$(A2)$ 
%\medskip
%\noindent
%{\bf Assumption (A2)}

\medskip
{\sc Assumption (A2).} \ \ 
There exists $\eta>0$ such that 
%for any $t\in[0,T],x\in[\Xmin,\Xmax]$ and $a\in\mL$,
$$
  \inf_{t\in[0,T]} \inf_{x\in\Omega} \inf_{a\in\mL} \ \sigma^2(t,x,a)\ \geq\ \eta.
$$
%\end{assumption}
We provide a relaxation of the ellipticity condition for stability in the Euclidean norm in Section \ref{sec:linear1d}.

Our main stability result is the following.

\begin{theorem}\label{th:main}
%Let the following assumptions be satisfied:
Assume (A1), (A2), as well as the CFL condition \eqref{eq:CFL}.
%$(A1)$ $b$ and $r$ are bounded functions.\\ % $\|b\|_\infty,\|r\|_\infty<\infty$;\\
%$(A2)$ $\exists \eta>0$ such that 
%$$
%  \inf_{t\in[0,T]} \inf_{x\in\cblue{\Omega}} \inf_{a\in\mL} \ \sigma^2(t,x,a)\ \geq\ \eta.
%$$
Then
%$(ii)$ 
%the following strong stability result holds:
there exists a constant $C\geq 0$ (independent of $\tau$ and $h$) and $\tau_0>0$ such that, for any $\tau\leq\tau_0$,
%\be\label{eq:main-stabH1}
%\max_{2\leq k\leq N}|u^k|_A^2 
%   & \leq & C \Big(|u^0|_A^2 + |u^1|_A^2 +\cblue{\tau \sum^N_{k=2}|\ell(t_k,\cdot)|_A^2}\Big).
%\ee
%Moreover, 
%if $v$ is a classical solution of \eqref{eq:main:linear}, for $\tau\leq \tau_0$  the following inequality holds for the error $E^k$:
\be\label{eq:main-errorH1}
\max_{2\leq k\leq N}|E^k|_A^2 
   & \leq & C \Big(|E^0|_A^2 + |E^1|_A^2 +\tau\sum_{2\leq k\leq N} |\Eps^k(v)|_A^2\Big).
\ee
\end{theorem}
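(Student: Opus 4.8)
The plan is to derive an energy estimate for the error by testing the error equation against $A E^k$ and exploiting the strong A-stability of the BDF2 scheme at the algebraic level. First I would write the equation satisfied by the error. Since $u$ solves $\mathcal S^{(\tau,h)}(t_k,x_i,u_i^k,[u]_i^k)=0$ and the consistency error for the exact solution is $\Eps^k(v)=\mathcal S^{(\tau,h)}(t_k,x_i,v_i^k,[v]_i^k)$, subtracting gives an equation of the schematic form
\be
  \frac{3E^k - 4E^{k-1} + E^{k-2}}{2\tau} + \big(H[u^k] - H[v^k]\big) = -\Eps^k(v),
  \nonumber
\ee
where the Hamiltonian difference must be controlled. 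The key structural point is that $\sup_a\{\cdots\} - \sup_a\{\cdots\}$ can be bounded above and below by the operator $L^{a^*}$ evaluated at a maximizing control, so that the nonlinear term is replaced by a linear operator $L^{a^*}[E^k]$ with coefficients that still satisfy (A1), (A2). This is the standard device that lets one treat the HJB case by the same algebra as the linear case, uniformly in the (frozen) control.

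Next I would take the inner product of the error equation with $A E^k$ to produce the $A$-norm. The three-point term $\langle 3E^k - 4E^{k-1}+E^{k-2}, AE^k\rangle$ is handled by the classical BDF2 telescoping identity: there is a quadratic form $G(E^k,E^{k-1})$ (built from $|E^k|_A^2$ and $|2E^k-E^{k-1}|_A^2$, the Nevanlinna–Odeh / $G$-stability functional) such that
\be
  \langle 3E^k - 4E^{k-1}+E^{k-2}, AE^k\rangle \;\geq\; G(E^k,E^{k-1}) - G(E^{k-1},E^{k-2}) + c\,|E^k - 2E^{k-1}+E^{k-2}|_A^2,
  \nonumber
\ee
so summation over $k$ telescopes and yields control of $\max_k G(E^k,E^{k-1})$, hence of $\max_k |E^k|_A^2$. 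For the spatial operator term $\langle L^{a^*}[E^k], AE^k\rangle$, I would split it into the diffusion and drift contributions. The diffusion part contributes, after using $D^2 = -A$ on interior nodes, a term like $\tfrac12\langle \sigma^2 A E^k, A E^k\rangle$ which by (A2) is bounded below by $\tfrac{\eta}{2}|A E^k|^2 \geq c\,|E^k|_A^2$ (a coercive, good-sign term). The drift part $\langle b^\pm D^{1,\mp} E^k, A E^k\rangle$ is the dangerous one: it is not sign-definite, but using the CFL condition \eqref{eq:CFL} and Cauchy–Schwarz it can be absorbed into the coercive diffusion term and into $|E^k|_A^2$, at the cost of a constant depending on $\|b\|_\infty$.

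I expect the main obstacle to be precisely this control of the non-symmetric drift term $b^+ D^{1,-} - b^- D^{1,+}$ in the $A$-norm. Because the spatial discretization uses the one-sided second-order BDF stencils \eqref{eq:spaceBDF} rather than a monotone upwind scheme, $\langle b^\pm D^{1,\mp}E^k, A E^k\rangle$ couples non-neighbouring nodes and does not have an obvious sign; one must show it is dominated by the diffusion coercivity plus a zeroth-order remainder uniformly in the frozen control $a^*$, which is where both (A2) and the CFL restriction \eqref{eq:CFL} enter in an essential way. A secondary technical point is the treatment of the variable coefficient $\sigma^2(t_k,x_i,a^*)$ inside the inner product: since it depends on $i$, it does not commute with $A$, so I would either freeze/compare coefficients and bound commutator-type errors by $\|x\|\,|x|_A$ (using Lipschitz regularity of $\sigma$ and absorbing via Young's inequality), or argue via the quadratic form directly. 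Once these bounds are in place, a discrete Grönwall argument over $k$ (the $C_0 |E^k|_A^2$ lower-order terms generate the exponential constant) closes the estimate, with the $\tau\sum_k |\Eps^k(v)|_A^2$ term arising from Cauchy–Schwarz on the right-hand side and the $|E^0|_A^2+|E^1|_A^2$ terms from the initial value of the $G$-functional and the separate backward-Euler first step \eqref{eq:IEscheme}.
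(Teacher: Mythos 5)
Your proposal is correct in its overall architecture and coincides with the paper's proof in the two substantive steps: the control-freezing linearization of the Hamiltonian difference (the paper's Lemma \ref{lem:nonlin2lin} -- note only that the upper and lower bounds use \emph{different} optimal controls, one for $H[u^k]$ and one for $H[v^k]$, and the intermediate-value step then writes $H[u^k]_i-H[v^k]_i$ as a \emph{convex combination} of the two frozen linear operators, not as a single $L^{a^*}[E^k]$), and the pairing with $AE^k$ in which the diffusion coercivity $\langle \Delta^k AE^k, AE^k\rangle \geq \tfrac{\eta}{2}\|AE^k\|^2$ absorbs, via Young's inequality, the non-sign-definite drift contribution bounded by $2\|b\|_\infty\|AE^k\|\,|E^k|_A$ (Lemma \ref{lem:vect2Anorm}). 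Where you genuinely diverge is the treatment of the two-step recursion: you invoke the classical Nevanlinna--Odeh $G$-stability functional so that the BDF2 time term telescopes and a discrete Gr\"onwall argument closes the estimate, whereas the paper derives the non-telescoping scalar inequality \eqref{eq:one_step1} and proves a self-contained ``universal stability lemma'' (Lemma \ref{lem:stability}) by factoring $M_\tau=(3-C\tau)I-4J+J^2$ as in \eqref{eq:Mtau}, showing $M_\tau^{-1}\geq 0$ componentwise with explicit, uniformly bounded and monotone entries $a_p$. Your route is the more standard one in the BDF literature; the paper's lemma buys norm-independence, which is then reused verbatim for the Euclidean-norm results (Theorem \ref{th:L2stab} and its degenerate and two-dimensional variants).

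Two of your anticipated obstacles are misplaced, and one matters for the hypotheses. First, the CFL condition \eqref{eq:CFL} plays no role in controlling the drift term: the absorption works purely through Cauchy--Schwarz, the bounds $\|\delta E^k\|,\|\delta^2 E^k\|\leq h|E^k|_A$, and the $\eta$-coercivity; \eqref{eq:CFL} is needed only for well-posedness of the implicit scheme (Theorem \ref{th:existence}), so assuming it is harmless but it is not where the drift estimate lives. Second, and more importantly, there is no commutator issue with the variable coefficient $\sigma^2(t_k,x_i,a)$: since $\Delta^k$ is a diagonal matrix sitting between two copies of the \emph{same} vector $AE^k$, coercivity is immediate from (A2), with no need to freeze coefficients. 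You should take your ``argue via the quadratic form directly'' branch and discard the Lipschitz fallback -- Lipschitz continuity of $\sigma^2$ is precisely assumption (A3), which is \emph{not} available in Theorem \ref{th:main}; it is what the paper needs only for the Euclidean-norm stability (Theorem \ref{th:L2stab}), where one tests with $E^k$ instead of $AE^k$ and a summation by parts genuinely produces difference quotients of $\sigma^2$. Had your argument required Lipschitz $\sigma$, it would prove a weaker theorem than the one stated.
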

The proof of Theorem \ref{th:main} will be the subject of Section~\ref{sec:3}.

\begin{rem}
As a consequence of the stability result and
under further mild regularity assumptions on the boundary data,
we can deduce that 
the scheme \eqref{eq:BDF2}
%-\eqref{eq:spaceBDF} 
%(resp. \eqref{eq:BDF2}-\eqref{eq:spaceC}) 
is $A$-norm bounded: 
\be\label{eq:main-stabH1}
  \max_{2\leq k\leq N}|u^k|_A^2  \leq C,
\ee
%. More precisely, there exists $C\geq 0$ and $\tau_0>0$ such that, for any  $\tau\leq \tau_0$,
% \be\label{eq:main-stabH1}
%   \max_{2\leq k\leq N}|u^k|_A^2 
%    & \leq & C \Big(|u^0|_A^2 + |u^1|_A^2 +\cblue{\tau \sum^N_{k=2}|\ell(t_k,\cdot)|_A^2 
%     +\vio{\tau \sum_{k=1}^N\big( \frac{(u^k_1-u^k_0)^2}{h^2} + \frac{(u^k_{I+1}-u^k_I)^2}{h^2}\big)}} 
%     \Big),
% \ee
where the constant $C$ depends only on $T$ and on the data but not on $\tau$ and $h$.
\end{rem}

The analysis of the controlled case is made complicated by the fact that even if 
the solution to (\ref{eq:HJ-1d}) is classical and the supremum is attained for each $x$ and $t$ (and similarly
for each $i$ and $k$ in (\ref{eq:BDF2})),
we cannot make any assumptions on the regularity of this optimal control as a function of $x$ and $t$
(or $i$ and $k$, respectively).

In certain circumstances, the previous bound holds with the $A$-norm replaced by the Euclidean norm. 
In particular, we consider the following assumption:

%\medskip\\
%\noindent
\medskip
{\sc Assumption (A3).} \ \ 
The diffusion coefficient is independent of the control, i.e.\ $\sigma\equiv\sigma(t,x)$ and there exists $L\geq 0$ such that 
$$
|\sigma^2(t,x)-\sigma^2(t,y)| \leq L |x-y| \quad\forall x,y\in\Omega, t\in [0,T].
$$
%\end{assumption}

\begin{theorem}\label{th:L2stab}
Assume (A1), (A2), (A3), as well as the CFL condition \eqref{eq:CFL}.
Then
there exists $C\geq 0$ (independent of $\tau$ and $h$) and  $\tau_0>0$ such that, for any $\tau\leq\tau_0$,
\be\label{eq:main-errorL2}
\max_{2\leq k\leq N}\|E^k\|^2 
   & \leq & C \Big(\|E^0\|^2 + \|E^1\|^2 +\tau\sum_{2\leq k\leq N} \|\Eps^k(v)\|^2\Big).
\ee
\end{theorem}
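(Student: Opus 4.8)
The plan is to derive an energy estimate for the error $E^k$ by testing a \emph{linearised} error equation against $E^k$ in the Euclidean inner product, and combining the $G$-stability of the BDF2 multiplier with the ellipticity (A2) and the Lipschitz regularity of the \emph{uncontrolled} diffusion (A3). First I would write the error equation: subtracting the definition \eqref{eq:consist_sol} of $\Eps^k(v)$ from the scheme \eqref{eq:BDF2} satisfied by $u^k$ gives
\[
  \frac{3E^k - 4E^{k-1} + E^{k-2}}{2\tau} + \big(H[u^k](t_k) - H[v^k](t_k)\big) = -\,\Eps^k(v).
\]
The nonlinear term is linearised by freezing the control: at each node $(k,i)$ the supremum defining $H$ is attained, and comparing the maximiser for $u^k$ with that for $v^k$ produces a (node-dependent) control field $a^k=(a^k_i)_i$ for which $H[u^k]_i - H[v^k]_i = (\mathcal L^{a^k}E^k)_i + r(t_k,x_i,a^k_i)E^k_i$, where $\mathcal L^{a^k}$ has the structure of $L^a$ with the control frozen at $a^k_i$. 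The crucial point -- and the reason (A3) is imposed -- is that under (A3) the second-order coefficient $\tfrac12\sigma^2(t_k,x_i)$ is the \emph{same} for every control, so although the frozen drift coefficients $b^\pm(t_k,x_i,a^k_i)$ may vary wildly (even discontinuously) from node to node, the frozen diffusion coefficient inherits the Lipschitz continuity of $\sigma^2$ in $x$.

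Next I would apply the BDF2 energy identity. Taking $\langle\,\cdot\,,E^k\rangle$ and using
\[
  2\big\langle 3E^k-4E^{k-1}+E^{k-2},\,E^k\big\rangle = \Phi^k - \Phi^{k-1} + \|E^k-2E^{k-1}+E^{k-2}\|^2,
\]
with the Lyapunov functional $\Phi^k:=\|E^k\|^2 + \|2E^k-E^{k-1}\|^2$, the left-hand side telescopes and the last nonnegative square is discarded, so that
\[
  \Phi^k-\Phi^{k-1} \leq -4\tau\,\big\langle H[u^k]-H[v^k],E^k\big\rangle - 4\tau\,\langle \Eps^k(v),E^k\rangle .
\]

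The heart of the argument is then a coercivity lower bound for $\langle \mathcal L^{a^k}E^k,E^k\rangle$. Recalling that the second-difference operator satisfies $-D^2E^k=AE^k$ and writing $\Sigma=\diag(\sigma^2(t_k,x_i))$, a discrete summation by parts (with $E^k$ vanishing at the boundary nodes) gives
\[
  h^2\langle \Sigma A E^k,E^k\rangle = \sum_i \sigma^2_i\,\delta_i^2 + \sum_i (\sigma^2_i-\sigma^2_{i-1})\,E^k_{i-1}\,\delta_i, \qquad \delta_i:=E^k_i-E^k_{i-1}.
\]
By (A2) the first sum is $\geq \eta\,h^2|E^k|_A^2$, while by (A3) the commutator term is $\leq L\,h^2\,\|E^k\|\,|E^k|_A$, so Young's inequality yields $\tfrac12\langle\Sigma A E^k,E^k\rangle\geq \tfrac{\eta}{4}|E^k|_A^2 - C\|E^k\|^2$. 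Using $3E^k_i-4E^k_{i-1}+E^k_{i-2}=3\delta_i-\delta_{i-1}$ and $\|b\|_\infty<\infty$, the drift part is bounded by $C\|b\|_\infty|E^k|_A\|E^k\|$, which Young's inequality absorbs into $\tfrac{\eta}{8}|E^k|_A^2+C\|E^k\|^2$; with $\langle r E^k,E^k\rangle\geq -\|r\|_\infty\|E^k\|^2$ this gives $\langle H[u^k]-H[v^k],E^k\rangle \geq -C\|E^k\|^2$. Combining with $-\langle\Eps^k(v),E^k\rangle\leq \tfrac12\|\Eps^k(v)\|^2+\tfrac12\|E^k\|^2$ and $\|E^k\|^2\leq\Phi^k$, I obtain the recursion $\Phi^k-\Phi^{k-1}\leq C\tau\Phi^k + C\tau\|\Eps^k(v)\|^2$. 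For $\tau$ small this is a discrete Gr\"onwall inequality, whose solution (summed from $k=2$, with $\Phi^1\leq C(\|E^0\|^2+\|E^1\|^2)$) yields \eqref{eq:main-errorL2}.

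I expect the main obstacle to be precisely the coercivity of the variable-coefficient, non-symmetric diffusion operator in the Euclidean norm: the summation-by-parts commutator term is controllable only because $\sigma^2$ is Lipschitz in $x$, and this is meaningful only because, by (A3), $\sigma^2$ is independent of the control. This is exactly where Theorem \ref{th:L2stab} must assume more than Theorem \ref{th:main}: in the $A$-norm estimate the diffusion may be controlled, whereas here the possibly irregular optimal control field would destroy the Lipschitz continuity of the frozen diffusion coefficient and hence the commutator estimate. A secondary technical point is the control selection reducing the $\sup$-difference to the linear operator $\mathcal L^{a^k}$ (an intermediate-value / measurable-selection argument exploiting compactness of $\mL$), and checking that the CFL condition \eqref{eq:CFL}, inherited from Theorem \ref{th:existence}, is needed only for solvability of the implicit scheme and does not enter the stability constants.
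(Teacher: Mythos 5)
Your proof is correct, and the core estimates coincide with the paper's: the paper also linearises the Hamiltonian (Lemma~\ref{lem:nonlin2lin}), tests the resulting error recursion \eqref{eq:err-recu-lin_gen} against $E^k$, performs exactly your summation by parts on the variable-coefficient diffusion to get $\langle E^k,\Delta^k A E^k\rangle \geq \frac{\eta}{2h^2}\|\delta E^k\|^2 - \frac{L}{2h}\|\delta E^k\|\,\|E^k\|$ (this is \eqref{eq:L2est-diff}), bounds the drift via $3E^k_i-4E^k_{i-1}+E^k_{i-2}=3\delta_i-\delta_{i-1}$ as in \eqref{eq:L2est-drift}, and absorbs everything with Young's inequality -- and you correctly identify that (A3) is needed precisely because the frozen/averaged diffusion coefficient must remain Lipschitz while the drift coefficients may be arbitrarily irregular in $(i,k)$. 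Where you genuinely diverge is the final stability mechanism. The paper does \emph{not} use the G-stability identity with the Lyapunov functional $\Phi^k=\|E^k\|^2+\|2E^k-E^{k-1}\|^2$: it keeps the weaker one-step inequality \eqref{eq:one_step}, which contains the non-telescoping combination $\frac12(3x_k-4x_{k-1}+x_{k-2})+(y_k-y_{k-1})$, and resolves it via the universal Lemma~\ref{lem:stability}, based on inverse-positivity and monotonicity of the banded matrix $M_\tau=(3-C\tau)I-4J+J^2$ in \eqref{eq:Mtau}. Your route is the classical energy argument (in the spirit of the cited works of Becker and Emmrich), is more self-contained since the exact identity makes the left-hand side telescope and a discrete Gr\"onwall inequality finishes the job; the paper's lemma is more cumbersome to prove but is formulated for an arbitrary vector norm and is reused verbatim for the $A$-norm result (Theorem~\ref{th:main}), for Proposition~\ref{prop:L2_1d_deg} and for the 2D case, which is what that machinery buys. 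Your concluding observations (that the CFL condition \eqref{eq:CFL} enters only through well-posedness, and that Theorem~\ref{th:main} can tolerate controlled diffusion while this theorem cannot) match the paper.

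One small inaccuracy to fix: your linearisation asserts $H[u^k]_i - H[v^k]_i = (\mathcal{L}^{a^k}E^k)_i + r(t_k,x_i,a^k_i)E^k_i$ for a \emph{single} control field $a^k_i\in\mL$. An intermediate-value argument in the control variable fails when $\mL$ is disconnected (e.g.\ $\mL=\{\sigma_1,\sigma_2\}$ as in Test~2), so such an $a^k_i$ need not exist. The paper's Lemma~\ref{lem:nonlin2lin} instead sandwiches the difference between the values at the two optimal controls $\bar a^k_i,\bar b^k_i$ and writes the coefficients as \emph{convex combinations} $\tilde\phi^k_i=\gamma^k_i\phi(t_k,x_i,\bar a^k_i)+(1-\gamma^k_i)\phi(t_k,x_i,\bar b^k_i)$, which need not correspond to any admissible control. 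This is harmless for your argument: convex combinations preserve the bounds $\|b\|_\infty$, $\|r\|_\infty$, and under (A3) the diffusion coefficient is $\sigma^2(t_k,x_i)$ regardless, so the Lipschitz commutator estimate is untouched -- but the statement should be phrased with the convex-combination coefficients rather than a frozen control.
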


As a consequence, %convergence results and 
error estimates will be obtained under the main assumptions (A1), (A2) and (A3) or under 
some specific assumptions, see Sections \ref{sec:L2} and  \ref{sec:error}.

The extension of the presented results to other type of nonlinear operators 
($\inf$, $\sup\inf$ or $\inf\sup$) 
%($\min$, $\max\min$ or $\min\max$) 
and corresponding equations will also be discussed.

\section{Proof of Theorem \ref{th:existence} {\bf (well-posedness of the scheme)}}\label{sec:scheme}
%------------------------------------------------------------
%In this section we prove Theorem~\ref{th:existence}.
%After multiplication by $\tau$, t
The scheme \eqref{eq:BDF2} at time $t_k$ (for $k\geq 2$) can be written in the following form: 
$$
 %\max_{a\in A} ( M^k_a U + \tau L^k_a ) = 0.
 {\sup}_{a\in \Lambda} ( M^k_a X - q^k_a ) = 0,
$$
where $q^k_a \in \R^I$ and $M^k_a \in \R^{I\times I}$ with the following non-zero entries:
%\begin{equation}
\be
 & & 
  (M^k_a)_{i,i}  :=   \frac{3}{2} + \tau \bigg\{2 \frac{\ms^2}{h^2} + \frac{3 b^+}{2 h} + \frac{3 b^-}{2h} + r \bigg\}
   \\
 & & 
  (M^k_a)_{i,i+1}  :=    \tau \bigg\{-  \frac{\ms^2}{h^2} - \frac{4 b^-}{2 h}  \bigg\},
 \quad
  (M^k_a)_{i,i-1}  :=    \tau \bigg\{-  \frac{\ms^2}{h^2} - \frac{4 b^+}{2 h}  \bigg\}
   \\
 & & 
  (M^k_a)_{i,i+2}  :=    \tau \frac{b^-}{2h}
 \quad
  (M^k_a)_{i,i-2}  :=    \tau \frac{b^+}{2h}
\ee
with $\ms\equiv \ms(t_k,x_i,a)$, $b^\pm\equiv b^\pm(t_k,x_i,a)$ and $r\equiv r(t_k,x_i,a)$.
%\end{equation}
For $k=1$, the terms are different but the form (and analysis) is similar.
The fact that $(M_a)_{i,i\pm 2}$ are non\-negative breaks the monotonicity of the scheme and makes the analysis more difficult.

We will use the following lemma, whose proof is given in appendix~\ref{app:A}:
\begin{lemma}\label{lem:exist}
Asssume that $\Lambda$ is some set, $(q_a)_{a\in \Lambda}$ is a family of vectors in $\R^I$, 
$(M_a)_{a\in \Lambda} $ is a family of matrices in $\R^{I\times I}$ such that:
\begin{itemize}
\item[$(i)$] 
for all $a\in \Lambda$,
$$ (M_a)_{ii}>0; $$
\item[$(ii)$] (a form of diagonal dominance)
\be \label{eq:lem:exist-2}
  \sup_{a\in \Lambda} \max_{i\in \mathbb{I}} \frac{\sum_{j>i} |(M_a)_{ij}|}{|(M_a)_{ii}| - \sum_{j<i} |(M_a)_{ij}|} < 1.
\ee

\end{itemize}
Then there exists a unique solution $X$ in $\R^n$ of 
\be\label{eq:lem-1}
  {\sup}_{a\in \Lambda} (M_a X - q_a) = 0.
\ee
\end{lemma}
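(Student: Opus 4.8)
The plan is to recast \eqref{eq:lem-1} as a fixed-point equation for a Gauss--Seidel type map and to show that this map is a contraction in the maximum norm, with contraction factor given precisely by the quantity appearing in \eqref{eq:lem:exist-2}. First I would split each $M_a = D_a + L_a + U_a$ into its diagonal, strictly lower, and strictly upper triangular parts. Given $Y\in\R^I$, I define $X = T(Y)$ by solving, successively for $i=1,\dots,I$, the scalar equation
$$
 \sup_{a\in \Lambda}\Big((M_a)_{ii}X_i + \sum_{j<i}(M_a)_{ij}X_j + \sum_{j>i}(M_a)_{ij}Y_j - (q_a)_i\Big) = 0,
$$
where the $X_j$ with $j<i$ have already been computed at earlier rows. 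As a function of the single unknown $X_i$, the left-hand side is a supremum of affine functions with strictly positive slopes $(M_a)_{ii}$ (by $(i)$), hence continuous, convex and strictly increasing, and therefore has a unique zero; so $T$ is well defined. The only point needing care here is that the supremum be finite and attained, which is guaranteed in our application where $\Lambda$ is compact and the entries depend continuously on $a$.

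Next I would establish the contraction estimate. Set $\gamma:=\sup_{a}\max_{i}\frac{\sum_{j>i}|(M_a)_{ij}|}{|(M_a)_{ii}|-\sum_{j<i}|(M_a)_{ij}|}<1$ by $(ii)$, and let $X=T(Y)$, $\tilde X=T(\tilde Y)$. I claim $\|X-\tilde X\|_\infty\le\gamma\|Y-\tilde Y\|_\infty$, proved by induction on $i$. Assuming $|X_j-\tilde X_j|\le\gamma\|Y-\tilde Y\|_\infty$ for $j<i$, and taking WLOG $X_i\ge\tilde X_i$, let $a^*$ attain the supremum in the equation defining $X_i$. Comparing with the equation for $\tilde X_i$, in which $a^*$ is admissible (using $\sup_a f_a\ge f_{a^*}$), yields
$$
 (M_{a^*})_{ii}(X_i-\tilde X_i)\ \le\ \sum_{j<i}|(M_{a^*})_{ij}|\,|X_j-\tilde X_j| + \sum_{j>i}|(M_{a^*})_{ij}|\,|Y_j-\tilde Y_j|.
$$
Bounding the first sum by $\gamma\|Y-\tilde Y\|_\infty\sum_{j<i}|(M_{a^*})_{ij}|$ and the second by $\|Y-\tilde Y\|_\infty\sum_{j>i}|(M_{a^*})_{ij}|$, and then invoking \eqref{eq:lem:exist-2} in the equivalent form $\sum_{j>i}|(M_a)_{ij}| + \gamma\sum_{j<i}|(M_a)_{ij}| \le \gamma\,(M_a)_{ii}$, the right-hand side is at most $\gamma\,(M_{a^*})_{ii}\|Y-\tilde Y\|_\infty$. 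Dividing by $(M_{a^*})_{ii}>0$ gives $|X_i-\tilde X_i|\le\gamma\|Y-\tilde Y\|_\infty$, closing the induction.

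Finally, by the Banach fixed-point theorem $T$ has a unique fixed point $X\in\R^I$. Taking $Y=X$ in the defining relation turns the $i$-th scalar equation into $\sup_{a}(M_aX-q_a)_i=0$, so a fixed point of $T$ is exactly a solution of \eqref{eq:lem-1}, and conversely by uniqueness of the row-wise solves; this delivers both existence and uniqueness. I expect the \emph{main obstacle} to be the contraction estimate in the genuinely nonlinear supremum setting: the maximizing control $a^*$ varies from row to row and from input to input, so one cannot peel off the diagonal by a linear splitting, and the estimate must be carried out per control with the optimal index $a^*$ kept inside the factor $(M_{a^*})_{ii}$. It is precisely this device that lets \eqref{eq:lem:exist-2} be applied without any uniform lower bound on the diagonal entries, using only the positivity $(i)$ in the final division. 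A secondary technical point, already flagged above, is to guarantee that the row-wise suprema are finite and attained so that $T$ is single-valued.
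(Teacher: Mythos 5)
Your proposal is, at its core, the same argument as the paper's appendix proof: both recast \eqref{eq:lem-1} as a fixed-point equation for a Gauss--Seidel-type map obtained by freezing the strictly upper triangular part, and both obtain a contraction in the $\|\cdot\|_\infty$-norm with factor controlled by the ratio in \eqref{eq:lem:exist-2}. The difference is only in packaging: the paper solves the lower-triangular problem $\sup_{a}(L_a X-(q_a-U_a c))=0$ by explicit forward substitution, $x_i=\inf_a\big(((v_a)_i-\sum_{k<i}(L_a)_{ik}x_k)/(L_a)_{ii}\big)$, and then quotes the classical Gauss--Seidel estimate $\|(L_a)^{-1}U_a\|_\infty\le\max_i\,\sum_{j>i}|(M_a)_{ij}|/\big(|(M_a)_{ii}|-\sum_{j<i}|(M_a)_{ij}|\big)$, whereas your row-wise induction with an optimal control per row supplies directly the ``elementary computations'' the paper leaves implicit. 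That makes your version more self-contained, at the cost of redoing a standard matrix estimate by hand.

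There is, however, one step that fails as literally written. Write $F_a(x)$ for the affine function inside the supremum at row $i$ built from $(X_j)_{j<i}$ and $(Y_j)_{j>i}$, and $\tilde F_a(x)$ for its counterpart built from $(\tilde X_j)_{j<i}$ and $(\tilde Y_j)_{j>i}$. Having assumed WLOG $X_i\ge\tilde X_i$, you take $a^*$ optimal in the equation defining $X_i$. What this gives is $F_{a^*}(X_i)=0$ and $\tilde F_{a^*}(\tilde X_i)\le\sup_a\tilde F_a(\tilde X_i)=0$, hence
$$
(M_{a^*})_{ii}\,(\tilde X_i-X_i)\ \le\ \sum_{j<i}|(M_{a^*})_{ij}|\,|X_j-\tilde X_j|+\sum_{j>i}|(M_{a^*})_{ij}|\,|Y_j-\tilde Y_j|,
$$
i.e.\ an upper bound on $\tilde X_i-X_i$, which is vacuous under your WLOG (the left side is $\le 0$); your displayed inequality for $X_i-\tilde X_i$ does not follow from this choice of $a^*$. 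The repair is immediate by the symmetry of the two roles: take $a^*$ optimal in the equation defining $\tilde X_i$ (the smaller root), so that $\tilde F_{a^*}(\tilde X_i)=0$ and $F_{a^*}(X_i)\le 0$, which yields exactly your displayed bound and closes the induction. Two further shared caveats, which you partly flag and the paper also leaves implicit: the rearranged form $\sum_{j>i}|(M_a)_{ij}|+\gamma\sum_{j<i}|(M_a)_{ij}|\le\gamma\,(M_a)_{ii}$ presumes the denominators in \eqref{eq:lem:exist-2} are positive (as the phrase ``diagonal dominance'' suggests, and as holds in the application via the CFL condition); and for a bare set $\Lambda$ the row-wise suprema must be finite and, for exact zeros, attained or handled by $\epsilon$-optimal controls --- the paper's $\inf_a$ formula needs the same, and compactness of $\Lambda$ with continuity in $a$, as in the application, suffices.
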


\begin{rem}
For a fixed $a\in \Lambda$, we have
$$ 
 \max_{i\in \mathbb{I}} \frac{\sum_{j>i} |(M_a)_{ij}|}{|(M_a)_{ii}| - \sum_{j<i} |(M_a)_{ij}|} < 1
 \quad 
   %, \quad \forall i=1,\dots,n
   \Leftrightarrow 
 \quad 
 \min_{i\in \mathbb{I}} \bigg( |(M_a)_{ii}| - \sum_{j\neq i} |(M_a)_{ij}| \bigg) >0.
$$
%\MODIFO
{
Moreover, if $\Lambda$ is compact and $a\conv M_a$ is continuous, then
\eqref{eq:lem:exist-2} is equivalent to 
$$
 \inf_{a\in \Lambda} \min_{i\in \mathbb{I}} \bigg( |(M_a)_{ii}| - \sum_{j\neq i} |(M_a)_{ij}| \bigg) >0.
$$
}
\end{rem}

\begin{proof}[Proof of Theorem~\ref{th:existence}]
We are going to prove properties $(i)$ and $(ii)$ in Lemma \ref{lem:exist}. Condition $(i)$ is immediately verified, 
\if{
Next, we estimate
\beno
  \sum_{j\neq i} |(M_a)_{ij}|
    & \leq &  \tau \bigg( 2 \frac{\ms^2}{h^2}  + \frac{4 b^+}{2h} + \frac{4 b^-}{2h} + \frac{1}{2h} (b^- + b^+)\bigg)  \\
    & \leq    & \tau \bigg( 2 \frac{\ms^2}{h^2}  + \frac{5 |b|}{2h} \bigg)
\eeno
(the inequality is strict only for boundary terms),
and 
\beno
  |(M_a)_{ii}|
   =  \frac{3}{2} + \tau \bigg\{2 \frac{\ms^2}{h^2} + \frac{3 |b|}{2 h} + r \bigg\}.
\eeno
Hence 
$$
  |(M_a)_{ii}| -  \sum_{j\neq i} |(M_a)_{ij}| \geq \frac{3}{2} - \frac{\tau \|b\|_\infty}{h}  =: \mu_a
$$ 
with $\mu_a>0$ as soon as the CFL condition \eqref{eq:CFL} is satisfied. Therefore $(ii)$ is verified.
}\fi
and we turn to proving  $(ii)$.
We have 
$$
  \mu_1 := \sum_{j>i} |(M_a)_{ij}| \leq 
     \tau \bigg( \frac{\ms_{i}^2}{h^2}  + \frac{5 b^-_{i}}{2h}\bigg)
$$
(omitting the dependency on $k$ and $a$ in $\ms,b^\pm,r$) and 
$$
  \mu_2 := |(M_a)_{ii}| - \sum_{j<i} |(M_a)_{ij}| \geq 
     \frac{3}{2} +  \tau \bigg( \frac{\ms_{i}^2}{h^2}  
       - \frac{2 b^+_{i}}{2h}  + \frac{3 b^-_{i}}{2h}   + r \bigg).
$$
By the CFL condition \eqref{eq:CFL}, there exists $\eps>0$ such that  
$\frac{\tau \|b\|_\infty}{h} \leq \frac{3}{2}- \eps$.
This implies %in particular
$$
     \frac{3}{2} -\frac{\eps}{2} + \tau \left(-\frac{2 b^+_{i}}{2h}  + \frac{3 b^-_{i}}{2h}\right)
      \geq  \frac{\eps}{2} +  \tau \frac{5b^-_{i}}{2h}
$$
and therefore 
$$ \mu_2 \geq  \bigg(\tau \frac{\ms_{i}^2}{h^2} + \frac{\eps}{2} +  \, \tau r\bigg)  
  +  \bigg(\tau \frac{5 b^-_{i}}{2h}  + \frac{\eps}{2}\bigg).
$$
Then by using %the fact that 
$\disp \frac{a_1+a_2}{c_1+c_2}\leq \max\Big(\frac{a_1}{c_1}, \frac{a_2}{c_2}\Big)$
for numbers $a_i,c_i\geq 0$, we obtain
$$ \frac{\mu_1}{\mu_2}  \leq \max\bigg(
  \frac{\tau \frac{\ms^2_i}{h^2}}{\tau \frac{\ms^2_i}{h^2} + \frac{\eps}{2}+\ \tau\,r },\
  \frac{\tau \frac{5 b^-_{i}}{2h}}{\tau \frac{5 b^-_{i}}{2h}  + \frac{\eps}{2}}\bigg).
$$
Taking $\tau$ small enough such that for instance
$\frac{\eps}{2} + \tau r \geq \frac{\eps}{4}$,  and since $b(.)$ and $\ms(.)$ are bounded
functions (by (A1)), we obtain the bound 
\beno
  \sup_{a\in A} \max_{i\in \mathbb{I}}
    \frac{\sum_{j>i} |(M_a)_{ij}|}{|(M_a)_{ii}| - \sum_{j<i} |(M_a)_{ij}|} 
    & \! \leq \! &  \max\bigg(
  \frac{\tau \frac{\|\ms^2\|_\infty}{h^2}}{\tau \frac{\|\ms^2\|_\infty}{h^2} 
     + \frac{\eps}{4}},\
  \frac{\tau \frac{5 \|b^-\|_\infty}{2h}}{\tau \frac{5 \|b^-\|_\infty}{2h}
     + \frac{\eps}{2}}\bigg) < 1.
\eeno

Since the last bound is a constant $<1$, 
we can apply Lemma~\ref{lem:exist}
to obtain the existence and uniqueness of the solution of the BDF2 scheme. 
\end{proof}
\noindent

\section{Proof of Theorem~\ref{th:main} {\bf (stability in the $A$-norm)} }\label{sec:3}
%In this section we prove Theorem~\ref{th:main}.
The proof consists of three main steps: first, we show a ``linear''
recursion for the error %can be proved to be satisfied 
(Lemma \ref{lem:nonlin2lin});
second, we pass from such a recursion for the error in vector form to a scalar recursion (Lemma \ref{lem:vect2Anorm});
finally, we show the stability estimate from this scalar recursion 
(Lemma \ref{lem:stability}).

\subsection{Treatment of the nonlinearity}\label{sec:vec2lin}
%------------------------------------------------------------

First, we have the following: % fundamental observation:
\begin{lemma}\label{lem:nonlin2lin}
Let $u$ be the solution of scheme \eqref{eq:BDF2} and $v$ % {\color{blue}{\cancel{\in C^{1,2}}}}$ 
the solution of equation \eqref{eq:HJ-1d}. 
There exist coefficients $\tilde\sigma^k_i$, $(\tilde b^\pm)^k_i$, $\tilde r^k_i$, such that the error $E^k=u^k-v^k$ satisfies %a linear vector recursion, 
% which is a regular point of $v$:
\begin{eqnarray}
 \nonumber
 \frac{3 E_i^k -4 E_i^{k-1} + E_i^{k-2} }{2\tau} 
   -\frac{1}{2}(\tilde\ms^2)^k_i D^2 E^{k}_i  + (\tilde b^+)^k_i D^{1,-} E^{k}_i -(\tilde b^-)^k_i D^{1,+} E^{k}_i 
   + \tilde r^k_i E^k_i = -\Eps_i^k \\
 %\frac{3 E^k -4 E^{k-1} + E^{k-2} }{2\tau} -\frac{1}{2}(\tilde{\sigma^{_2}})^k D^2 E^{k}  + (\tilde{b^{^{_+}}})^k D^{1,-} E^{k} -(\tilde{b^{^{_-}}})^k D^{1,+} E^{k} +\tilde r^k E^k = -\Eps^k.\\
 \label{eq:vecrec}
\end{eqnarray}
for any  $k\geq 2$ and $i\in\mathbb I$,
where $(\tilde \ms^2)^k_i$, $(\tilde b^\pm)^k_i$, $\tilde r^k_i$
belong, respectively, to the convex hulls $co(\ms^2(t_k,x_i,\Lambda))$, $co(b^\pm(t_k,x_i,\Lambda))$, $co(r(t_k,x_i,\Lambda)$.
\end{lemma}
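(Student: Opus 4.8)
The plan is to subtract the scheme satisfied by $u$ from the identity satisfied by the exact solution $v$, and then to linearise the difference of the two numerical Hamiltonians. Abbreviating $H[w]_i:=H[w](t_k,x_i)$, I would first write the scheme \eqref{eq:BDF2} for $u$ as $\frac{3u^k_i-4u^{k-1}_i+u^{k-2}_i}{2\tau}+H[u^k]_i=0$ and recall from \eqref{eq:consist_sol} that $v$ obeys $\frac{3v^k_i-4v^{k-1}_i+v^{k-2}_i}{2\tau}+H[v^k]_i=\Eps^k_i$ (with $\Eps^k_i=\Eps^k_i(v)$). Subtracting would give
\be
  \frac{3E^k_i-4E^{k-1}_i+E^{k-2}_i}{2\tau}+\big(H[u^k]_i-H[v^k]_i\big) & = & -\Eps^k_i. \nonumber
\ee
The time-difference term is already linear in $E$ and matches the first term of \eqref{eq:vecrec}, so everything reduces to representing the Hamiltonian difference $H[u^k]_i-H[v^k]_i$ as the operator $-\frac12(\tilde\sigma^2)^k_i D^2E^k_i+(\tilde b^+)^k_i D^{1,-}E^k_i-(\tilde b^-)^k_i D^{1,+}E^k_i+\tilde r^k_i E^k_i$ with coefficients in the prescribed convex hulls.

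For fixed $i,k$ I would introduce the affine functional $\Phi^a[w]_i:=L^a[w](t_k,x_i)+r(t_k,x_i,a)w_i+\ell(t_k,x_i,a)$, so that $H[w]_i=\sup_{a\in\Lambda}\Phi^a[w]_i$. Since $\Lambda$ is compact and $a\mapsto\Phi^a[w]_i$ is continuous (the finite differences of $w$ being fixed numbers and $\sigma,b,r,\ell$ continuous in $a$), the supremum is attained; let $\bar a$ and $\underline a$ be maximisers for $w=u^k$ and $w=v^k$, respectively. Using $H[v^k]_i\geq\Phi^{\bar a}[v^k]_i$ and $H[u^k]_i\geq\Phi^{\underline a}[u^k]_i$ yields the two-sided bound
\be
  \Phi^{\underline a}[u^k]_i-\Phi^{\underline a}[v^k]_i & \leq & H[u^k]_i-H[v^k]_i \ \leq\ \Phi^{\bar a}[u^k]_i-\Phi^{\bar a}[v^k]_i. \nonumber
\ee
Because the $\ell$-terms cancel and $L^a$ is linear in its argument, for every fixed $a$ one has $\Phi^a[u^k]_i-\Phi^a[v^k]_i=L^a[E^k](t_k,x_i)+r(t_k,x_i,a)E^k_i$, a genuinely linear expression in $E^k$.

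The two bounding quantities are therefore real numbers of that linear form with $a\in\{\bar a,\underline a\}$, and $H[u^k]_i-H[v^k]_i$ lies between them. By the intermediate value theorem there is $\lambda\in[0,1]$ with
\be
  H[u^k]_i-H[v^k]_i & = & \lambda\big(L^{\bar a}[E^k]_i+r(t_k,x_i,\bar a)E^k_i\big)+(1-\lambda)\big(L^{\underline a}[E^k]_i+r(t_k,x_i,\underline a)E^k_i\big). \nonumber
\ee
Collecting the coefficients of $D^2E^k_i$, $D^{1,-}E^k_i$, $D^{1,+}E^k_i$ and $E^k_i$ then defines $(\tilde\sigma^2)^k_i:=\lambda\sigma^2(t_k,x_i,\bar a)+(1-\lambda)\sigma^2(t_k,x_i,\underline a)$, and analogously $(\tilde b^\pm)^k_i$ and $\tilde r^k_i$; each is a convex combination of two values of the respective coefficient and hence lies in $co(\sigma^2(t_k,x_i,\Lambda))$, $co(b^\pm(t_k,x_i,\Lambda))$, $co(r(t_k,x_i,\Lambda))$. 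Substituting this representation into the subtracted identity reproduces exactly \eqref{eq:vecrec}.

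The main obstacle is precisely this treatment of the nonlinearity: a difference of two suprema is not itself a supremum, and I need an \emph{exact} linear representation with controlled coefficients, not merely a Lipschitz bound. What makes it work is that the maximised functional is affine in the unknown (so the source terms $\ell$ cancel in the error and the per-control difference is linear in $E^k$), together with the fact that the lemma asks for each of $\sigma^2,b^\pm,r$ to lie in its \emph{own} convex hull, which is automatic once a single joint convex combination of the two control values is used. The one point requiring care is the attainment of the suprema: should continuity of the data in $a$ not be assumed, I would instead take $\varepsilon$-maximisers and pass to the limit, using that the bounded coefficient ranges have compact convex hulls in $\R$ so the representation survives the limit.
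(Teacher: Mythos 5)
Your proposal is correct and follows essentially the same route as the paper: subtract the consistency identity \eqref{eq:consist_sol} from the scheme, bound $H[u^k]_i-H[v^k]_i$ from above by the per-control difference at a maximiser for $u^k$ and from below at a maximiser for $v^k$ (the paper's $\bar a^k_i$ and $\bar b^k_i$), and write the sandwiched quantity as a single convex combination of the two linear expressions in $E^k$, with the paper likewise relegating non-attained suprema to an $\varepsilon$-optimal-control limit in a footnote. Your explicit remarks — that the $\ell$-terms cancel so each per-control difference is genuinely linear in $E^k$, and that one joint $\lambda$ automatically places $\tilde\sigma^2$, $\tilde b^\pm$, $\tilde r$ in their respective convex hulls — merely make precise what the paper states in \eqref{eq:Hgen}.
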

\begin{proof}
By definition of the consistency error \eqref{eq:consist_sol}, %\eqref{eq:consist}, 
%in all points $(t_k,x_i)$ of regularity of the solution $v$ of equation \eqref{eq:HJB}, the equation is satisfied in a classical
%sense and therefore 
one has (for $k\geq 2$, $1\leq i\leq I$)
\be\label{eq:vBDF}
  %\frac{3 v(t_k,x_i) -4 v(t_{k-1},x_i) +v(t_{k-2},x_i)}{2\tau} +  H(t_k,x_i,[v])  = \Eps^k_i
  %\frac{3 v^k_i -4 v^{k-1}_{i} + v^{k-2}_i}{2\tau} +  H(t_k,x_i,[v^k])  = \Eps^k_i.
  \frac{3 v^k_i -4 v^{k-1}_{i} + v^{k-2}_i}{2\tau} +  H[v^k](t_k,x_i)  = \Eps^k_i.
\ee
%and will also denote $H[u]_i:=H[u](t_k,x_i)$ when there is no abiguity.
%for any $k\geq 2$ and $i\in\mathbb I$. 
The scheme simply reads
\be\label{eq:uBDF}
  %\frac{3 v(t_k,x_i) -4 v(t_{k-1},x_i) +v(t_{k-2},x_i)}{2\tau} +  H(t_k,x_i,[v])  = \Eps^k_i
  %\frac{3 u^k_i -4 u^{k-1}_{i} + u^{k-2}_i}{2\tau} +  H(t_k,x_i,[u^k])  =  0.
  \frac{3 u^k_i -4 u^{k-1}_{i} + u^{k-2}_i}{2\tau} +  H[u^k](t_k,x_i)  =  0.
\ee
Subtracting \eqref{eq:vBDF} from \eqref{eq:uBDF}, denoting also
$H[u^k]\equiv (H[u^k](t_k,x_i))_{1\leq i\leq I}$, 
the following recursion is obtained for the error in $\R^I$:
\be\label{eq:err-recu}
  \frac{3 E^k -4 E^{k-1} + E^{k-2} }{2\tau} +  H[u^k] - H[v^k]  = -\Eps^k.
\ee

For simplicity of presentation, we first consider the case when $b$ and $r$ vanish, i.e. $
  \mbox{$b(.)\equiv 0$ and $r(.)\equiv 0$.}
$
In this case, 
\be\label{eq:Huk}
 H[u^k]_i = \sup_{a\in \mL} \Big\{- \frac{1}{2} \ms^2(t_k,x_i,a) (D^2 u^k)_i  + \ell(t_k,x_i,a)\Big\}.
\ee
To simplify the presentation, we will assume that $\ms$ and $\ell$ are continuous functions of $a$ so that the supremum is attained.\footnote{The general case is obtained easily by considering sequences of $\epsilon$-optimal controls and letting $\epsilon\rightarrow 0$, such that (\ref{eq:Hgen}) below still holds for a suitably defined 
$\tilde{\sigma}^{_2}$, $\tilde{b}^{^{_+}}$, $\tilde{b}^{^{_-}}$, $\tilde r$.}
For each given $k,i$, let then $\bar a^k_i \in \mL$ denote an optimal control in \eqref{eq:Huk}.
%and the rest of the proof is identical.
\\
%\vio{AP: If we leave the previous sentence we should check that the arguments work even taking minimizing/maximizing sequences..} \\
In the same way, let ${\bar b}^k_i$ denote an optimal control for $H[v^k]_i$. By using the optimality of $\bar a^k_i$, it holds
\begin{align}
& H[u^k]_i  - H[v^k]_i \nonumber\\
& =\! - \frac{1}{2} \ms^2(t_k,x_i, \bar a^k_i)  (D^2 u^k)_i \! + \ell(t_{k},x_i,\bar a^k_i) - \sup_{a\in\Lambda}\Big\{\!\! -\! \frac{1}{2} \ms^2(t_k,x_i, a)  (D^2 v^k)_i \! + \ell(t_{k},x_i,a)\!\Big\}  \nonumber \\
& \leq\! - \frac{1}{2} \ms^2(t_k,x_i, \bar a^k_i)  (D^2 u^k)_i 
         - \Big(- \frac{1}{2} \ms^2(t_k,x_i, \bar a^k_i)  (D^2 v^k)_i  \Big)  \nonumber \\
  &  =\! - \frac{1}{2} \ms^2(t_k,x_i, \bar a^k_i) (D^2 E^k)_i \label{eq:err01}
\end{align}
and, in the same way, 
\be
  H[u^k]_i  - H[v^k]_i \geq - \frac{1}{2} \ms^2(t_k,x_i,\bar b^k_i) (D^2 E^k)_i.
  \label{eq:err02}
\ee
Therefore, combining \eqref{eq:err01} and \eqref{eq:err02}, %we see that 
$H[u^k]_i  - H[v^k]_i$ is a convex combination of $-\frac{1}{2} \ms^2(t_k,x_i,\bar a^k_i) (D^2 E^k)_i$ and 
 $-\frac{1}{2} \ms^2(t_k,x_i,\bar b^k_i) (D^2 E^k)_i$. 
In particular, we can write
\be\label{eq:Hdiff}
  H[u^k]_i  - H[v^k]_i = -\frac{1}{2} {\tilde\ms}^2 (t_{k},x_i) (D^2 E^k)_i,
\ee
where $\tilde\ms^{2}(t_{k},x_i)$ is a convex combination of $\ms^2(t_k,x_i,\bar a^k_i)$ and $\ms^2(t_k,x_i,\bar b^k_i)$.\\
%Obviously, 
In the general case (i.e. $b,r\not\equiv 0$)
one gets similarly
\be\label{eq:Hgen}
  H[u^k]_i  - H[v^k]_i = -\frac{1}{2}(\tilde\ms^2)^k_i D^2 E^{k}_i  + ({\tilde b}^+)^k_i D^{1,-} E^{k}_i 
    - (\tilde b^-)^k_i D^{1,+} E^{k}_i +\tilde r^k_i E^k_i,
\ee
where, for $\phi=\sigma^2,b,r$,
$$
\tilde\phi^k_i := \gamma^k_i \phi(t_{k},x_i,\bar a^k_i) +(1-\gamma^k_i) \phi(t_{k},x_i,\bar b^k_i)
$$
for some $\gamma^k_i\in[0,1]$. 
\end{proof}

%\begin{rem}
%Notice that Lemma \ref{lem:nonlin2lin} holds in any dimension.
%\end{rem}
\subsection{Isaacs equations}\label{sec:3.4}
%------------------------------------

The same technique used above to deal with the nonlinear operator applies also to Isaacs equations, i.e.\ equations of the following form:
\be%\label{eq:HJB}
v_t +\sup_{a\in \Lambda_1}\inf_{b\in\Lambda_2}\Big\{-\mathcal L^{(a,b)}[v](t,x) +r(t,x,a,b) v +\ell(t,x,a,b)\Big\}=0,
\ee
where $(t,x)\in [0,T]\times\R^d$, $\Lambda_1,\Lambda_2\subset \R^m$ are compact sets and 
$$
\mathcal L^{(a,b)}[v](t,x)=\frac{1}{2}\ms^2(t,x,a,b)v_{xx} + b(t,x,a,b)v_x.
$$

%\centerline{ \cblue{\fbox{This will need precise assumptions and statement in the end ?}}\vio{AP: for me it ok to leave as it is}}

To simplify the presentation, let us consider again $b,r \equiv 0$, and now also $\ell \equiv 0$. By analogous definitions and reasoning to above, we get \eqref{eq:err-recu}, where, for $\phi=u,v$,
%\beno
%\frac{ 3 E^{k}  - 4 E^{k-1} + E^{k-2}}{2\tau} + H[u^k]-H[v^k]=-\Eps^k, \\
%\eeno 
%where 
%\be\label{eq:HIsaacs}
%H[u^k]_i = \sup_{a\in \Lambda_1}\inf_{b\in\Lambda_2}\Big\{-\frac{1}{2}\ms^2(t,x,a,b)(D^2_x u^k)_i\Big\}.
%\ee
\be\label{eq:HIsaacs}
H[\phi^k]_i = \sup_{a\in \Lambda_1}\inf_{b\in\Lambda_2}\Big\{-\frac{1}{2}\ms^2(t,x,a,b)(D^2_x \phi^k)_i\Big\}.
\ee
Let  $(\bar a^k_i,\bar b^k_i)\in\Lambda_1\times\Lambda_2$ denote an optimal control in  \eqref{eq:HIsaacs}.\footnote{Or, if not attained,
use an approximation argument.}
%{\color{blue} \fbox{Assume existence!}}
One has  
$$
H[u^k]_i = \sup_{a\in \Lambda_1}\Big\{-\frac{1}{2}\ms^2(t,x,a,\bar b^k_i)(D^2_x u^k)_i\Big\}=\inf_{b\in \Lambda_2}\Big\{-\frac{1}{2}\ms^2(t,x,\bar a^k_i,b)(D^2_x v^k)_i\Big\}.
$$
Therefore 
\begin{align}
& H[u^k]_i  - H[v^k]_i \nonumber\\
& =\ \sup_{a\in \Lambda_1}\Big\{-\frac{1}{2}\ms^2(t,x,a,\bar b^k_i)(D^2_x u^k)_i\Big\} - \sup_{a\in\Lambda_1}\inf_{b\in\Lambda_2}\Big\{ - \frac{1}{2} \ms^2(t_k,x_i, a,b)  (D^2 v^k)_i \Big\}  \nonumber \\
& \geq\ \sup_{a\in \Lambda_1}\Big\{-\frac{1}{2}\ms^2(t,x,a,\bar b^k_i)(D^2_x u^k)_i\Big\} - \sup_{a\in\Lambda_1}\Big\{ - \frac{1}{2} \ms^2(t_k,x_i, a,\bar b^k_i)  (D^2 v^k)_i \Big\}  \nonumber\\
&   \geq \inf_{a\in \Lambda_1}\Big\{-\frac{1}{2}\ms^2(t,x,a,\bar b^k_i)(D^2_x E^k)_i\Big\}. \label{eq:IsaacsInf}
\end{align}
Analogously, one can prove
\begin{align}\label{eq:IsaacsSup}
 H[u^k]_i  - H[v^k]_i \leq \sup_{b\in \Lambda_2}\Big\{-\frac{1}{2}\ms^2(t,x,\bar a^k_i,b)(D^2_x E^k)_i\Big\}
\end{align}
(here, we also use $\inf(a)-\inf(b)\leq \sup(a-b)$ and $\sup(a)-\sup(b)\geq \inf(a-b)$).
At this point, it is sufficient to take for $\hat a^k_i\in\Lambda_1$ and $\hat b^k_i\in\Lambda_2$ optimal controls in \eqref{eq:IsaacsInf} and \eqref{eq:IsaacsSup}, respectively, to be able to write $H[u^k]_i  - H[v^k]_i$ as a convex combination
of $-\frac{1}{2}\ms^2(t,x,\hat a^k_i,\bar b^k_i)(D^2_x E^k)_i$ and $-\frac{1}{2}\ms^2(t,x,\bar a^k_i,\hat b^k_i)(D^2_x E^k)_i$.

From this, an equation exactly as in \eqref{eq:vecrec} can be derived, with a suitable convex combination $(\tilde{\sigma^{_2}})_i^k$ of diffusion coefficients, and similar for the drift and other terms.

\subsection{A scalar error recursion}\label{sec:vect2scalar}
%-------------------------------------------------
From \eqref{eq:vecrec}, we can derive the following:
\begin{lemma}\label{lem:vect2Anorm}
Let assumptions (A1) and (A2) in Theorem \ref{th:main} be satisfied. 
Then there exists a constant $C\geq 0$ such that
\be 
  & &
  \hspace{-1.0cm}
  \frac{1}{2} \Big( (3-C\tau)|E^k|_A^2 -4| E^{k-1}|_A^2 +|E^{k-2}|_A^2\Big)
    +|E^k-E^{k-1}|_A^2-|E^{k-1}-E^{k-2}|_A^2
   \nonumber \\
  & & \hspace{8.3cm} \leq 2 \tau |E^k|_A\;|\Eps^k|_A.  \label{eq:one_step1}
\ee
\end{lemma}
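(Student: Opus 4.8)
The plan is to run a discrete energy (G-stability) argument on the error recursion \eqref{eq:vecrec}, in the spirit of \cite{becker1998second,Emmrich05}. Concretely, I would multiply \eqref{eq:vecrec} by $2\tau$, write it in vector form, and pair it with $E^k$ in the inner product $\<\cdot,\cdot\>_A=\<\cdot,A\,\cdot\>$ (equivalently, pair the vector identity with $AE^k$ in the Euclidean product). The algebraic backbone is the BDF2 identity
\[
 \<3E^k-4E^{k-1}+E^{k-2},\,E^k\>_A \;=\; \mathcal Q^k \;+\; \tfrac12\,|E^k-2E^{k-1}+E^{k-2}|_A^2,
\]
where $\mathcal Q^k:=\tfrac12\big(3|E^k|_A^2-4|E^{k-1}|_A^2+|E^{k-2}|_A^2\big)+|E^k-E^{k-1}|_A^2-|E^{k-1}-E^{k-2}|_A^2$ is exactly the leading part of the left-hand side of \eqref{eq:one_step1}; this is checked by expanding every inner product through $\<x,y\>_A=\tfrac12(|x|_A^2+|y|_A^2-|x-y|_A^2)$. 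Since the extra term $\tfrac12|E^k-2E^{k-1}+E^{k-2}|_A^2$ is nonnegative, it can be discarded in the favourable direction.

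It then remains to control the spatial and source contributions. For the diffusion term I would use the identity $D^2E^k=-AE^k$, so that $-\tau\<(\tilde\ms^2)^k\odot D^2E^k,E^k\>_A=\tau\sum_i(\tilde\ms^2)^k_i\,(AE^k)_i^2\ge\eta\tau\,\|AE^k\|^2$ by (A2); this coercive term is the engine of the estimate and is retained. For the first-order terms, Cauchy--Schwarz and (A1) give $2\tau\,|\<(\tilde b^+)^k\odot D^{1,-}E^k-(\tilde b^-)^k\odot D^{1,+}E^k,\,AE^k\>|\le 2\tau\|b\|_\infty(\|D^{1,-}E^k\|+\|D^{1,+}E^k\|)\,\|AE^k\|$, which upon using the elementary bound $\|D^{1,\pm}E^k\|\le C|E^k|_A$ (rewrite the BDF stencil in terms of the simple differences $(E_i-E_{i-1})/h$ whose squares sum to $|E^k|_A^2$) and Young's inequality becomes $\le\tfrac{\eta\tau}{2}\|AE^k\|^2+C\tau|E^k|_A^2$. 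Likewise the reaction term is bounded by $2\tau\|r\|_\infty\|E^k\|\,\|AE^k\|\le\tfrac{\eta\tau}{2}\|AE^k\|^2+C\tau|E^k|_A^2$ after the discrete Poincar\'e inequality $\|E^k\|\le C|E^k|_A$ (which holds since $E^k$ vanishes at the boundary nodes). The two $\tfrac{\eta\tau}{2}\|AE^k\|^2$ contributions are absorbed by the coercive diffusion term, and the residual $C\tau|E^k|_A^2$ is precisely what yields the factor $(3-C\tau)$ in \eqref{eq:one_step1}.

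Finally the source is estimated directly, $2\tau\,|\<\Eps^k,E^k\>_A|\le 2\tau|E^k|_A\,|\Eps^k|_A$, producing the right-hand side of \eqref{eq:one_step1}; collecting $\mathcal Q^k$, the discarded nonnegative square, the absorbed coercivity, and the residual gives the claim after renaming the constant. I expect the first-order (drift) term to be the main obstacle: the coefficients $(\tilde\ms^2)^k_i,(\tilde b^\pm)^k_i,\tilde r^k_i$ are genuinely $i$-dependent convex combinations, so no Fourier/eigenbasis diagonalisation is available, and one must rely on the strict ellipticity (A2) to manufacture enough coercivity $\eta\tau\|AE^k\|^2$ to dominate the cross terms $\|D^{1,\pm}E^k\|\,\|AE^k\|$. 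Balancing the Young constants against $\eta$, together with the boundary bookkeeping hidden in the convention $x_0=x_{I+1}=0$ of \eqref{eq:Anorm} and in the one-sided stencils $D^{1,\pm}$, is where the genuine care is required.
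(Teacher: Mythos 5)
Your proposal is correct and follows essentially the same route as the paper: pair the linearized error recursion with $AE^k$, use the three-term BDF2 energy estimate for the time difference (your exact G-stability identity with the discarded nonnegative term $\tfrac12|E^k-2E^{k-1}+E^{k-2}|_A^2$ is equivalent to the paper's inequality obtained from $2\<a-b,a\>_A=|a|_A^2+|a-b|_A^2-|b|_A^2$), extract coercivity $\tfrac{\eta}{2}\|AE^k\|^2$ from (A2) via $D^2E^k=-AE^k$, bound the drift and reaction terms by $C\|AE^k\|\,|E^k|_A$ using $\|\delta E^k\|,\|\delta^2E^k\|\le h|E^k|_A$ and $\|E^k\|\le \tfrac12|E^k|_A$, and absorb with Young's inequality. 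The only (immaterial) differences are that you treat both signs of $b$ explicitly where the paper assumes $b\ge 0$ for presentation, and your identification of the hard point (the $i$-dependent coefficients ruling out diagonalisation, forcing reliance on (A2)) matches the paper's reasoning.
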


\begin{proof}
%Let us start again by considering the case $b,r\equiv 0$.
%and let $ \<x,y\>_A:=\<x,Ay\>$. Notice that $|x|_A^2\equiv \<x,Ax\>$ (with $|\cdot|_A$ defined in (\ref{eq:Anorm}))
%\COMMENT{\vio{\\ \fbox{AP:} this is true with zero B.C., in general:
%$$
%h^2 \<x, Ay\> = \sum^{I+1}_{i=1} (x_i-x_{i-1})(y_i-y_{i-1}) - (x_1-x_0)(y_1-y_0)-(x_{I+1}-x_I)(y_{I+1}-y_I)+x_1y_1+x_Iy_I.
%$$
%}
%}
%by straightforward calculations.
For simplicity of presentation we will assume that $b$ has constant positive sign. The terms coming from the negative part of $b$ can 
be treated in a similar way.
\\
We remark that for $E\in \R^I$,
$ - D^2 E = A E,$
where $A$ is the finite difference matrix defined in \eqref{eqdef:A}.
By \eqref{eq:vecrec}, we get the following: % vector recursion:
\be\label{eq:err-recu-lin_gen}
  \frac{3 E^k -4 E^{k-1} + E^{k-2} }{2\tau} +  \Delta^k A E^k + F^k B E^k + R^k E^k = -\Eps^k,
\ee
where $\Delta^k:=\frac{1}{2} \diag( (\tilde\ms^2)^k_i),\
F^k  =  \diag(\tilde b^k_i),\ R_k  =  \diag(\tilde r^k_i)$ and
\begin{align*}
B  = \frac{1}{2h}
  \begin{pmatrix} 
    \phantom{-}3 & \phantom{-}0  &     &        &  \\
    -4  &  \phantom{-}3  &  0 &   &  \\
     \phantom{-}1 &  -4  & \ddots & \ddots &   &   \\
    \phantom{-}0 &\ddots& \ddots & \ddots&  0  \\
     \ddots  &   \ddots  &   1    &  -4   &  3   
\end{pmatrix}.
\end{align*}
We form the scalar product of \eqref{eq:err-recu-lin_gen} with $AE^k$. 
%On the other hand we notice that, by elementary calculus, 
%$$
%  \<A X,X\> \geq \ml_{min}(A)\<X,X\> \geq \frac{4}{h^2} \sin^2(\frac{\pi}{2} h) \|X\|^2 \geq  4 \| X\|^2,
%$$
%and, in the same way, $\<A X, A X\> \geq \ml_{\min(A)}^2 \|X\|^2\geq 16 \|X\|^2$, so that $\|AX\| \geq 4 \|X\|$.
%Hence 
%\be\label{eq:estR-2}
%  |\langle R^k E^k, A E^k\rangle| \leq \frac{\|r\|_\infty}{4}  \|A E^k\|^2.
%\ee
%}
%Hence the following equality holds in $\R^I$:
%$
%  H[u^k]  - H[v^k] = \Delta^k  A E^k,
%$
%and therefore, from \eqref{eq:Hdiff}, one has the following vector recursion:
%\be\label{eq:err-recu-lin}
%  \frac{3 E^k -4 E^{k-1} + E^{k-2} }{2\tau} +  \Delta^k A E^k  = -\Eps^k.
%\ee
%Taking the scalar product of \eqref{eq:err-recu-lin} with $A E^k$, and using that 
%\be\label{eq:b2}
%$$
%\<\Delta^k A E^k,\ A E^k\> \geq 0,
%$$
%since $\Delta^k$ has positive coefficients,
%\ee   
%the following inequality holds:
%$$
%  \< \frac{3 E^k - 4 E^{k-1} + E^{k-2}}{2\tau}, \ E^k\>_A  + \<H[u^k]-H[v^k],\ A E^k \> = -\<\Eps^k,\ E^k\>_A.
%$$
%Using \eqref{eq:b2} this gives
%We can then apply techniques that have already been used for proving stability properties of the BDF2 scheme \cblue{\fbox{Reference?}}.
By using the identity
$
  2 \langle a-b,a\rangle_A=|a|_A^2+|a-b|_A^2-|b|_A^2,
$
one has:
\be
  & & \<3 E^k -4 E^{k-1} + E^{k-2}, E^k\>_A \nonumber\\
  & & \ =  4\<E^k-E^{k-1},E^k\>_A - \<E^{k}-E^{k-2},E^k\>_A \nonumber \\
  & & \ =  \frac{1}{2} \left(4 |E^k|_A^2 + 4 |E^k-E^{k-1}|_A^2 - 4|E^{k-1}|_A^2\right) 
     - \frac{1}{2}\left(|E^{k}|_A^2 + |E^{k}-E^{k-2}|_A^2 -|E^{k-2}|_A^2\right) \nonumber\\
% & & \ =  \frac{1}{2} \left( 3 |E^k|_A^2  - 4|E^{k-1}|_A^2 + |E^{k-2}|_A^2 + 4 |E^k-E^{k-1}|_A^2 - |E^{k}-E^{k-2}|_A^2 \right)
%   \text{\color{red} Remove line(?)}\nonumber\\
  & & \ \geq \frac{1}{2} \left( 3 |E^k|_A^2  - 4|E^{k-1}|_A^2 + |E^{k-2}|_A^2\right) + |E^k-E^{k-1}|_A^2 - |E^{k-1}-E^{k-2}|_A^2, \label{eq:time-est}
\ee
where we have also used $|a+b|^2\leq 2|a|^2 + 2|b|^2$. 
%Using the fact that 
From $(\sigma^2)^k_i\geq \eta >0$ for all $k,i$:
%one has
\be\label{eq:est_diff}
  \langle \Delta^k A E^k, A E^k\rangle \geq \frac{\eta}{2} \| A E^k \|^2,
\ee
where $\|\cdot\|$ denotes the canonical Euclidean norm in $\R^I$.
% Defining 
% $$
%   x_k:= |E^k|_A^2\qquad \text{and}\qquad y_k:=|E^{k}-E^{k-1}|_A^2
% $$
% one has 
% %and using that $|a-b|_A^2\leq 2(|a|_A^2+|b|_A^2)$ one has 
% \begin{align*}
% (I) & = 2\left(x_k + y_{k} -x_{k-1}\right) -  \frac{1}{2}\left(x_k + \|E^{k}-E^{k-2}\|^2 -x_{k-2}\right)\\
%     & \geq 2\left(x_k + y_{k} -x_{k-1}\right) -  \frac{1}{2}\left(x_k + 2\|E_{k}-E_{k-1}\|^2+2 \|E^{k-1}-E^{k-2}\|^2 -x_{k-2}\right)\\
%     & = 2\left(x_k + y_{k} -x_{k-1}\right) -  \frac{1}{2}\left(x_k + 2y_{k}+2 y_{k-1} -x_{k-2}\right)\\
%     & = \frac{3}{2}x_k-2x_{k-1}+\frac{1}{2}x_{k-2} +y_{k}-y_{k-1}.
% \end{align*}

%Now let us come back to the more general case when drift and growth terms ($b(.)$ and $r(.)$) are  present. 

%It remains to estimate the drift component.
%\begin{align*}

In order to estimate the drift component, let us introduce the notation
\begin{equation}\label{eqdef:deltaE}
  \delta E:=(E_i-E_{i-1})_{1\leq i\leq {I}}
\end{equation}
%as well as $|e|:=(|e_i|)_{1\leq i\leq n}$,
with the convention that $E_i=0$ for all indices $i$ which 
are not in ${\mathbb{I}}$. It holds:
\beno
| \<F^k B E^k, A E^k\> |
  & = & \bigg| \frac{1}{2h}\langle F^k (3E^k_i - 4 E^k_{i-1} +E^k_{i-2})_{i\in\mathbb I}, A E^k\rangle \bigg| \\
  %& = & \frac{1}{2h}\langle F^k \big(3(e^k_i - e^k_{i-1}) -(e^k_{i-1}-e^k_{i-2})\big)_{i\in\mathbb I} , A E^k\rangle \\
  & = & \bigg| \frac{1}{2h}\langle F^k \big(3 \delta E^k -  \delta^2 E^k\big),\ A E^k\> \bigg| \\
  %& = & \frac{1}{2h}\langle F_k (3 E^{k}_1 - E^{k}_2), AE^k\rangle\\
  %& \geq & - \frac{1}{2h}\Big\{3\langle|F_k  E^k_{1}|,|A E^k|\rangle + \langle|F_k  E^k_{2}|,|A E^k|\rangle\Big\}
  & \leq &  \frac{1}{2h}\Big\{3\| F^k  \delta E^k \|\, \| A E^k \| + \|F^k  \delta^2 E^k\|\, \|A E^k\| \Big\}.
%\end{align*}
\eeno
%where we denoted 
%$$
%%(E^k_{1})_i = (e^k_i- e^k_{i-1}),\quad (E^k_{2})_i = (e^k_{i-1}- e^k_{i-2})\quad\text{and}\quad (|E^k|)_i = |e^k_i|.
%$$
By using the boundedness of the drift term, and %the fact that
$\|\delta E^k\|,\|\delta^2 E^k\| \leq h |E^k|_A$,
%one gets
%where we have also used the fact that $\|\delta E^k\|,\|\delta^2 E^k\| \leq h |E^k|_A$.
\be
 | \<F^k B E^k, A E^k\> |
 %&  \geq - \frac{\|b\|_\infty}{2h}\Big\{3\langle|E^k_{1}|,|A E^k|\rangle + \langle|E^k_{2}|,|A E^k|\rangle\Big\}\nonumber\\
  & \leq & \frac{\|b\|_\infty}{2h}\Big\{3\|A E^k\|\| \delta E^k\| + \|A E^k\|\|  \delta^2 E^k\|\Big\}\nonumber\\
  & \leq &   2 \|b\|_\infty\ \|A E^k\|\, |E^k|_A. \label{eq:est_drift}
\ee
\COMMENT{
\vio{\fbox{AP:} without zero B.C. and defining $|x|_A^2=(x,Ax)$ we would have
\begin{align*}
\|\delta e\|^2 & = h^2\<e, Ae\>+(e_1-e_0)^2-e_1^2-e_I^2\\
\|\delta e^2\|^2 & = h^2\<e, Ae\>+2(e_1-e_0)^2-(e_I-e_{I-1})^2-e_1^2-e_I^2
\end{align*}
}
}
%where we have also used the fact that $\|\delta E^k\|,\|\delta^2 E^k\| \leq h |E^k|_A$.
% Then it also holds $\<A X, A X\> \leq \ml_{max}(A) \<X, A X\>$, hence $\|A X\| \leq \frac{2}{h} |X|_A$, 
% and 
% \be
%  | \<F^k B E^k, A E^k\> | 
%   & \leq &   \frac{4 \|b\|_\infty}{h}\ |E^k|_A^2\label{eq:est_drift}.
% \ee

%\begin{align*}
%(II)+(III) 
%& \geq \frac{\eta}{2} \|A E^k\|^2 -\frac{\|b\|_\infty}{2h}\Big\{3\|A E^k\|\| E^k_{1}\| + \|A E^k\|\|  E^k_{2}\|\Big\}\\
%& = \frac{\eta}{2} \|A E^k\|^2 - 2 \|b\|_\infty\|A E^k\||E^k|_A\\
%& = \frac{\eta}{2} \|A E^k\|^2 - 2\|b\|_\infty \Big(\sqrt{\frac{\eta}{4\|b\|_\infty}}\|A E^k\|\Big)\Big(\sqrt{\frac{4\|b\|_\infty}{\eta}}|E^k|_A\Big)\\
%& \geq  \frac{\eta}{2} \|A E^k\|^2 - \frac{\eta}{4}\|A E^k\|^2-\frac{4\|b\|_\infty^2}{\eta}|E^k|_A^2\\
%& \geq -\frac{4\|b\|_\infty^2}{\eta}|E^k|_A^2
%\end{align*}

For the last term, using the boundedness of $r$ and the Cauchy-Schwarz inequality,
%we obtain
\be\label{eq:estR}
  |\langle R^k E^k, A E^k\rangle| \leq \|r\|_\infty \|E^k\| \|A E^k\|.
\ee
%Also, by using the fact that $r$ is a constant, we have 
%\be\label{eq:estR}
%  \langle R^k E^k, A E^k\rangle = r |E^k|^2_A.
%\ee

Therefore, putting \eqref{eq:est_diff}, \eqref{eq:est_drift} and \eqref{eq:estR}  together,
\be
  & & \hspace{-2cm} \<\Delta^k A E^k + F^k B E^k + R^k E^k ,AE^k\> \nonumber \\
  & \geq & \frac{\eta}{2} \|A E^k\|^2 - 2 \|b\|_\infty\|A E^k\|\, |E^k|_A 
     - \|r\|_\infty \|A E^k\| \|E^k\| . \label{eq:bound2}
  % & = \frac{\eta}{2} \|A E^k\|^2 
  % - 2\|b\|_\infty \Big(\sqrt{\frac{\eta}{4\|b\|_\infty}}\|A E^k\|\Big)\Big(\sqrt{\frac{4\|b\|_\infty}{\eta}}|E^k|_A\Big)\\
  %& \geq &  \frac{\eta}{2} \|A E^k\|^2 - \frac{\eta}{4}\|A E^k\|^2-\Big(\frac{4\|b\|_\infty^2}{\eta}+\frac{C \|r\|_\infty}{\sqrt{h}}\Big)|E^k|_A^2
  %   \\
  %& \geq &  \frac{\eta}{4} \|A E^k\|^2 -\Big(\frac{4\|b\|_\infty^2}{\eta}+\frac{C \|r\|_\infty}{\sqrt{h}}\Big)|E^k|_A^2.
  %& \geq & -\Big(\frac{4\|b\|_\infty^2}{\eta}+\|r\|_\infty\Big)|E^k|_A^2,
\ee
Easy calculus shows that the minimal eigenvalue of $A$ is $\ml_{\min}(A)=\frac{4}{h^2} \sin^2(\frac{\pi h}{2})\geq~4$.
Hence $\<X,AX\> \geq 4 \<X,X\>$ and therefore $\|X\|\leq \frac{1}{2} |X|_A$.
In the same way, we have also $|X|_A \leq \frac{1}{2} \|AX\|$.
Hence 
it holds 
\be
  \<\Delta^k A E^k + F^k B E^k + R^k E^k ,AE^k\> 
   \geq \frac{\eta}{2} \|A E^k\|^2 - C_1 \| A E^k\| |E^k|_A 
  \label{eq:bis-1}
\ee
with $C_1:=2 \|b\|_\infty + \frac{1}{2} \|r\|_\infty$.
By using 
$  C_1 \| A E^k\| |E^k|_A \leq \frac{\eta}{2} \|A E^k\|^2 + \frac{1}{2\eta} C_1^2 |E^k|_A^2$,
%we finally obtain
\be
  \<\Delta^k A E^k + F^k B E^k + R^k E^k ,AE^k\> 
 \geq  - \frac{1}{2\eta} C_1^2 |E^k|^2_A.
  \label{eq:bis-2}
\ee
Then, combining \eqref{eq:time-est} and \eqref{eq:bis-2}, we obtain the desired inequality with %the constant 
$C:=\frac{2}{\eta} C_1^2$.
\end{proof}

\COMMENT{
\vio{
\fbox{AP:} In the study of the stability (i.e. no zero B.C.) we would have:
\beno
  \<\Delta^k A E^k + F^k B E^k + R^k E^k ,AE^k\> 
 & \geq &  \frac{\eta}{2} \|A E^k\|^2 - \frac{C_1}{h} \| A E^k\| \|\delta E^k\| -\frac{C_1}{h} \| A E^k\|\|\delta^2 E^k\|\\
& \geq &  - \frac{C_1^2}{4\eta }\frac{ \|\delta E^k\|^2}{h^2} -\frac{C_1^2}{4\eta }\frac{ \|\delta^2 E^k\|^2}{h^2}\\
& \geq & - \frac{C_1^2}{2\eta}|E^k|^2_A - \frac{3C_1^2}{2\eta}\frac{(e_1-e_0)^2}{h^2}\quad\Bigg(\approx - \frac{C_1^2}{2\eta}|E^k|^2_A - \frac{3C_1^2}{2\eta}(e_x)_0^2\Bigg)
\eeno
}
}
%\be\label{eq:bis-1}
%  2 \|b\|_\infty \|A E^k\|\, |E^k|_A  
%    & \leq & \frac{\eta}{4} \|A E^k\|^2 + \frac{4}{\eta} \|b\|_\infty^2 |E^k|_A^2.
%    %\\
%\ee
%On the other hand we have
%\be\label{eq:bis-2}
%     \|r\|_\infty \|E^k\| \|A E^k\|
%      & \leq &  \frac{\eta}{4} \|A E^k\|^2 + \frac{1}{\eta} \|r\|_\infty^2 \|E^k\|^2 \\
%      & \leq &  \frac{\eta}{4} \|A E^k\|^2 + \frac{1}{4\eta} \|r\|_\infty^2 |E^k|_A^2.
%\ee

%\vio{
%\begin{remark}
%In case of a drift with variable sign the results easily generalises
%\end{remark}
%It seems to me that the general case also works: we would have
%\begin{align*}
%& -\langle diag(b^+_i)\;(3e_i-4e_{i-1}+e_{i-2})_{i\in\I}\; , \; A e\rangle +\langle diag(b^-_i)\; (3e_i-4e_{i+1}+e_{i+2})_{i\in\I}\; , \; A e\rangle \\
%& \leq |\langle diag(b^+_i)\;(3e_i-4e_{i-1}+e_{i-2})_{i\in\I}\; , \; A e\rangle| +| \langle diag(b^-_i)\; (3e_i-4e_{i+1}+e_{i+2})_{i\in\I}\; , \; A e\rangle |\\
%& \leq |\langle diag(b^+_i)\;(3\delta^-e-(\delta^-)^2 e)\; , \; A e\rangle| +| \langle diag(b^-_i)\; (3\delta^+ e-(\delta^+)^2e)\; , \; A e\rangle |
%\end{align*}
%where $\delta^-e:=(e_i-e_{i-1})_{i\in \I}$ and $\delta^+ %e:=(e_i-e_{i+1})_{i\in \I}$. Then,
%\begin{align*}
%& \leq \| diag(b^+_i)\;(3\delta^-e)\|\| A e\|+\|diag(b^+_i)\;((\delta^-)^2 e)\|\| A e\| +\| diag(b^-_i)\;3(\delta^+e)\|\| A e\|+\|diag(b^-_i)\;((\delta^+)^2 e)\|\| A e\|\\
%& \leq \|b\|_\infty\|Ae\|\left(3\|\delta^-e\|+\|(\delta^-)^2e\|+3\|\delta^+e\|+\|(\delta^+)^2e\|\right)
%\end{align*}
%\fbox{Am I missing something?}
%}
%
% NO NEED OF KANTOROWITCH
%
\if{
\begin{rem}
In the case when $r(.)$ is nonconstant, by using the boundedness of $r$ and the Cauchy-Schwarz inequality,
we obtain
\be\label{eq:estR-b}
  |\langle R^k E^k, A E^k\rangle| \leq \|r\|_\infty \|E^k\| \|A E^k\|.
\ee
%We have also the following result:
%\begin{lemma}\label{lem:kanto}
%\end{lemma}
Then, by applying the Kantorovith inequality
$$
  \<y, A^{-1} y\> \<y, A y\> \leq \frac{1}{4}\bigg(\sqrt{\frac{\ml_{max}(A)}{\ml_{min}(A)}} +  \sqrt{\frac{\ml_{min}(A)}{\ml_{max}(A)}} \bigg)
  \| y\|^4
$$
to  $y:=A^{1/2} X$, and by using the fact that $\ml_{min}(A)\geq 4$ and that $\ml_{max}(A)\leq \frac{4}{h^2}$, we obtain
$$
  \|X\|^2\ \|A X\|^2  \leq \frac{1}{4} (\frac{1}{h}  + 1) |X|_A^4.
$$
Hence there exists a constant $C\geq 0$ independent of $h$ such that, for all $X\in \R^n$: 
\beno%\label{eq:kanto-b}
  \|X\| \| A X\| \leq \frac{C}{\sqrt{h}} |X|_A^2
\eeno
%Hence the bound \eqref{eq:kanto-b}.
We therefore obtain the following inequality:
\be\label{eq:estR-2-b}
  |\< R^k E^k, A E^k\>| \leq \frac{C \|r\|_\infty}{\sqrt{h}} |E^k|_A^2.
\ee
However, because of the factor $\frac{1}{\sqrt{h}}$, this bound is not sufficient in order to conclude as in the previous proof.
\end{rem}
}\fi
%
% END OF [ NO NEED OF KANTOROWITCH ]
%

\bigskip

\subsection{A universal stability lemma}\label{sec:stab-lemma}
%-------------------------------------------------
In the following, it is assumed that $|\cdot|$ is any vectorial norm. We will use the result for the canonical Euclidean norm $|\cdot|\equiv \|\cdot\|$
and the $A$-norm $|\cdot|\equiv |\cdot|_A$.

In order to prove the following Lemma \ref{lem:stability}, we will exploit properties of the matrix %matrices of the following form:
%$$
%  M_\tau:=
%  \begin{pmatrix} 
%    3  & -4  & 1     &  0      &  \\
%    0  &  \phantom{-}3  & -4    &  \ddots & \ddots  \\
%       &  \phantom{-}0  & \ddots&\ddots&  1&   \\
%       &     & \ddots&\ddots&   -4  \\
%       &     &       &  0    &  \phantom{-}3
%  \end{pmatrix}
%$$
\be
  M_\tau:=
  \begin{pmatrix} 
    (3-C\tau)  & -4  & 1     &  0      &  \\
    0  &  \phantom{-}(3-C\tau)  & -4    &  \ddots & \ddots  \\
       &  \phantom{-}0  & \ddots&\ddots&  1   \\
       &     & \ddots&\ddots&   -4  \\
       &     &       &  0    &  \phantom{-}(3-C\tau)   
  \end{pmatrix},
  %= M_0 -C\tau I
  \label{eq:Mtau}
\ee
in particular the fact that $(M_\tau)^{-1}\geq 0$ for $\tau$ small enough (which we prove).
%and allows us to derive stability and error estimates by inversion of the matrix 
%$M_0$ strarting from  relations as \eqref{eq:error_recursion} .   

\begin{lemma}\label{lem:stability}
Assume that there exists a constant $C\geq 0$ such that $\forall k=2,\dots,N$:
\be
%\nonumber
  && \hspace{-1cm} \frac{1}{2}\Big( (3-C\tau)|E^k|^2 -4| E^{k-1}|^2 +|E^{k-2}|^2\Big)+|E^k-E^{k-1}|^2-|E^{k-1}-E^{k-2}|^2
     \nonumber \\
  & & \hspace{8.3cm}
  \leq 2 \tau |E^k|\;|\Eps^k|.
  \label{eq:one_step}
\ee
Then there exists a constant $C_1\geq 0$ and $\tau_0>0$ such that $\forall 0<\tau \leq \tau_0$, %$\tau N=T$ fixed,
$\forall n\le N$:
\be \label{eq:stabnormL2}
  \max_{2\leq k\leq n}|E^k|^2 
   & \leq & C_1 \Big(|E^0|^2 + |E^1|^2 + \tau \sum_{2\leq j\leq n}|\Eps^j|^2\Big).
\ee
\end{lemma}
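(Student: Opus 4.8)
The plan is to recognise the left-hand side of \eqref{eq:one_step} as the exact increment of a discrete BDF2 energy, and then close the estimate with a discrete Gronwall argument. Define, for $k\ge 1$,
$$
  \Phi_k := \tfrac{3}{2}|E^k|^2 - \tfrac{1}{2}|E^{k-1}|^2 + |E^k-E^{k-1}|^2 .
$$
A one-line expansion (writing $e_j=|E^j|^2$, $g_k=|E^k-E^{k-1}|^2$) gives $\Phi_k-\Phi_{k-1}=\tfrac12\big(3e_k-4e_{k-1}+e_{k-2}\big)+g_k-g_{k-1}$, so that \eqref{eq:one_step} is precisely the telescoped inequality
$$
  \Phi_k - \Phi_{k-1} \ \le\ \tfrac{C\tau}{2}\,|E^k|^2 + 2\tau\,|E^k|\,|\Eps^k|, \qquad k=2,\dots,N .
$$
This is the scalar counterpart of the G-stability of BDF2; it is exactly here that the favourable algebraic structure of the scheme (mirrored in the paper by the nonnegativity of $(M_\tau)^{-1}$) enters.

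The key step I would isolate is the \emph{coercivity} of $\Phi_k$. Putting $x=|E^k|$, $y=|E^{k-1}|$ and using only the reverse triangle inequality $|E^k-E^{k-1}|^2\ge(x-y)^2$, one obtains
$$
  \Phi_k \ \ge\ \tfrac{5}{2}x^2 - 2xy + \tfrac{1}{2}y^2 \ =\ (x,y)\,G\,(x,y)^T, \qquad G=\begin{pmatrix} 5/2 & -1 \\ -1 & 1/2 \end{pmatrix}.
$$
Since $G$ is symmetric positive definite ($\det G = 1/4>0$, $\trace G = 3$), there is $c_0:=\lambda_{\min}(G)=\tfrac{3-2\sqrt2}{2}>0$ such that $\Phi_k\ge c_0|E^k|^2$. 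Crucially, this argument uses only the norm axioms, so it is valid for the arbitrary vector norm $|\cdot|$ of the lemma (and hence for both $\|\cdot\|$ and $|\cdot|_A$). At the starting index, the triangle inequality gives the complementary bound $\Phi_1\le C'(|E^0|^2+|E^1|^2)$.

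I would then sum the telescoped inequality from $k=2$ to any $m\le n$, yielding $\Phi_m\le\Phi_1+\tfrac{C\tau}{2}\sum_{k=2}^m|E^k|^2+2\tau\sum_{k=2}^m|E^k||\Eps^k|$. Using $2|E^k||\Eps^k|\le|E^k|^2+|\Eps^k|^2$ and then coercivity to replace each $|E^k|^2$ on the right by $c_0^{-1}\Phi_k$, this becomes
$$
  \Phi_m \ \le\ \Phi_1 + \tau\sum_{k=2}^m|\Eps^k|^2 + \kappa\sum_{k=2}^m\Phi_k, \qquad \kappa=\tfrac{\tau}{c_0}\big(\tfrac{C}{2}+1\big)=O(\tau).
$$
For $\tau\le\tau_0$ small enough $\kappa<1$, so the discrete Gronwall lemma gives $\Phi_m\le e^{C_2}\big(\Phi_1+\tau\sum_{k=2}^n|\Eps^k|^2\big)$ with exponent $C_2=\tfrac{\kappa}{1-\kappa}N=O(1)$ independent of $\tau$ and $h$ (its exponent is $O(\tau)\cdot N=O(1)$). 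Combining $c_0|E^m|^2\le\Phi_m$ with $\Phi_1\le C'(|E^0|^2+|E^1|^2)$ and taking the maximum over $m$ yields \eqref{eq:stabnormL2}.

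The main obstacle is the coercivity of $\Phi_k$: the naive lower bound $\Phi_k\ge\tfrac32|E^k|^2-\tfrac12|E^{k-1}|^2$ is not sign-definite, and only after feeding in the lower bound on $|E^k-E^{k-1}|^2$ does the positive definite form $G$ emerge. This is exactly the algebraic fact — the same underlying positivity that the authors encode as $(M_\tau)^{-1}\ge0$ — that renders the non-monotone BDF2 recursion stable. The remaining points, namely that everything is phrased through the norm axioms so as to cover both $\|\cdot\|$ and $|\cdot|_A$, and that the Gronwall constant remains bounded for $\tau\le\tau_0$, are then routine.
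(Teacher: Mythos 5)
Your proof is correct, and it takes a genuinely different route from the paper. The paper treats \eqref{eq:one_step} as a vector inequality $M_\tau z\le w$ for $z=(x_k,\dots,x_2)^T$, factorizes $M_\tau=(\lambda_1 I-J)(\lambda_2 I-J)$ with $J=\tridiag(0,0,1)$, and proves componentwise nonnegativity plus uniform boundedness and monotonicity of the coefficients $a_p$ of $M_\tau^{-1}=\sum_p a_pJ^p$; the monotonicity of $a_p$ is what lets the authors telescope the difference terms $y_{k-1}-y_k$ after applying $M_\tau^{-1}$. You instead build the G-stability Lyapunov functional $\Phi_k=\tfrac32|E^k|^2-\tfrac12|E^{k-1}|^2+|E^k-E^{k-1}|^2$, whose increment reproduces the left-hand side of \eqref{eq:one_step} exactly (your telescoping identity checks out), and the heart of your argument — coercivity $\Phi_k\ge\lambda_{\min}(G)\,|E^k|^2$ with $G=\bigl(\begin{smallmatrix}5/2&-1\\-1&1/2\end{smallmatrix}\bigr)$, $\lambda_{\min}(G)=\tfrac{3-2\sqrt2}{2}>0$ via the reverse triangle inequality — is verified ($\det G=\tfrac14$, and the eigenvalue computation is right). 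This coercivity also gives $\Phi_k\ge0$ at all intermediate indices, which is what legitimizes your discrete Gronwall step, and since $\kappa=O(\tau)$ with $N\tau\le T$ the growth factor stays bounded uniformly in $\tau$ and $h$, as you note; everything is phrased through norm axioms only, so the result indeed holds for both $\|\cdot\|$ and $|\cdot|_A$, as the lemma requires. Compared with the paper, your argument is shorter and more standard (it is Dahlquist-type G-stability adapted to squared scalar norms, absorbing the $y_k$-differences into $\Phi_k$ automatically rather than via the $a_p$-monotonicity), while the paper's $M_\tau^{-1}\ge0$ route is more explicit about constants (e.g.\ $C_0=\tfrac32e^{2CT}$) and exhibits the discrete resolvent structure that motivates their treatment of the two-step recursion; both yield the stated bound with $C_1$ depending only on $C$ and $T$.
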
 
\begin{proof}%[Proof of Lemma~\ref{lem:stability}]
Let us denote
$$
x_k:= |E^k|^2\qquad \text{and}\qquad y_k:=|E^{k}-E^{k-1}|^2,
$$
so that \eqref{eq:one_step} reads
\be\label{eq:one_stepxy}
\Big( (3-C\tau) x_k-4x_{k-1} +x_{k-2}\Big)\leq  2(y_{k-1} -y_k) + 4 \tau |E^k|\;|\Eps^k|.
\ee
For a given $\tau>0$ and given $k$, let $M_\tau \in \R^{(k-1)\times (k-1)}$ as defined in \eqref{eq:Mtau}.
%Let $M$ be the following matrix of $\R^{(k-1)\times (k-1)}$:
%$$
%  M_\tau:=
%  \begin{pmatrix} 
%    (3-C\tau)  & -4  & 1     &  0      &  \\
%    0  &  \phantom{-}(3-C\tau)  & -4    &  \ddots & \ddots  \\
%       &  \phantom{-}0  & \ddots&\ddots&  \ddots &   \\
%       &     & \ddots&\ddots&   -4  \\
%       &     &       &  0    &  \phantom{-}(3-C\tau)   
%  \end{pmatrix}= M_0 -C\tau I
%$$
Let $z, w \in \R^{k-1}$ be defined by 
$$
z:=(x_k, x_{k-1}, \dots, x_2)^T \quad\text{and}\qquad 
w:=(2(y_{j-1}-y_j) + 4\tau |E^j|\;|\Eps^j|)_{j=k,\dots,2}.
$$
By \eqref{eq:one_stepxy}, we have
\be\label{eq:mozw} 
   M_\tau z \leq w.
\ee
We notice that $M_\tau=(3-C\tau) I -4 J + J^2$ with
$$
  J:=\tridiag(0,0,1).
$$
Hence, with
$$
  \lambda_{1}= 2 +  \sqrt{1+C\tau} \quad \mbox{and} \quad
  \lambda_{2}= 2 -  \sqrt{1+C\tau},
$$
%($\ml_1,\ml_2$ are 
the roots of $\ml^2 - 4\ml + (3-C\tau) =0$ for $3- C\tau\geq0$,
we can write
$$
  M_\tau= (\lambda_1 I-J)(\lambda_2I-J) = \lambda_1\lambda_2\left(I-\frac{J}{\lambda_1}\right)\left(I-\frac{J}{\lambda_2}\right).
$$
Furthermore, since $J^{k-1}=0$, it holds
\begin{align*}
  M_\tau^{-1} & =   \frac{1}{\lambda_1\lambda_2}\left(I-\frac{J}{\lambda_1}\right)^{-1}\left(I-\frac{J}{\lambda_2}\right)^{-1}\\
  & =  \frac{1}{\lambda_1\lambda_2} \left(\sum_{0\leq q \leq k-2} 
    \left(\frac{J}{\lambda_1}\right)^q\right) \left(\sum_{0\leq q \leq k-2} \left(\frac{J}{\lambda_2}\right)^q\right)
  = \sum_{p=0}^{k-2} a_p J^p,
\end{align*}  
where 
$$
  a_p:=\sum^p_{j=0} \frac{1}{\lambda_1^{j+1}\lambda_2^{p-j+1}}
  = \frac{1}{\lambda_2^{p+2}}\sum^p_{j=0} \left(\frac{\lambda_2}{\lambda_1}\right)^{j+1}.
$$
Therefore $M_\tau^{-1}\geq 0 $ componentwise (for $\tau<3/C$), and using \eqref{eq:mozw} it holds $z \leq M_\tau^{-1} w$. 

It is possible to prove that there exists $\tau_0>0$ 
and a constant $C_0\geq 0$ (depending only on $T$)
such that  $\forall 0<\tau\leq \tau_0$ and $\forall p\geq 0$:
\be\label{eq:a}
  0\leq a_p\leq C_0\quad\text{and}\quad a_p-a_{p-1}\geq 0.
\ee
We postpone the proof of \eqref{eq:a} to the end.
For the first component of $z$, we deduce
\be
   x_k
  & \leq & \sum_{j=0}^{k-2} a_j w_{j+1} \nonumber \\
  %a_0 w_1 + a_1 w_2 + \dots + a_{k-2} w_{k-1} \nonumber \\
  & \leq & 2  \sum_{j=0}^{k-2} a_j (y_{k-j-1}-y_{k-j})
  %2 a_0 (y_{k-1}-y_k) + 2 a_1 (y_{k-2}-y_{k-1}) + \dots + 2 a_{k-2} (y_1-y_2)
        + {4C_0\tau} \sum_{j=2}^k |E^j|\; |\Eps^j|  \label{eq:ineq1} \\
%  & & \label{eq:ineq1}\\
  & = &  -2a_0 y_k + 
  2  \sum_{j=0}^{k-3} (a_j-a_{j+1}) y_{k-j+1}
  %2(a_0-a_1)y_{k-1} + \dots + 2(a_{k-3}-a_{k-2}) y_{2} + 
  + 2 a_{k-2} y_1 + {4C_0\tau} \sum_{j=2}^k |E^j|\; |\Eps^j|,\nonumber
\ee
for all $k\geq 2$,
where, for \eqref{eq:ineq1}, we have used the fact that $a_p\leq C_0$.
Since $y_j\geq 0$, $\forall j$, by definition, $a_{k-2}\leq C_0$, $a_0= \frac{1}{\lambda_1\lambda_2}\geq 0$ and $a_j-a_{j-1}\geq 0$, $\forall j$, 
we obtain
\be\label{eq:inter-estim}
   x_k & \leq & 2 C_0 y_1 + {4C_0\tau} \sum_{j=2}^k |E^j|\; |\Eps^j|.
\ee
Recalling the definition of $x_k$ and $y_k$, for any $2\leq k\leq n$ one has:
\beno
  |E^k|^2 
  &\leq & 2C_0 |E^1-E^0|^2 +  4C_0\tau \sum_{j=2}^k |E^j|\; |\Eps^j|\\
  &\leq & 4C_0 (|E^0|^2 + |E^1|^2) + 4C_0\tau\Big(\max_{2\leq k\leq n}|E^k|\Big)\sum_{j=2}^n |\Eps^j|\\
  &\leq & 4C_0 (|E^0|^2 + |E^1|^2) + \frac{1}{2}\Big(\max_{2\leq k\leq n}|E^k|\Big)^2 + {8C_0^2\tau^2} \Big(\sum_{j=2}^n |\Eps^j|\Big)^2
\eeno
(where we made use of $2ab\leq \frac{a^2}{K} +K b^2 $ for any $a,b\geq0$ and $K>0$).
Hence, we obtain
%$$
% \frac{1}{2} \Big(\max_{2\leq k\leq n}|E^k|\Big)^2
%   \leq 4C_0 (|E^0|^2+|E^1|^2) + 8 C_0^2\tau^2  \Big(\sum_{j=2}^n |\Eps^j|\Big)^2,
%$$
%which leads to
\beno
 \Big(\max_{2\leq k\leq n}|E^k|\Big)^2 
   %& \leq & 4C_0 |E^1-E^0|^2 + 16C_0^2 \tau^2 \Big(\sum_{j=2}^n |\Eps^k|\Big)^2\\
%   & \leq & 8 C_0|E^0|^2 + 8C_0|E^1|^2 + 16 C_0^2\tau^2 \Big(\sum_{j=2}^n |\Eps^j|\Big)^2\\
   & \leq & C_1\Big(|E^0|^2 +  |E^1|^2 +  \tau \sum_{j=2}^n |\Eps^j|^2\Big)
\eeno
with $C_1 := \max(8C_0, 16 C_0^2 T)$ 
(we used $\Big(\sum_{j=2}^n |\Eps^j|\Big)^2 \leq n \sum_{j=2}^n |\Eps^j|^2$ and $n\tau \leq T$).

It remains to prove \eqref{eq:a}.
%\cblue{\fbox{I removed the lemma to avoid 2 lemmas one inside the other}}\\
From the definition of $a_p$ one has
$$
  a_p  = \frac{1}{\lambda_2^{p+2}}\sum^p_{j=0} \left(\frac{\lambda_2}{\lambda_1}\right)^{j+1}  
    \leq   \frac{1}{\lambda_2^{p+2}} \left(1-\frac{\lambda_2}{\lambda_1}\right)^{-1}
$$
for $p=0,\ldots, k-2$.
Observing that $\frac{\lambda_2}{\lambda_1}\leq \frac{1}{3}$, it follows that
$$
  a_p \leq  \frac{3}{2 \lambda_2^{p+2}} \leq \frac{3}{2 (2-\sqrt{1+C\tau})^{n}}.
$$
Notice that $\sqrt{1+C\tau}\leq 1+C\tau$, and also that $e^{-x} \leq 1- x/2$, $\forall x\in[0,1]$.
Hence
$(2-\sqrt{1+C\tau})^n\geq (2-(1+C\tau))^n = (1-C\tau)^n \geq (e^{-2C\tau})^n = e^{-2 C t_n}$ for $C\tau \leq \frac{1}{2}$,
and therefore $a_p\leq \frac{3}{2} e^{2Ct_n}$. The desired result follows with $C_0:=\frac{3}{2} e^{2CT}$ and $\tau_0:=\frac{1}{2C}$.

%By Taylor expansion
%$$
%  \sqrt{1+C\tau}= 1+\frac{1}{2}C \tau + O(\tau^2),
%$$
%hence the uniform boundedness of $a_p$ for $\tau$ small enough follows by the convergence of 
%$$
%  \frac{3}{2 (1-C\tau)^{T/\tau}} 
%$$
%as $\tau$ goes to $0$. 
Moreover, one has
$$
  a_p-a_{p-1} 
    = \frac{1}{\lambda_2^{p+1}}\left(\frac{1}{\lambda_2}\sum^p_{j=0} 
    \left(\frac{\lambda_2}{\lambda_1}\right)^{j+1}- \sum^{p-1}_{j=0} \left(\frac{\lambda_2}{\lambda_1}\right)^{j+1} \right),
$$
which is nonnegative for $\tau$ small enough thanks to the fact that $\lambda_1,\lambda_2\geq 0$ and $\lambda_2\leq 1$. 
\end{proof}

%\subsection{Proof of Theorem~\ref{th:main}}\label{sec:3.3}
%------------------------------------

%The strong stability result \eqref{eq:main-errorH1} 
%follows directly from Lemma \ref{lem:nonlin2lin}, Lemma~\ref{lem:vect2Anorm} 
%and Lemma~\ref{lem:stability}.  

%Now we turn on the precise statement and proof of the stability result~\eqref{eq:main-stabH1}. 

%\centerline{ \cblue{\fbox{To be completed.}} }

%\footnote{ (Olivier: I can complete this part later on.) In Athena's approach, the boundary terms (for the drift)
%were not treated in the same way as the other terms.

\COMMENT{Observe that there are two main differences: 
first Lemma \ref{lem:nonlin2lin} is not necessary anymore since 
it is sufficient to consider the optimal control $\bar a^k_i$ for $H[u^k]_i$ to get a linear recursion. 
Second the consistency error is replaced by $\ell$. 
Moreover, boundary term have to be taken into account in this case. \cblue{\fbox{To be completed.}}  
\vio{
\fbox{AP:} When studying the boundedness we would have (see above) 
\be\label{eq:stab_u}
  \frac{1}{2}\Big( (3-C\tau)|u^k|_A^2 -4|u^{k-1}|_A^2 +|u^{k-2}|_A^2\Big)+|u^k-u^{k-1}|_A^2-|u^{k-1}-u^{k-2}|_A^2\leq 2 \tau |u^k|_A\;|\ell^k|_A+2\tau\frac{(u^k_1-u^k_0)^2}{h^2} .
\ee
Adding this term in the proof of Lemma \ref{lem:stability} it seems to me that we get the following:
\beno
  \max_{2\leq k\leq n}|u^k|^2_A 
   & \leq & C_1 \Big(|u^0|^2_A + |u^1|^2_A + \tau \sum_{2\leq j\leq n}|\ell^j|_A^2 +\tau \sum_{2\leq j\leq n} \frac{(u^j_1-u^j_0)^2}{h^2}\Big).
\eeno
P.S. With variable sign of the drift we have also $\tau \sum_{2\leq j\leq n} \frac{(u^j_{I+1}-u^j_{I})^2}{h^2}$
}
}
%}

%--------------------------------------------
%--------------------------------------------
\section{Stability in the Euclidean norm} \label{sec:L2}
%--------------------------------------------
%--------------------------------------------

The fundamental stability result given by  Lemma \ref{lem:stability} 
applies to any vectorial norm.
In this section, we discuss some special cases where \eqref{eq:one_step} can be obtained for
the Euclidean norm $|\cdot|=\|\cdot\|$.

%We also saw that applying the arguments in Section \ref{sec:vec2lin} (see Lemma \ref{lem:nonlin2lin} and Section \ref{sec:3.4}) 
%we can pass from a nonlinear equation to a linear one and for this reason we start here by considering the linear case.
We first prove the stability result for this norm under the extra assumption (A3), i.e., the control may appear 
except in the diffusion term, which must also be Lipschitz continuous in the following proof.

\subsection{Proof of Theorem~\ref{th:L2stab} {\bf (stability in the Euclidean norm)} }\label{sec:l2} 
%------------------------------------

%Let the assumptions of Theorem \ref{th:L2stab} be satisfied.
%\subsection{One dimensional case}
%We consider the following linear equation
%\be\label{eq:main:linear}
%v_t -\frac{1}{2}\sigma^2(t,x) v_{xx} +b(t,x)v_x +r(t,x) v +\ell(t,x)=0.
%\ee
%\medskip
%\noindent
%{\bf Assumption (A3)} There exists $L\geq 0$ such that 
%$$
%|\sigma^2(t,x)-\sigma^2(t,y)| \leq L |x-y| \quad\forall x,y\in\Omega, t\in [0,T].
%$$
%\begin{prop}\label{prop:L2_1d}
%Let  assumptions (A1),(A2) and (A3) be satisfied. Then \eqref{eq:one_step} holds for $|\cdot|=\|\cdot\|$.
%\end{prop}
%\begin{proof} 

%We recall that \eqref{eq:err-recu-lin_gen} holds for $E^k$.
%\beno
%  \Delta^k=\frac{1}{2} \diag\left((\tilde \ms^k_i)^2\right),\quad 
%  F^k = \diag\left(\tilde b^k_i\right),\quad 
%  R^k=\diag\left(\tilde r^k_i\right).
%\eeno
We consider the scalar product of \eqref{eq:err-recu-lin_gen} directly with $E^k$ (instead of $A E^k$ previously used),
again in the situation where $b\geq 0$ to simplify the argument. We obtain:
\begin{align}\label{eq:scalarL2}
  \langle E^k,3E^k - 4 E^{k-1}+E^{k-2}\rangle 
   + 2\tau \langle E^k,\Delta^k A E^k + F^k B E^k + R^k E^k\rangle = -2\tau\langle E^k,\Eps^k\rangle.
\end{align}
As in Section~\ref{sec:vect2scalar}, we have
\be
%\begin{align}
  & & \langle E^k,3E^k - 4 E^{k-1}+E^{k-2}\rangle \label{eq:timeL2} 
    \label{eq:newscalar}\\
  & & \hspace{1cm} \geq \frac{1}{2} \left( 3 \|E^k\|^2  - 4\|E^{k-1}\|^2 +\|E^{k-2}\|^2\right) + \|E^k-E^{k-1}\|^2 - \|E^{k-1}-E^{k-2}\|^2.
    \nonumber
%\end{align}
\ee
We now focus on bounding the other terms on the left-hand side of \eqref{eq:scalarL2}.
%\be\label{eq:L2est-r}
%\langle E^k,R^k E^k\rangle\geq -\|r\|_\infty \|E^k\|^2.
%\ee

By using the Lipschitz continuity of $\sigma^2$ 
%and the zero boundary conditions for the error,
one has 
\begin{align}
 \langle E^k,\Delta^k A E^k\rangle 
  & = \sum_{i\in\mathbb I} \frac{(\sigma^k_i)^2}{2h^2}(-E^k_{i+1}+2E^k_{i}-E^k_{i-1}) E^k_i\nonumber\\
%  & = \sum_{i\in\mathbb I} \frac{(\sigma^k_{i-1})^2}{2h^2}(E^k_{i-1}-E^k_{i}) E^k_{i-1}
 %    +\sum_{i\in\mathbb I} \frac{(\sigma^k_i)^2}{2h^2}(E^k_{i}-E^k_{i-1}) E^k_i\nonumber\\ 
  & = \sum_{i\in\mathbb I}\frac{(\sigma^k_{i-1})^2}{2h^2}(E^k_{i-1}-E^k_{i})^2
     +\sum_{i\in\mathbb I} 
       \left(\frac{(\sigma^k_{i-1})^2}{2h^2}-\frac{(\sigma^k_i)^2}{2h^2}\right) (E^k_{i-1}-E^k_i) E^k_i\nonumber\\
  & \geq \frac{\eta}{2h^2} \sum_{i\in\mathbb I}(E^k_{i-1}-E^k_{i})^2 
    - \frac{L}{2h} \sum_{i\in\mathbb I}|E^k_{i-1}-E^k_{i}| |E^k_i|. \nonumber
\end{align}
%Defining 
%$$
%N(E^k):=\Big(\sum_{i\in\mathbb I}|e^k_{i-1}-e^k_{i}|^2\Big)^{1/2}
%$$
Therefore, by the Cauchy-Schwarz inequality, one obtains
\be\label{eq:L2est-diff}
\langle E^k,\Delta_k A E^k\rangle \geq  \frac{\eta}{2h^2}\|\delta E^k\|^2 - \frac{L}{2h} \|\delta E^k\|\|E^k\|,
\ee
where $\delta E^k$ is defined by \eqref{eqdef:deltaE}.
Moreover, for the first order term one has 
\begin{align}
  \langle E^k,F^k B E^k\rangle  
  & =  \sum_{i\in\mathbb I} \frac{b_i}{2h}(3E^k_{i}-4E^k_{i-1}+E^k_{i-2}) E^k_i\nonumber\\
  & \geq -  \frac{3 \|b\|_\infty}{2h}\sum_{i\in\mathbb I}|E^k_{i}-E^k_{i-1}| |E^k_i| - \frac{\|b\|_\infty}{2h} \sum_{i\in\mathbb I}|E^k_{i-1}-E^k_{i-2}| |E^k_i|\nonumber\\
  & \geq  -  \frac{2 \|b\|_\infty}{h}\|\delta E^k\|\|E^k\|,\label{eq:L2est-drift}
\end{align}
where for the last equality we have used that $\|\delta^2 E^k\|\leq \|\delta E^k\|$.
%$$
Putting together estimates %\eqref{eq:L2est-r},
\eqref{eq:L2est-diff} and \eqref{eq:L2est-drift}, using the fact that $\langle E^k,R^k E^k\rangle\geq -\|r\|_\infty \|E^k\|^2$,
we get
\begin{align*}
  \langle E^k,\Delta_k A E^k + F_k B E^k + R^k E^k\rangle 
    & \geq \frac{\eta}{2h^2} \|\delta E^k\|^2 - \frac{C_1}{2h} \|\delta E^k\|\|E^k\|-\|r\|_\infty\|E^k\|^2\\
    & \geq \frac{\eta}{4h^2} \|\delta E^k\|^2 - \left(\frac{C_1^2}{4\eta} + \|r\|_\infty\right) \|E^k\|^2,
\end{align*}
where we have denoted $C_1:=L+4\|b\|_\infty$ and have used again the Cauchy-Schwarz inequality.
Hence, together with \eqref{eq:timeL2}, this gives \eqref{eq:one_step} with $|\cdot|=\|\cdot\|$ and the constant
$C:=4 (\frac{C_1^2}{4\eta}+\|r\|_\infty)$.
By using Lemma~\ref{lem:stability}, this concludes the proof of Theorem~\ref{th:L2stab}. 
%\todo[inline]{(O) are you sure about the factor $4$ in $C$ ?\\  % OLIVIER : OK
%\cblue{AP: It seems ok to me, coming from $\frac{C\tau}{2}=2\tau\Big(\frac{C_1^2}{4\eta}+\|r\|_\infty\Big)$}}
\hfill$\Box$

%\begin{rem}
%In the case of controlled equations,
%assumptions (A1), (A2) and (A3) have to be satisfied also when 
%the dependence of the coefficients on the control is taken into account. 
%In assumptions (A1) and (A2), the bound is already considered uniformly with respect to the control, 
%whereas the adaptation of  assumption (A3) to the controlled case 
%would impose some Lipschitz regularity of the control with respect to the state variable. 
%Such regularity of the control cannot usually 
%be expected (see for instance the tests proposed in Section \ref{sec:num}).
%However, the  case where the controls only appear in the drift term would be a reasonable setting for assumption (A3).
%%\end{rem}

%\begin{rem}
%We point out that the boundedness of $b$ in assumption (A1) can be replaced by its Lipschitz continuity. We consider here the Lipschitz continuity of $b$ a stronger assumption with respect to its boundedness in view of applications to control theory where the optimal control depends on $(t,x)$ but generally not in a Lipschitz way.
%On the contrary, for a possibly degenerate diffusion, as in (A1),  the Lipschitz continuity of $b$  is necessary.
%\end{rem}

%---------------------------------------------------------------------------
\subsection{Linear equation with degenerate diffusion term} \label{sec:linear1d}
%---------------------------------------------------------------------------
The next result concerns the case of a possibly degenerate diffusion term. 
It will require more restrictive assumptions on the drift and diffusion terms, and we shall assume that there is no control here.
Indeed, in this case, one cannot count on the positive term coming from the non-degenerate diffusion which, 
in the proof of 
%Proposition \ref{prop:L2_1d}, 
Theorem~\ref{th:L2stab},
is used to compensate the negative correction terms coming from the drift term. % and the Lipschitz rescaling of the diffusion coefficient.
This leads us to consider the following assumptions:

%{\bf Assumption  (A4)} 
\medskip
{\sc Assumption (A4).} $r$ is bounded. The drift and diffusion coefficients are independent of the control, i.e.\
%$$
%b\equiv b(t,x)\quad\text{and}\quad \ms\equiv \ms(t,x),
%$$
$b \equiv b(t,x)$ and $\ms \equiv \ms(t,x)$,
 and there exist $L_1, L_2\geq 0$ such that,
for all $t,x,h$:
\be
 & & |b(t,x)-b(t,y)| \leq L_1 |x-y|, \\ 	 %	&\quad  \forall x,y\in\Omega, t\in [0,T], \\
 & &
 \frac{\ms^2(t,x-h)- 2\ms^2(t,x) + \ms^2(t,x+h)}{h^2}  \geq -L_2.\label{eq:semiconc}    %  	&\quad \forall t\in[0,T],\ x\in\mO,\ h>0.
%{ \!\! \vert D_x(\sigma^2)(t,x)- D_x(\sigma^2)(t,y) \vert} & \le {L_1 \vert x-y\vert } & \foral x,y\in\Omega, t\in [0,T].
\ee
%{\sc Assumption (A4).} \begin{itemize}
%\item
%$r$ is bounded, 
%$b$ has constant sign 
%\item
%$b=b(t,x)$ and there exist $L_1\geq 0$ such that,
%for all $t,x,h$:
%\be
% & & |b(t,x)-b(t,y)| \leq L_1 |x-y| 	 %	&\quad  \forall x,y\in\Omega, t\in [0,T], \\
%\ee

%\item
%$\ms=\ms(t,x)$ and there exists $L_2\geq 0$, for all $t,x,h$:
%\be
% & &
% \frac{\ms^2(t,x-h)- 2\ms^2(t,x) + \ms^2(t,x+h)}{h^2}  \geq -L_2.    %  	&\quad \forall t\in[0,T],\ x\in\mO,\ h>0.
%{ \!\! \vert D_x(\sigma^2)(t,x)- D_x(\sigma^2)(t,y) \vert} & \le {L_1 \vert x-y\vert } & \foral x,y\in\Omega, t\in [0,T].
%\ee
%\end{itemize}
%\end{align}
(The last condition is equivalent to $(\ms^2)_{xx}\geq -L_2$ in the differentiable case.)

\begin{prop}\label{prop:L2_1d_deg}
Let assumption (A4) be satisfied. Then \eqref{eq:one_step} holds for $|\cdot|=\|\cdot\|$.
\end{prop}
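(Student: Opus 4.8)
The plan is to run an energy argument in the Euclidean norm, testing the error recursion against $E^k$ itself rather than against $A E^k$ as in the $A$-norm analysis. The essential new difficulty, compared with Theorem~\ref{th:L2stab}, is that assumption (A4) allows $\ms$ to vanish, so the dissipative term $\tfrac{\eta}{2h^2}\|\delta E^k\|^2$ that powered the absorption of all the cross terms in Section~\ref{sec:l2} is no longer available. Concretely, I would start from the linear error recursion \eqref{eq:err-recu-lin_gen}, take the scalar product with $E^k$ as in \eqref{eq:scalarL2}, and treat the discrete time derivative exactly as in \eqref{eq:timeL2}, so that the left-hand side of \eqref{eq:one_step} (with $|\cdot|=\|\cdot\|$) is reproduced up to the quantity $2\tau\langle E^k,(\Delta^k A+F^k B+R^k)E^k\rangle$. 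The whole proof then reduces to a one-sided bound $\langle E^k,(\Delta^k A+F^k B+R^k)E^k\rangle\ge -C\|E^k\|^2$ with $C$ independent of $\tau,h$; inserting this into the time term yields \eqref{eq:one_step} in the stated norm.

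For the diffusion term I would perform summation by parts twice. The first summation by parts is the identity $\langle E^k,\Delta^k A E^k\rangle=\sum_{i}\tfrac{(\ms^k_{i-1})^2}{2h^2}(E^k_i-E^k_{i-1})^2+\sum_i\Big(\tfrac{(\ms^k_{i-1})^2}{2h^2}-\tfrac{(\ms^k_i)^2}{2h^2}\Big)(E^k_{i-1}-E^k_i)E^k_i$ already used in Section~\ref{sec:l2}; rearranging it into symmetric sum-of-squares form, I would obtain a manifestly nonnegative ``gradient'' part $\tfrac{1}{4h^2}\sum_i\big((\ms^k_i)^2+(\ms^k_{i-1})^2\big)(E^k_i-E^k_{i-1})^2\ge 0$ plus a purely diagonal remainder $-\tfrac14\sum_i\kappa^k_i(E^k_i)^2$, where $\kappa^k_i=\big((\ms^k_{i-1})^2-2(\ms^k_i)^2+(\ms^k_{i+1})^2\big)/h^2$ is exactly the discrete second difference appearing in \eqref{eq:semiconc}. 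The bound \eqref{eq:semiconc} on $\kappa^k_i$ then controls this diagonal remainder by $-C_\sigma\|E^k\|^2$, giving $\langle E^k,\Delta^k A E^k\rangle\ge -C_\sigma\|E^k\|^2$ even when $\ms$ degenerates. \emph{This is the step I expect to be the main obstacle}: in the degenerate regime the gradient part may vanish, so the required lower bound must come entirely from the structural control on the second difference of $\ms^2$, which is precisely the role of (A4); matching the sign of this remainder against \eqref{eq:semiconc} and tracking the constants (and the boundary contributions) in the second summation by parts is the delicate point.

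For the drift term, using the one-sided difference $D^{1,-}$ (the sign $b\ge 0$ is assumed for exposition, $b^-$ being handled symmetrically through $D^{1,+}$), I would \emph{not} estimate the mixed differences crudely, since the naive Cauchy--Schwarz bound produces an uncontrollable $O(1/h)$ term in the absence of diffusion. Instead I would keep intact the combination that, for frozen $b$, is the symmetric part of $b\,B$: a short computation with its symbol $3-4\cos\theta+\cos2\theta=2(1-\cos\theta)^2$ shows this symmetric part is positive semidefinite, i.e. the BDF upwinding carries its own numerical dissipation. The remaining terms measure the variation of $b$ and, by the Lipschitz bound in (A4), contribute a first difference of $b$ paired with quadratic expressions in $E^k$, hence are bounded below by $-C_b\|E^k\|^2$; discarding the nonnegative dissipation gives $\langle E^k,F^k B E^k\rangle\ge -C_b\|E^k\|^2$. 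The zeroth-order term is immediate, $\langle E^k,R^k E^k\rangle\ge-\|r\|_\infty\|E^k\|^2$. Summing the three lower bounds yields $\langle E^k,(\Delta^k A+F^k B+R^k)E^k\rangle\ge -C\|E^k\|^2$ with $C=C(L_1,L_2,\|r\|_\infty)$, which together with the time term furnishes \eqref{eq:one_step} for $|\cdot|=\|\cdot\|$ and completes the argument.
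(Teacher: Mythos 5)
Your proposal is correct and follows the same skeleton as the paper's proof: test the linear recursion \eqref{eq:err-recu-lin_gen} with $E^k$, reuse the time-term estimate \eqref{eq:timeL2}, and reduce everything to the one-sided bound $\langle E^k,(\Delta^k A+F^k B+R^k)E^k\rangle\ge -C\|E^k\|^2$. For the diffusion term, your exact identity
$\langle E^k,\Delta^k A E^k\rangle=\frac{1}{4h^2}\sum_i\big((\sigma^k_i)^2+(\sigma^k_{i-1})^2\big)(E^k_i-E^k_{i-1})^2-\frac14\sum_i\kappa^k_i (E^k_i)^2$
is precisely the sharp form of the paper's estimate: the paper reaches the same diagonal remainder via the pointwise Young inequality $E_i(2E_i-E_{i-1}-E_{i+1})\ge\frac12(2E_i^2-E_{i-1}^2-E_{i+1}^2)$ and reindexing, implicitly discarding the nonnegative gradient part that you keep explicit. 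One point you inherit from the paper: the remainder $-\frac14\sum_i\kappa^k_i(E^k_i)^2$ is bounded below by $-\frac{L_2}{4}\|E^k\|^2$ only if $\kappa^k_i\le L_2$, i.e.\ the second difference of $\sigma^2$ is bounded \emph{above} (semiconcavity, which is what the paper's proof invokes by name), whereas \eqref{eq:semiconc} as printed states the lower bound $\ge -L_2$. The ``delicate sign matching'' you flag is thus real, but it is a sign typo in (A4) rather than an obstacle: both your argument and the paper's need the upper-bound direction.

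The only genuine deviation is the drift term, and there your sketch needs one repair. The paper proves the dissipativity of the BDF2 upwinding by the pointwise identity $(3E_i-4E_{i-1}+E_{i-2})E_i=\frac12(3E_i^2-4E_{i-1}^2+E_{i-2}^2)+\frac12\big(4|E_i-E_{i-1}|^2-|E_i-E_{i-2}|^2\big)$ followed by Abel summation, which places the discrete differences $\frac{3b_i-4b_{i+1}+b_{i+2}}{4h}$ and $\frac{b_i-b_{i+1}}{h}$ on $b$, both $O(L_1)$ by (A4); your symbol computation $3-4\cos\theta+\cos2\theta=2(1-\cos\theta)^2$ establishes the same constant-coefficient positivity in Fourier terms, which is a legitimate alternative justification. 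However, the reduction to the variable-coefficient case is not as automatic as stated: the quadratic form of $F^kB$ is that of $\frac12(F^kB+B^TF^k)$, and this is \emph{not} ``$F^k$ times the PSD symmetric part of $B$'' --- the symmetrized product of $\diag(b_i)\ge 0$ with a PSD banded Toeplitz matrix need not be PSD. The ``remaining terms measuring the variation of $b$'' therefore have to be extracted explicitly, either by the paper's summation by parts or by commutator estimates (e.g.\ writing the symmetric part of $B$ as $\frac{1}{4h}(\delta^T\delta)^2$ with $\delta$ the first-difference operator and using that the entries of $[F^k,\delta^T\delta]$ are $O(L_1 h)$). Once that is done, your lower bound $\ge -C_b\|E^k\|^2$ is exactly the paper's $\ge -3L_1\|E^k\|^2$, and your final constant $C=C(L_1,L_2,\|r\|_\infty)$ matches the paper's $C=4(\frac{L_2}{4}+3L_1+\|r\|_\infty)$.
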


\begin{proof}
We consider again the scalar recursion \eqref{eq:scalarL2}. 
%Clearly,  $\langle E^k,3E^k - 4 E^{k-1}+E^{k-2}\rangle$ and $\langle E^k, R^k E^k\rangle$ work as in Proposition \ref{prop:L2_1d}.
%
%The matrix $\Delta^k A$ is a symmetric, diagonally dominant matrix with positive diagonal elements. As a consequence, at any time $t_k$, $\Delta^k A$ is positive definite and we can conclude that for any $k=2\ldots N$
%
%\begin{align*}
%\langle E^k,\Delta^k A E^k\rangle \geq 0.
%\end{align*}
%
%
For any vector $E=(E_i)_{1\leq i\leq I}$ (with $E_j=0$ for $j\in\{-1,0,I+1,I+2\}$),
%{\color{red}\xcancel{by using Cauchy-Schwarz inequalities}}, 
it holds:
\beno
  E_i(2E_i - E_{i-1} - E_{i+1})
    & \geq & 2 |E_i|^2  - \fud (|E_i|^2+|E_{i-1}|^2) - \fud (|E_i|^2+|E_{i+1}|^2) \\
    %& = &   2 e_i (e_i - e_{i-1}) + 2 e_i  (e_i - e_{i+1}) \\
    %& = &   \bigg(|e_i|^2 + |e_i - e_{i-1}|^2 - |e_{i-1}|^2\bigg) 
    %   +    \bigg(|e_i|^2 + |e_i - e_{i+1}|^2 - |e_{i+1}|^2\bigg) \\
    & \geq & \fud \ (2 |E_i|^2 - |E_{i-1}|^2 - |E_{i+1}|^2).
\eeno
Hence, by the semi-concavity assumption \eqref{eq:semiconc} on $\ms^2$,
\be
  % \hspace{-1cm} 
   \langle E^k,\Delta^k A E^k\rangle
  &  =   &  \sum_{1\leq i\leq I} \frac{\ms_i^2}{2h^2} E^k_i (2E^k_i - E^k_{i-1} - E^k_{i+1})  \nonumber\\
  & \geq &  \sum_{1\leq i\leq I} \frac{\ms_i^2}{4h^2} (-|E^k_{i-1}|^2 + 2  |E^k_{i}|^2 - |E^k_{i+1}|^2) \nonumber\\
  & \geq &  \sum_{1\leq i\leq I}\bigg(\frac{ - \ms_{i-1}^2 + 2\ms_i^2 - \ms_{i+1}^2}{4h^2}\bigg)\,|E^k_i|^2. \nonumber\\
  & \geq & -  \frac{L_2}{4} \|E^k\|^2. \label{eq:DAbound} 
\ee
Now we focus on a lower bound for  $\langle E^k,F^k B E^k\rangle$. 
Let $y^k_i=|E^k_i-E^k_{i-1}|^2$.
First, % notice that
\beno
  (3E^k_{i}-4E^k_{i-1}+E^k_{i-2}) E^k_i 
   &  = &  \fud (3 |E^k_{i}|^2- 4 |E^k_{i-1}|^2 + |E^k_{i-2}|^2)  \\
   && \hspace{2.1 cm} + \fud (4 |E^k_i - E^k_{i-1}|^2 - |E^k_i - E^k_{i-2}|^2) \\
   & \geq &  \fud (3 |E^k_{i}|^2- 4 |E^k_{i-1}|^2 + |E^k_{i-2}|^2)  + \fud (2 y^k_i - 2 y^k_{i-1}). %|e_i - e_{i-1}|^2 - 2 |e_{i-1} - e_{i-2}|^2)
\eeno

We assume again $b_i\geq 0$ for all $i$ to simplify the presentation. 
The case where %of $b(x)u_x$ for 
$b_i\leq 0$ for some $i$ is similar. %can be included and bounded in a similar way.
Then, the following bound holds:
\beno
   \< E^k,\, F^k B E^k \>
   & =    & \sum_{i=1}^I  \frac{b_i}{2h}(3E^k_{i}-4 E^k_{i-1}+E^k_{i-2}) E^k_i =  \sum_{i=1}^{I+2} \frac{b_i}{2h}(3E^k_{i}-4 E^k_{i-1}+E^k_{i-2}) E^k_i \\
   & \geq & \sum_{i=1}^{I+2}  \frac{b_i}{4h} (3 |E^k_{i}|^2- 4 |E^k_{i-1}|^2 + |E^k_{i-2}|^2)  
          + \sum_{i=1}^{I+2}  \frac{b_i}{h} (y^k_i-y^k_{i-1})  \\
   & \geq  & \sum_{i=1}^I  \bigg(\frac{3 b_i - 4 b_{i+1} + b_{i+2}}{4h}\bigg) |E^k_{i}|^2
          + \sum_{i=1}^{I+1}  \bigg(\frac{b_i-b_{i+1}}{h}\bigg) y^k_i 
\eeno
(where we have used $y^k_{0}=y^k_{I+2}=0$
and $\sum_{1\leq i\leq I+2} b_i (E^k_{i-2})^2 = \sum_{1\leq i\leq I} b_{i+2} (E^k_i)^2 $
as well as
$\sum_{1\leq i\leq I+2} b_i (E^k_{i-1})^2 = \sum_{0\leq i\leq I+1} b_{i+1} (E^k_i)^2 
 = \sum_{1\leq i\leq I} b_{i+1} (E^k_i)^2$).
Then, by the Lipschitz continuity of $b(.)$ and the bound %$\sum_{1\leq i\leq I+1} y_i \leq 2 \|E\|^2$: 
$y^k_i\leq 2(E^k_i)^2 + 2(E^k_{i-1})^2$, we have
\be
   \<  E^k,\, F^k B E^k  \> 
      & \geq &   - L_1 \sum_{i=1}^I |E^k_i|^2 - L_1 \sum_{i=1}^{I+1} y^k_i \geq - 3 L_1 \|E^k\|^2.
      \label{eq:FBbound}
\ee
By combining the bounds \eqref{eq:DAbound} and \eqref{eq:FBbound}, we obtain
$$
   \< E^k,\, \Delta^k A E^k \> + \<  E^k,\, F^k B E^k  \>  + \< E^k,\, R^k E^k\>
    \geq   - (\frac{L_2}{4}  + 3 L_1 + \|r\|_\infty) \|E^k\|^2.
$$
Therefore, inequality \eqref{eq:one_step} is obtained with $C := 4(\frac{L_2}{4} + 3 L_1 + \|r\|_\infty)$,
%{\color{red}(I think is a $4$ factor in $C$ not $2$, again from $\frac{C\tau}{2}=2\tau\Big(\frac{L_2}{4}+3L_1+\|r\|_\infty\Big)$)}, 
which leads to the desired stability estimate.
\end{proof}

\if{

%---------------------
%------ OLD PROOF-----
%---------------------
Moreover, one has 
\begin{align*}
\langle E^k,F^k B E^k\rangle  & =  \sum_{i\in\mathbb I} \frac{b^k_i}{2h}(3e^k_{i}-4e^k_{i-1}+e^k_{i-2}) e^k_i \\
 & \hspace{-1.3 cm} =   \sum_{i\in\mathbb I} \frac{4 b^k_i}{2h}(e^k_{i}-e^k_{i-1})e^k_i - \frac{b^k_i}{2h}(e^k_{i}-e^k_{i-2}) e^k_i   \\
& \hspace{-1.3 cm} = \sum_{i\in\mathbb I} \frac{b^k_i}{h}\Big((e^k_{i})^2+(e^k_i-e^k_{i-1})^2 -(e^k_{i-1})^2\Big) - \frac{b^k_i}{4h}\Big((e^k_{i})^2+(e^k_i-e^k_{i-2})^2 -(e^k_{i-2})^2\Big)\\
%& =  \sum_{i\in\mathbb I}\frac{b^k_i}{4h}\Big(3(e^k_{i})^2 -4(e^k_{i-1})^2 +(e^k_{i-2})^2\Big) + \frac{b^k_i}{4h}\Big(4(e^k_i-e^k_{i-1})^2-(e^k_i-e^k_{i-2})^2\Big)\\
%& \geq  \sum_{i\in\mathbb I}\frac{b^k_i}{4h}\Big(3(e^k_{i})^2 -4(e^k_{i-1})^2 +(e^k_{i-2})^2\Big) + \frac{b^k_i}{4h}\Big(4(e^k_i-e^k_{i-1})^2-2(e^k_i-e^k_{i-1})^2-2(e^k_{i-1}-e^k_{i-2})^2\Big)\\
& \hspace{-1.3 cm} \geq  \sum_{i\in\mathbb I}\frac{b^k_i}{4h}\Big(3(e^k_{i})^2 -4(e^k_{i-1})^2 +(e^k_{i-2})^2 \Big) \pm  \frac{b^k_{i-1}}{h}(e^k_{i-1})^2 \pm \frac{b^k_{i-2}}{4h}(e^k_{i-2})^2\\
& \hspace{-1.3 cm} \quad+ \frac{b^k_i}{2h}\Big((e^k_i-e^k_{i-1})^2-(e^k_{i-1}-e^k_{i-2})^2\Big) \pm \frac{b^k_{i-1}}{2h}(e^k_{i-1}-e^k_{i-2})^2.
\end{align*}

Observing that (using the asymptotic zero boundary conditions)
\begin{eqnarray*}
%\sum_{i=2}^{J}\frac{b_{i-2}}{4h}(e^k_{i-2})^2 =  %\sum_{i=2}^{J-2}\frac{b_{i}}{4h}(e^k_{i})^2, \quad\quad %\sum_{i=2}^{J}\frac{b_{i-1}}{h}(e^k_{i-1})^2  = %\sum_{i=2}^{J}\frac{b_{i}}{h}(e^k_{i})^2
%\sum_{i\in\mathbb I}b^k_{i-2}(e^k_{i-2})^2 =\sum_{i\in\mathbb I}b^k_{i-1}(e^k_{i-1})^2 =\sum_{i\in\mathbb I}b^k_{i}(e^k_{i})^2 %, \quad
\sum_{i\in\mathbb I}b^k_{i-j}(e^k_{i-j})^2 =\sum_{i\in\mathbb I}b^k_{i}(e^k_{i})^2, \, j=1,2, \quad
\sum_{i\in\mathbb I}b^k_{i-1}(e^k_{i-1}-e^k_{i-2})^2 = \sum_{i\in\mathbb I}b^k_{i}(e^k_{i}-e^k_{i-1})^2,
 %= \sum_{i\in\Z}\frac{b^k_{i}}{2h}(e^k_{i}-e^k_{i-1})^2 \leq \sum_{i\in\Z}\frac{b^k_{i}}{2h}(e^k_{i}-e^k_{i-1})^2 
\end{eqnarray*}
one obtains
%\begin{align*}
%(III) & \geq   -\frac{b_{J}}{2h}(e^k_{J})^2 +\sum^{J+1}_{i=1} \Big(\frac{4b_{i-1}}{2h}-\frac{4b_{i}}{2h}\Big)(e^k_{i-1})^2 \\
%& \quad +\Big(\frac{b_{i-2}}{2h}-\frac{b_{i}}{2h}\Big)(e^k_{i-2})^2 % & \geq  -\frac{b_{J}}{2h}(e^k_{J})^2   - L( 4\sum^{J+1}_{i=1}e^k_{i-1} +3 \sum^{J+1}_{i=1}e^k_{i-2}) \\
% & \geq -C \|E_k\|^2 {\color{blue}\xcancel{-\frac{b_{J}}{2h}(e^k_{J})^2 } \quad\substack{\text{something wrong at the boundary...}\\\text{it would be ok using periodic B.C. I think}}}
%\end{align*} 
\begin{align*}
\langle E^k,F^k B E^k\rangle  & \geq  \sum_{i\in\mathbb I} \Big(\frac{b^k_{i-1}}{h}-\frac{b^k_{i}}{h}\Big)(e^k_{i-1})^2 +\Big(\frac{b^k_{i}}{4h}-\frac{b^k_{i-2}}{4h}\Big)(e^k_{i-2})^2 \\
& \quad +\Big(\frac{b^k_{i-1}}{2h}-\frac{b^k_{i}}{2h}\Big)(e^k_{i-1}-e^k_{i-2})^2 \\
%& \quad + \frac{b_{J}}{2h}(e^k_J-e^k_{J-1})^2 -\frac{b_{J-1}}{4h}(e^k_{J-1})^2\\
 &  \geq -L\sum_{i\in\mathbb I}(e^k_{i-1})^2 -\frac{L}{2} \sum_{i\in\mathbb I}(e^k_{i-2})^2 -L\sum_{i\in\mathbb I}(e^k_{i-1})^2 -L \sum_{i\in\mathbb I}(e^k_{i-2})^2\\ 
 %&  \quad +
  %\frac{b_{J}}{2h}(e^k_{J-1})^2 -\frac{b_{J-1}}{4h}(e^k_{J-1})^2\\
% & = -2L\sum^J_{i=2}(e^k_{i-1})^2 -\frac{3L}{2} \sum^J_{i=2}(e^k_{i-2})^2 + \frac{b_{J}}{4h}(e^k_{J-1})^2+ (\frac{b_J}{4h} - \frac{b_{J-1}}{4h})(e^k_{J-1})^2\\
 & \geq  -\frac{7 L}{2} \|E^k\|^2.
\end{align*} 
Therefore, inequality \eqref{eq:one_step} is obtained with $C = 7L+2\|r\|_\infty + {L_1}$.

Applying now Lemma \ref{lem:stability} we obtain the following result:

\begin{theorem}
Let either assumption (A1),(A2),(A3) or (A4) be satisfied, as well as the CFL condition \eqref{eq:CFL}. Then %, the following stability result holds: 
there exists a constant $C\geq 0$ (independent of $\tau$ and $h$) and some $\tau_0>0$ such that, for any $\tau\leq \tau_0$
\be\label{eq:errorL2}
\max_{2\leq k\leq N}\|E^k\|^2 
   & \leq & C \Big(\|E^0\|^2 + \|E^1\|^2 +\tau\sum_{2\leq k\leq N} \|\Eps^k\|^2\Big).
\ee
%Moreover, the scheme is $L_2$-bounded:
%\be\label{eq:stabL2}
%\max_{2\leq k\leq N}\|u^k\|^2 
%   & \leq & C \Big(\|u^0\|^2 + \|u^1\|^2 +\cblue{\tau\sum_{2\leq k\leq N} \|\ell(t_k,\cdot)\|^2+boundary}\Big).
%\ee
\end{theorem}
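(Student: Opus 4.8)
The plan is to follow the blueprint of the proof of Theorem~\ref{th:L2stab} in Section~\ref{sec:l2}: take the Euclidean scalar product of the error recursion \eqref{eq:err-recu-lin_gen} with $E^k$, reproduce the lower bound \eqref{eq:timeL2} for the time-difference term (its derivation uses only the polarization identity and is norm-agnostic), and then bound the spatial operator $\<E^k,\Delta^k A E^k + F^k B E^k + R^k E^k\>$ from below by a multiple of $-\|E^k\|^2$. Combining the two yields exactly \eqref{eq:one_step} for $|\cdot|=\|\cdot\|$, after which Lemma~\ref{lem:stability} closes the argument. The essential new difficulty is that, with possibly vanishing $\ms$, I can no longer produce the coercive term $\frac{\eta}{2h^2}\|\delta E^k\|^2$ that in Theorem~\ref{th:L2stab} absorbed the $O(1/h)$ contribution of the drift; every term must instead be controlled purely in $\|E^k\|^2$, without help from diffusion.

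For the diffusion term I would abandon the Cauchy--Schwarz route and argue algebraically. Starting from the pointwise bound $E_i(2E_i - E_{i-1} - E_{i+1}) \geq \fud(2|E_i|^2 - |E_{i-1}|^2 - |E_{i+1}|^2)$, I obtain $\<E^k,\Delta^k A E^k\> \geq \sum_i \frac{\ms_i^2}{4h^2}(-|E_{i-1}^k|^2 + 2|E_i^k|^2 - |E_{i+1}^k|^2)$. A discrete summation by parts then redistributes the weights onto a single node, producing $\sum_i \frac{-\ms_{i-1}^2 + 2\ms_i^2 - \ms_{i+1}^2}{4h^2}\,|E_i^k|^2$, and here the semi-concavity hypothesis \eqref{eq:semiconc} is precisely what delivers the lower bound $-\frac{L_2}{4}\|E^k\|^2$. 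The point is that \eqref{eq:semiconc} controls the \emph{second} difference of $\ms^2$, so no positivity of $\ms^2$ itself is required.

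The drift term is where the real work lies, and I expect it to be the main obstacle. Using polarization I would first rewrite the upwind BDF2 stencil as
$$(3E^k_i - 4E^k_{i-1} + E^k_{i-2})E^k_i = \fud(3|E^k_i|^2 - 4|E^k_{i-1}|^2 + |E^k_{i-2}|^2) + \fud(4y^k_i - |E^k_i - E^k_{i-2}|^2),$$
with $y^k_i := |E^k_i - E^k_{i-1}|^2$, and then use $|E^k_i - E^k_{i-2}|^2 \le 2y^k_i + 2y^k_{i-1}$ to replace the last bracket by $y^k_i - y^k_{i-1}$. Summing $\frac{b_i}{2h}$ times this over the extended range $i=1,\dots,I+2$ (legitimate since $E$ vanishes at the four ghost nodes, so the boundary increments $y_0,y_{I+2}$ drop out) and reindexing, the $b$-weights reorganize into the discrete difference quotients $\frac{3b_i - 4b_{i+1} + b_{i+2}}{4h}$ multiplying $|E^k_i|^2$ and $\frac{b_i - b_{i+1}}{h}$ multiplying $y^k_i$. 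This is the crucial step: because $b$ depends on $(t,x)$ only and is Lipschitz, each quotient is $O(1)$ — bounded by $L_1$, since $3b_i - 4b_{i+1} + b_{i+2} = 3(b_i - b_{i+1}) - (b_{i+1} - b_{i+2})$ — so, after using $y^k_i \le 2|E^k_i|^2 + 2|E^k_{i-1}|^2$, the whole drift term is bounded below by $-C L_1 \|E^k\|^2$. It is exactly here that the no-control assumption in (A4) is indispensable: a control-dependent, $i$-varying optimal drift would destroy the telescoping needed to turn the dangerous $1/h$ factors into bounded difference quotients of a single Lipschitz function.

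Finally, bounding $\<E^k,R^k E^k\> \geq -\|r\|_\infty\|E^k\|^2$ trivially and collecting the three estimates gives $\<E^k,\Delta^k A E^k + F^k B E^k + R^k E^k\> \geq -(\frac{L_2}{4} + 3L_1 + \|r\|_\infty)\|E^k\|^2$, which is precisely \eqref{eq:one_step} in the Euclidean norm with $C := 4(\frac{L_2}{4} + 3L_1 + \|r\|_\infty)$. As in the earlier proofs, I would treat $b\ge0$ in detail and note that the case $b\le0$ is symmetric.
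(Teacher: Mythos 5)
Your proposal is correct and takes essentially the same route as the paper: the (A1)--(A3) case is exactly the proof of Theorem~\ref{th:L2stab} in Section~\ref{sec:l2}, and your (A4) argument reproduces the paper's proof of Proposition~\ref{prop:L2_1d_deg} step for step --- the pointwise inequality $E_i(2E_i-E_{i-1}-E_{i+1})\geq\fud(2|E_i|^2-|E_{i-1}|^2-|E_{i+1}|^2)$ followed by summation by parts onto the second difference of $\ms^2$ and the bound \eqref{eq:semiconc} for the diffusion, and the polarization identity with $y^k_i$, extension of the sum to $i=1,\dots,I+2$ using the vanishing ghost values, and reindexing into the Lipschitz-bounded quotients $\frac{3b_i-4b_{i+1}+b_{i+2}}{4h}$ and $\frac{b_i-b_{i+1}}{h}$ for the drift --- arriving at the identical constant $C=4(\frac{L_2}{4}+3L_1+\|r\|_\infty)$ before closing via Lemma~\ref{lem:stability}. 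You even correctly isolate the role of the no-control hypothesis in (A4) (preserving the telescoping structure), which matches the paper's own motivation for that assumption.
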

}\fi

%-------------------------------------
\subsection{Extension to a two-dimensional case}
%-------------------------------------

Under suitable assumptions, the  result of Theorem \ref{th:L2stab} can be extended to multi-dimensional equations. The nonlinearity can be treated exactly as in Section \ref{sec:vec2lin} (or \ref{sec:3.4}), so that we can focus on the linear case
{
$$
  %%v_t -\frac{1}{2}\trace[\Sigma(t,x,a)D_x^2 v] + b(t,x,a)D_x v+r(t,x,a) v +\ell(t,x,a)=0,
  v_t -\frac{1}{2}\trace[\Sigma(t,x)D_x^2 v] + b(t,x)D_x v+r(t,x) v +\ell(t,x)=0
$$
}
for a positive definite matrix $\Sigma$ and a drift vector $b$.
%with $a\in \Lambda$ given, and we will drop it in the following.
For simplicity, we furthermore consider the two-dimensional case $d=2$, with $r,\ell\equiv 0$, 
and omit the dependence of the coefficients {on the time variable}, then with %Given  defined as
$$
  \Sigma(x,y) :=\left(\begin{array}{cc}
  \sigma^2_1(x,y) & \rho \sigma_1\sigma_2(x,y)\\
  \rho \sigma_1\sigma_2(x,y) &  \sigma_2^2(x,y)
  \end{array}\right)
 \quad\text{and}\quad 
  b(x,y): =\left(\begin{array}{c}
  b_1(x,y)\\
  b_2(x,y)
\end{array}\right),
$$ 
where  $\ms_1,\ms_2\geq 0$ and $\rho\in[-1,1]$ is the correlation parameter, the equation reads
$$
  v_t - \frac{1}{2}\sigma_1^2(x,y)v_{xx} -\rho \sigma_1\sigma_2(x,y)v_{xy}-\frac{1}{2}\sigma_2^2(x,y)v_{yy} + b_1(x,y)v_x + b_2(x,y)v_y = 0.
$$
The computational domain is given by $\Omega:=(x_{\min},x_{\max})\times (y_{\min},y_{\max})$.
We introduce the discretization in space defined by the steps $h_x, h_y>0$ and we denote by $\mathcal G_{(h_x,h_y)}$ the associated mesh. 
In what follows, given any function $\phi$ of $(x,y)\in \Omega$, we will denote $\phi_{ij}=\phi(x_i,y_j)$ for {$(i,j)\in\mathbb I:=\mathbb I_1\times\mathbb I_2$, where  $\mathbb I_1=\{1, \ldots, I_1\}$, $\mathbb I_2=\{1, \ldots, I_2\}$}.

Assuming that $\rho\ge 0$ everywhere (the case when $\rho\leq 0$ is similar),
we consider a 7-point stencil for the second order derivatives (see \cite[Section 5.1.4]{HackbuschBook}):
\begin{align*}
 v_{xx}\sim \frac{v_{i-1,j}-2v_{ij}+v_{i+1,j}}{h_x^2}=:D^2_{xx}v_{ij},\quad  \quad v_{yy}\sim \frac{v_{i,j-1}-2v_{ij}+v_{i,j+1}}{h_y^2}=:D^2_{yy}v_{ij}\\
 v_{xy} \sim \frac{-v_{i,j-1}-v_{i,j+1}-v_{i-1,j}-v_{i+1,j}+v_{i-1,j-1}+v_{i+1,j+1}+2v_{ij}}{2 h_x h_y}=:D^2_{xy}v_{ij}
 \end{align*}
 and the BDF approximation of the first order derivatives
\begin{align*}
D^{1,-}_x u_{ij}:= \frac{3 u_{ij} - 4 u_{i-1,j} + u_{i-2,j}}{2 h_x}
   \quad \mbox{and} \quad 
   D^{1,+}_x u_{ij}:= -\bigg(\frac{3 u_{ij} - 4 u_{i+1,j} + u_{i+2,j}}{2h_x}\bigg),\\
D^{1,-}_y u_{ij}:= \frac{3 u_{ij} - 4 u_{i,j-1} + u_{i,j-2}}{2 h_y}
   \quad \mbox{and} \quad 
   D^{1,+}_y u_{ij}:= -\bigg(\frac{3 u_{ij} - 4 u_{i,j+1} + u_{i,j+2}}{2h_y}\bigg).
 \end{align*}
The scheme is therefore defined, for $k\geq 2$, by
%\begin{align}
\be
  \label{eq:scheme2d}
  & & 0 \ =\ \frac{u^k_{ij}-4u^{k-1}_{ij}+u^{k-2}_{ij}}{2\tau} 
  %\nonumber
  \\ 
  & & \hspace{0.5cm} 
       -  \frac{1}{2} \ms_1^2    (x_i,y_j)D^2_{xx}u^k_{ij} 
       -\rho          \ms_1\ms_2 (x_i,y_j)D^2_{xy}u^k_{ij}
       -  \frac{1}{2} \ms_2^2    (x_i,y_j)D^2_{yy}u^k_{ij}\nonumber\\
  & & \hspace{0.5cm}
        + b^+_1(x_i,y_j) D^{1,-}_x u^{k}_{ij} -  b_1^-(x_i,y_j) D^{1,+}_x u^{k}_{ij}  
  %\nonumber\\
  %& & \hspace{1cm}
	+ b^+_2(x_i,y_j) D^{1,-}_y u^{k}_{ij} -  b_2^-(x_i,y_j) D^{1,+}_y u^{k}_{ij}.
  %  \ =\ 0.
  \nonumber
\ee
%\end{align}
A straightforward calculation shows that the second order term also reads
\begin{align}
\nonumber
& \sigma^2_1(x_i,y_j)D^2_{xx}u_{ij} +2\rho \sigma_1\sigma_2(x_i,y_j) D^2_{xy}u_{ij}+\sigma^2_2(x_i,y_j)D^2_{yy}u_{ij}\\
&\hspace{2.5cm} =
   \alpha_{ij} D^2_{xx}u_{ij}  %\left(u_{i-1,j}-2u_{ij}+u_{i+1,j}\right)
 +  \beta_{ij} D^2_{yy}u_{ij}   %\left(u_{i,j-1}-2u_{ij}+u_{i,j+1}\right)+
 + \gamma_{ij}\left(u_{i-1,j-1}-2u_{ij}+u_{i+1,j+1}\right),
 \label{abc}
\end{align}
with %$\alpha,\beta,\gamma$ defined by
\begin{align*}
%\hspace{-1cm}
  \alpha_{ij} & :=\frac{\sigma_1(x_i,y_j)}{h_x}\left(\frac{\sigma_1(x_i,y_j)}{h_x}-\frac{\rho\sigma_2(x_i,y_j)}{h_y}\right), \\
  \beta_{ij}  & :=\frac{\sigma_2(x_i,y_j)}{h_y}\left(\frac{\sigma_2(x_i,y_j)}{h_y}-\frac{\rho\sigma_1(x_i,y_j)}{h_x}\right), \qquad 
  \gamma_{ij} :=\frac{\rho(x_i,y_j)\sigma_1(x_i,y_j)\sigma_2(x_i,y_j)}{h_y h_x}.
\end{align*}
{The scheme is completed with the following boundary conditions: 
\begin{align*}
u^k_{i,j} = v(t_k,x_i, y_j),& \quad \forall i\in\{-1,0\}\cup \{I_1+1,I_1+2\}, \; j\in\mathbb I_2, \\
u^k_{i,j} = v(t_k,x_i, y_j),& \quad \forall j\in\{-1,0\}\cup \{I_2+1,I_2+2\}, \; i\in\mathbb I_1.
\end{align*}
}

For simplicity, assume $h_x=h_y=: h$.
%(in this case, we write $\mathcal G_{(h_x,h_y)}=\mathcal G_h$). We consider the following assumptions:
We consider the following assumptions:

\medskip
\noindent
%{\sc Assumption (A1'):}
{\sc Assumptions} \\
{\sc (A1'):}
  $\|b_i\|_\infty<\infty$ for $i=1,2$;\\
%\medskip
%{\sc Assumption (A2'):}
{\sc (A2'):}
$
  \mbox{$\exists \eta>0$, $\forall (x,y)\in\mO$, $\forall i\neq j$:
     $\sigma_i^2(x,y)-\rho(x,y)\sigma_i(x,y)\sigma_j(x,y)\geq \eta$};
$ \\
%\medskip
%{\sc Assumption (A3').}
%\MODIFO{
{\sc (A3'):}
$\forall i,j=1,2$, $\ms_i\ms_j$ is Lipschitz continuous on $\mO$.
%}

% \begin{itemize}
% \item[\bf (A1')]
% $\|b_i\|_\infty<\infty$ for $i=1,2$;
% \item[\bf (A2')]
% $ \exists \eta>0 : \sigma_i^2(x,y)-\rho\sigma_i(x,y)\sigma_j(x,y)\geq \eta \quad\forall (x,y)\in\R^2, i,j=1,2\ (i\neq j);$
% \item[\bf (A3')]
% $\exists L\geq 0$, 
%   |\sigma_i(x,y)\sigma_j(x,y) -\sigma_i(\bar x,\bar y)\sigma_j(\bar x,\bar y)|\leq L(|x-\bar x|+|y-\bar y|)$, \\ $\forall (x,y),(\bar x,\bar y)\in \Omega2, i,j=1,2.$
% \end{itemize}

\medskip

We then have the following result.
The proof is similar to the one of Theorem~\ref{th:L2stab}, using
(\ref{abc}) with $\alpha_{ij}, \beta_{ij}, \gamma_{ij} \ge 0$ by assumption (A2'),
 {and is therefore omitted}.

\begin{prop}\label{prop:L2_2d}
Let assumptions (A1'),(A2') and (A3') be satisfied. Then the stability estimate \eqref{eq:stabnormL2} holds for $|\cdot|=\|\cdot\|$.
\end{prop}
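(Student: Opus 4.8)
The plan is to reproduce the argument of Theorem~\ref{th:L2stab} almost verbatim on the two-dimensional grid, the only genuinely new ingredient being the treatment of the mixed second difference $D^2_{xy}$. Having reduced to the linear case as announced, I would first form the error recursion obtained by subtracting the consistency relation for the exact solution from the scheme \eqref{eq:scheme2d}, and then take its Euclidean scalar product with $E^k$ (rather than with $AE^k$). The time-difference term $\langle E^k, 3E^k - 4E^{k-1}+E^{k-2}\rangle$ is purely norm-based, hence its lower bound is identical to \eqref{eq:timeL2} and already matches the left-hand side of the target inequality \eqref{eq:one_step}. It therefore only remains to bound the contribution of the spatial operator from below by $-C\|E^k\|^2$ (modulo a nonnegative energy), after which \eqref{eq:one_step} holds for $|\cdot|=\|\cdot\|$ and Lemma~\ref{lem:stability} delivers \eqref{eq:stabnormL2}.

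For the second-order part I would rewrite it through the identity \eqref{abc} as $-\tfrac12(\alpha_{ij}D^2_{xx}+\beta_{ij}D^2_{yy}+\gamma_{ij}D^2_{d})E^k$, where $D^2_d E_{ij}:=E_{i-1,j-1}-2E_{ij}+E_{i+1,j+1}$ is the second difference along the diagonal $(1,1)$. Each of the three operators is a one-directional discrete second derivative with a spatially varying, nonnegative coefficient, since $\alpha_{ij},\beta_{ij},\gamma_{ij}\ge0$ by (A2'). Performing a discrete summation by parts in each direction, exactly as in the derivation of \eqref{eq:L2est-diff}, produces for each a nonnegative energy $\tfrac12\sum(\text{coefficient})(\text{directional difference})^2$ plus a correction generated by the variation of the coefficient between neighbouring nodes. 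By (A2') one has $\alpha_{ij},\beta_{ij}\ge \eta/h^2$, so the coordinate energies are coercive and bounded below by $\tfrac{\eta}{2h^2}\big(\|\delta_x E^k\|^2+\|\delta_y E^k\|^2\big)$, where $\delta_x,\delta_y$ denote the coordinate differences; the diagonal energy is merely nonnegative and may be discarded. The correction terms are all of the form $\tfrac{C}{h}\|\delta_\ast E^k\|\,\|E^k\|$, with $C$ controlled by the Lipschitz constants of $\alpha,\beta,\gamma$, finite by (A3').

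The first-order part splits into an $x$- and a $y$-component, each a one-sided BDF2 difference with a bounded coefficient, so repeating \eqref{eq:L2est-drift} in each direction (using (A1')) gives the lower bound $-\tfrac{2\|b_1\|_\infty}{h}\|\delta_x E^k\|\,\|E^k\|-\tfrac{2\|b_2\|_\infty}{h}\|\delta_y E^k\|\,\|E^k\|$. One then applies Young's inequality to every cross term $\tfrac{C}{h}\|\delta_\ast E^k\|\,\|E^k\|$, absorbing the difference-norm factors into the coercive energies and retaining only a remainder $-C\|E^k\|^2$, which yields \eqref{eq:one_step}. I expect the main obstacle to be precisely the mixed term: because $\gamma_{ij}$ may vanish (e.g.\ when $\rho=0$ or a $\sigma_i$ degenerates), its own diagonal energy cannot absorb its Lipschitz correction, which lives on the diagonal differences $\delta_d E^k$. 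The remedy is the pointwise bound $|E_{i+1,j+1}-E_{ij}|\le |E_{i+1,j+1}-E_{i,j+1}|+|E_{i,j+1}-E_{ij}|$, so that $\|\delta_d E^k\|$ is controlled by $\|\delta_x E^k\|+\|\delta_y E^k\|$ (the index shifts being harmless thanks to the zero boundary padding) and the diagonal correction is absorbed by the non-degenerate coordinate energies guaranteed by (A2'). With \eqref{eq:one_step} established for the Euclidean norm, Lemma~\ref{lem:stability} completes the proof.
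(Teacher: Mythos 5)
Your proposal is correct and follows exactly the route the paper intends: it reduces to the linear case, takes the scalar product with $E^k$, and uses the decomposition \eqref{abc} with $\alpha_{ij},\beta_{ij},\gamma_{ij}\ge 0$ from (A2'), which is precisely the (omitted) proof sketched after the proposition. You also correctly supply the one detail the paper leaves implicit, namely that since $\gamma_{ij}$ may vanish, the Lipschitz correction on the diagonal differences $\delta_d E^k$ must be absorbed by the coercive coordinate energies via $\|\delta_d E^k\|\le \|\delta_x E^k\|+\|\delta_y E^k\|$, which is exactly the right fix.
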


%\todo{Olivier: I have regrouped remarks and simplified the last one on diag. dominance - if you agree ?!!}

\begin{rem}
$(i)$ 
If $h_x\neq h_y$ and for instance $h_y=C h_x$ for some $C\geq 1$,
{(A2') has to hold with $\sigma_2$ replaced by $\sigma_2/C$ as a result of the scaling properties of the scheme.}
%\end{rem}

%\begin{rem}
$(ii)$ 
Observe that  assumption (A2') is equivalent to requiring strong diagonal dominance of the covariance matrix.
%\end{rem}

%\begin{rem}
$(iii)$ 
When the strong diagonal dominance of the matrix $\Sigma$ is not guaranteed, one can consider the generalized finite difference scheme in \cite{BZ03}. {%\color{blue} 
However, determining the precise set of assumptions on the coefficients needed to apply the previous arguments does not seem easy from the construction in \cite{BZ03}.}

\if{
Given a set of directions $\mathcal S^k_{ij} \subset \mathbb{N}\times \mathbb{N}$ for each triplet $(t_k,x_i,y_j)$,
an approximation of the second order term is defined by 
$$
\frac{1}{2}\trace[\Sigma(t_k,x_i,y_j)D^2v]\sim\sum_{\xi\in\cS^k_{i,j}} \alpha^\xi_{i,j} D^2_\xi v_{ij},
$$
where
$$
D^2_\xi u_{ij} := u_{i+\xi_1,j+\xi_2} -2 u_{ij} + u_{i-\xi_1,j-\xi_2}
$$
for given $\xi\equiv(\xi_1,\xi_2)$ and $\alpha^\xi_{i,j}\ge 0$ are chosen to make the scheme consistent.
In the case of the seven-point stencil studied above, $\cS^k_{ij}=\{(1,0),(0,1),(1,1)\}$ $\forall (i,j)\in\mathbb I,k\geq 1$  and $\alpha^{(1,0)}=\alpha$, $\alpha^{(0,1)}=\beta, \alpha^{(1,1)}= \gamma$. In order to apply directly the same argument as above, we would need all coefficients $\alpha^\xi$ to be Lipschitz continuous and non-degenerate (to compensate the drift term). This requirement does not seem easy to verify generically from the construction in \cite{BZ03}.

{We also point out that  same techniques can be used to  extended the results to higher dimension under some suitable choice  of the finite difference approximation of the second order term.}
}\fi
\end{rem}

\section{Error estimates}\label{sec:error}
%-------------------------------------------
%{\color{violet}\fbox{OLIVIER EN COURS}}\\
%\vio{\fbox{Careful with the norms: $(\|\cdot\|,|\cdot|_A) \text{(discrete)} = h^{-1/2} (L^2, H^2)\text{-norm} \text{ (continuous)}$}}

In this section, we give detailed error estimates for the implicit BDF2 scheme \eqref{eq:BDF2}.
We consider the following rescaled norms on $\R^I$: %versions of the previous norms for $u\in \mathbb{R}^I$:
\begin{eqnarray*}
|u|_0 := \left(\sum_{i\in \mathbb{I}} u_i^2 \, h \right)^{1/2} = \, \|u\| \sqrt{h}, \qquad
|u|_1 := \left(\sum_{i\in \mathbb{I}} \left(\frac{u_i-u_{i-1}}{h} \right)^2 h \right)^{1/2} = \, |u|_A \sqrt{h},
\end{eqnarray*}
corresponding to discrete approximations of $L^2(\mO)$- and $H^1(\mO)$ norms, respectively.
%\cblue
{%The $|\cdot|_1$ norm 
Both these norms
will be used in the forthcoming numerical section.}
%\medskip
%% -----------------------------
%% OLD (A5)
%% -----------------------------
\if{
{\sc Assumption (A5).}
The function $v$ belongs  to $C^{1+\delta, 2+\delta}$ for some $\delta \in (0,1]$ in the following sense:
$v$ is in $C^{1,2}([0,T]\times \bar\mO,\R)$ and $\exists L>0\ \forall x,y\in\Omega$, $\forall s,t\in [0,T]$
\begin{eqnarray*}
  |v_t (t,x) - v_t (s,x)| + |(D^2_x v) (t,x) - (D^2_x v) (t,y)| 
  \leq L \left( |t-s|^\delta  +  |x-y|^\delta \right).
\end{eqnarray*}
}\fi

In addition, we define the following semi-norm on some interval $\mathcal{I}=(a,b)$:
$$ 
  |w|_{C^{0,\ma}(\mathcal{I})}:= \sup\bigg\{ \frac{|w(x) - w(y)|}{|x-y|^\ma}, \ x\neq y, \ x,y\in \mathcal{I} \bigg\}.
$$
% We also define $W^{k,\ell}_\infty((0,T)\times \Omega))$ as the set of functions $v\in L^\infty((0,T)\times \Omega)$ such that the distributional derivatives
% $(\frac{\partial^{i}v}{\partial t^i})_{0\leq i\leq k}$ and 
% $(\frac{\partial^{j}v}{\partial x^j})_{0\leq j\leq \ell}$ 
% belongs to $L^\infty((0,T)\times\Omega)$.

%\MODIFO{
For a given open subset $\mO_T^*$ of $(0,T)\times \Omega$, 
we define $C^{k,\ell}(\mO_T^*)$ 
as the set of functions $v:\mO_T^*\conv \R$ which admit continuous derivatives
$(\frac{\partial^{i}v}{\partial t^i})_{0\leq i\leq k}$ and 
$(\frac{\partial^{j}v}{\partial x^j})_{0\leq j\leq \ell}$ 
on $\mO_T^*$.
We also denote by $C^{k,\ell}_b(\mO_T^*)$ the subset of functions with bounded derivatives on $\mO_T^*$.

\medskip 
{\sc Assumption (A5).}
%The function $v$ belongs to $C^{1,2}((0,T)\times\Omega)$ and satisfies,
$v \in C^{1,2}((0,T)\times\Omega)$ and
for some constant $C\geq 0$:
%(i.e.\ $v,v_x,v_{xx},v_t \in L^\infty((0,T)\times\Omega)$);
% \begin{itemize}
% \item[$(i)$]
%  $v \in W^{1,2}_{\infty}((0,T)\times\Omega)$ (i.e.\ $v,v_x,v_{xx},v_t \in L^\infty((0,T)\times\Omega)$);
%  \todo[inline]{Check where needed.}
% \item[$(ii)$] 
%  the following H\"older bounds hold for some $\delta \in (0,1]$:
\be
  \sup_{x\in\mO} \|v_t(.,x)\|_{C^{0,\delta}([0,T])} \leq C,
  \qquad
  \sup_{t\in (0,T)} \|v_{xx}(t,.)\|_{C^{0,\delta}(\bar\mO)} \leq C.
\ee
%}

% The function $v$ satisfies
% \begin{itemize}
% \item[$(i)$]
%  $v \in W^{1,2}_{\infty}((0,T)\times\Omega)$ (i.e.\ $v,v_x,v_{xx},v_t \in L^\infty((0,T)\times\Omega)$);
%  \todo[inline]{Check where needed.}
% \item[$(ii)$] 
%  the following H\"older bounds hold for some $\delta \in (0,1]$:
% \begin{eqnarray*}
%  \sup_{x\in\mO} \|v_t(.,x)\|_{C^{0,\delta}([0,T])} \leq C,
%  \qquad
%  \sup_{t\in (0,T)} \|v_{xx}(t,.)\|_{C^{0,\delta}(\bar\mO)} \leq C
% \end{eqnarray*}
% for some constant $C\geq 0$.
% \end{itemize}

%\MODIF{
\begin{rem}
\label{rem:regular}
By results in \cite{eva-len-81} and \cite{krylov1982boundedly},
assumption (A5) is satisfied 
for sufficiently smooth data and given a uniform ellipticity condition.
%According to \cite{krylov1982boundedly}, 
%for sufficiently smooth data and given a uniform ellipticity condition,
%the solution belongs to $C^{2+\delta}$ in space in the interior of the domain.
%%We require here regularity up to the boundary.
%\todo[inline]{(O) revoir - What I believe now is that $\sup_{x} \|u_t(x,.)\|_{C^{0,\ma}}\leq C$ 
%(should be ok by a result of Evans and Lenhart \cite{eva-len-81},
%and that  $\sup_{t} \|u_{xx}(.,t)\|_{C^{0,\ma}}\leq C$  (REF KRYLOV) even though $u$ is not in $C^{1,2}$
%as given by counter example of Caffarelli and Stefanelli \cite{caf-ste-08}
%Then this would be enough for us to conclude and not to use the $C^{1+\delta,2+\delta}$ assumption that may be false
%even in our simple numerical example no 2.} 
\end{rem}
%}
%{\color{red} (It seems to me the regularity assumption in (A5) is now ok.\\
%Below, I separated the piecewise smooth case by the others.)}
We have the following error estimates:

\begin{theorem}\label{th:errors}
We assume (A1), (A2), (A3), and the CFL condition \eqref{eq:CFL}.
\begin{enumerate}
\item[$(i)$]
%\label{item_ii}
If %\MODIFO
{$v\in C^{3,4}_b((0,T)\times \mO)$,} then
\begin{eqnarray*}
\max_{0\leq k\leq N} |v^k-u^k|_0 \le
  % (\tau^2 + h^2),
  C h^2,
\end{eqnarray*}
where $C$ is a constant which depends on the derivatives of $v$ of order 3 and 4 in $t$ and $x$, respectively.
\item[$(ii)$]
%\label{item_i}
If (A5) holds for some $\delta \in (0,1]$, then 
the numerical solution $u$ of (\ref{eq:BDF2}), (\ref{eq:IEscheme}) converges to $v$ in the $L^2$-norm with
\begin{eqnarray*}
  \max_{0\leq k\leq N}|v^k-u^k|_0 \le
  %C (\tau^\delta + h^\delta) h^{1/2},
  C h^{\delta},
\end{eqnarray*}
%where $C$ is a constant which depends only on the coefficients of the equation.
%where $C$ is a constant which depends only  on the derivatives of the exact solution $v$.
for some constant $C$ (possibly different from the one in (A5)).
\end{enumerate}
\end{theorem}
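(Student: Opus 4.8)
The plan is to combine the stability estimate of Theorem~\ref{th:L2stab} with a bound on the consistency error $\Eps^k(v)$, since the stated result is essentially a corollary of those two ingredients. Under the hypotheses of Theorem~\ref{th:errors}, assumptions (A1), (A2), (A3) and the CFL condition hold, so \eqref{eq:main-errorL2} is available and gives, after multiplying by $h$ to pass to the rescaled norm $|\cdot|_0$,
\begin{eqnarray*}
  \max_{2\leq k\leq N}|E^k|_0^2 \ \leq \ C\Big(|E^0|_0^2 + |E^1|_0^2 + \tau\sum_{2\leq k\leq N}|\Eps^k(v)|_0^2\Big).
\end{eqnarray*}
The entire task then reduces to estimating the three terms on the right-hand side separately for each of the two regularity regimes.

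For part $(i)$, the first step is to bound the consistency error. Since $v\in C^{3,4}_b$, a Taylor expansion of the BDF2 time difference and of the spatial finite-difference operators $D^2$, $D^{1,\pm}$ yields the pointwise bound \eqref{eq:consist-1}, i.e.\ $|\Eps^k_i(v)|\leq c_1\tau^2 + c_2 h^2$ with constants controlled by the third $t$-derivative and the third and fourth $x$-derivatives of $v$. Summing in $i$ (with the $h$ weight) and in $k$ (with the $\tau$ weight, using $N\tau = T$) gives $\tau\sum_k|\Eps^k(v)|_0^2 \leq C(\tau^2+h^2)^2$. The second step is to bound the starting errors $E^0$ and $E^1$. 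Here $E^0=0$ by the initial condition, and $E^1$ is controlled by the truncation error of the backward Euler step \eqref{eq:IEscheme}, which is $O(\tau)$ per unit time but — because it is applied only once — contributes an error of order $\tau^2$ to $|E^1|_0$; I would make this precise by a short one-step stability argument for the implicit Euler scheme, or simply cite the smoothness of $v$ to get $|E^1|_0\leq C\tau^2$. Feeding these into the stability bound and taking the square root gives $\max_k|E^k|_0\leq C(\tau^2+h^2)$, which under the natural coupling (or simply by $\tau^2 \le C h^2$ implied by the CFL relation $\|b\|_\infty\tau/h<C$ when $h$ is bounded) yields the stated $O(h^2)$ estimate.

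For part $(ii)$, the structure is identical but the consistency estimate is weaker because we only control $v_t$ and $v_{xx}$ in a H\"older sense. The key step here is to re-examine the Taylor expansions: the BDF2 time difference applied to $v$ produces a remainder of the form $\tfrac{1}{2\tau}\big(\text{differences of }v_t\big)$, which by the $C^{0,\delta}$ bound on $v_t$ from (A5) is $O(\tau^\delta)$; similarly the second spatial difference $D^2 v$ differs from $v_{xx}$ by a term controlled by $\|v_{xx}\|_{C^{0,\delta}}$, giving $O(h^\delta)$, and the one-sided BDF spatial first differences need an analogous H\"older-continuity treatment of $v_x$ (which follows from the regularity assumed, possibly after noting $v_x$ inherits H\"older control from $v_{xx}\in C^{0,\delta}$). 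Thus $|\Eps^k(v)|_0\leq C(\tau^\delta+h^\delta)$, and the start-up term $|E^1|_0$ must likewise be re-estimated under the reduced regularity to be $O(\tau^\delta)$ or better. Summation and the stability lemma then deliver $\max_k|E^k|_0\leq C(\tau^\delta+h^\delta)\leq Ch^\delta$ under the CFL coupling.

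The main obstacle is the consistency analysis in part $(ii)$: with only H\"older regularity of $v_t$ and $v_{xx}$, one cannot perform the clean Taylor expansion used in $(i)$, and care is needed to show that each finite-difference operator approximates its continuous counterpart with a defect of exactly order $\delta$ rather than a worse power of $\tau$ or $h$. In particular, the BDF2 \emph{time} stencil involves three time levels, so one must verify that the leading $O(\tau)$ and $O(\tau^2)$ terms cancel by the second-order accuracy of the formula and that the genuine remainder is governed by the modulus of continuity of $v_t$, yielding $O(\tau^{1+\delta})$ locally and hence $O(\tau^\delta)$ after division by $\tau$; an analogous cancellation must be checked for the spatial one-sided differences. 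The secondary technical point is the treatment of the single backward Euler step: one must argue that its lower (first) order does not degrade the global rate, which is the content of the remark following \eqref{eq:BDF2-step1} and which I would make rigorous by absorbing $|E^1|_0$ into the right-hand side of the stability estimate at the correct order.
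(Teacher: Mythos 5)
Your proposal is correct and follows essentially the same route as the paper: the stability estimate of Theorem~\ref{th:L2stab} plus Taylor-based consistency bounds (of order $\tau^2+h^2$ under $C^{3,4}_b$ regularity, of order $\tau^\delta+h^\delta$ under (A5)), a one-step implicit-Euler stability argument giving $\|E^1\|\le C\tau(\tau+h^2)$ with $E^0=0$, and the CFL condition to convert $\tau$ into $h$. The one cancellation you flag as delicate in part $(ii)$ is handled in the paper more simply than you anticipate: writing the BDF2 time quotient via the mean value theorem as $\tfrac32 v_t(t-\theta_1\tau,x)-\tfrac12 v_t(t-(1+\theta_2)\tau,x)$ (weights summing to one) reduces the defect directly to H\"older increments of $v_t$ over intervals of length at most $2\tau$, yielding $O(\tau^\delta)$ without any expansion beyond first order.
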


%\todo[inline]{(O): [Rem] because of CFL condition there is no need to keep the $\tau^2$ factor in Th.}

\noindent
\begin{proof}
We first prove $(ii)$.
By Taylor expansion, we can write for instance, for some $\theta_1,\theta_2 \in [0,1]$,
\begin{eqnarray*}
\left\vert v_t(t,x) - \frac{v(t,x) - v(t-\tau,x)}{\tau} \right\vert \ = \ \left\vert v_t(t,x) - v_t(t-\theta_1 \tau ,x) \right\vert
&\le& C \tau^\delta
\end{eqnarray*}
and
\begin{eqnarray*}
 \left\vert v_t(t,x) - \frac{3 v(t,x) - 4 v(t-\tau,x) + v(t-2\tau,x)}{2 \tau} \right\vert && \\
 &&\hspace{-5 cm} \le \ \left\vert v_t(t,x) - \frac{1}{2} \left( 3 v_t(t-\theta_1 \tau ,x) - v_t(t-(1+\theta_2)\tau ,x) \right) \right\vert \\
 &&\hspace{-5 cm} \le \ \left\vert v_t(t,x) - v_t(t-\theta_1 \tau ,x) \right\vert
 + \frac{1}{2} \left\vert v_t(t-\theta_1 \tau ,x) - v_t(t-(1+\theta_2)\tau ,x) \right\vert \\
 %&&\hspace{-5 cm} \le \ 2 L \tau^\delta.
 &&\hspace{-5 cm} \le \  C \tau^\delta + \fud C (2\tau)^\delta \leq 2 C \tau^\delta.
\end{eqnarray*}
Similarly, using the higher spatial regularity, there exists a constant $C_0\geq 0$ such that 
\begin{eqnarray*}
\left\vert v_x(t,x) - \frac{3 v(t,x) - 4 v(t,x-h) + v(t,x-2h)}{2 h} \right\vert &\le& C_0 C h^{\delta +1}, \\
\left\vert v_{xx}(t,x) \, - \ \frac{v(t,x+h) - 2 v(t,x) + v(t,x-h)}{h^2} \right\vert &\le& C_0 C h^{\delta}.
\end{eqnarray*}
The result $(ii)$ now follows directly by inserting the obtained truncation error into the stability estimate 
of Theorem~\ref{th:L2stab}. %\eqref{eq:errorL2}.

For the proof of $(i)$ (smooth case),
expansion up to order 3 and 4 gives the truncation error of higher order for $k\ge 2$, 
and we use the fact that the error from the first backward Euler step is bounded by
$\|E^1\| \le C \tau (\tau + h^2)$; in particular, we use that $(E^1-E^0)/\tau + (\Delta^1A + F^1 B +R^1) E^1 = - \Eps^1$, with $\| \Eps^1\|\leq C(\tau + h^2)$, $E^0=0$ 
and the bound is otherwise similar and simpler than that for $k\geq 2$.
\end{proof}

The previous arguments can also be used to derive error estimates for piecewise smooth solutions.  
In this case, we will need to limit the number of non-regular points that may appear in the exact solution
(assumption (A6)$(i)$ is similar to \cite{bokanowski-debrabant}).

%\todo[inline]{Unify $C$ versus $W_\infty$.}

\medskip
{\sc Assumption (A6).}
There exists an integer $p\geq 1$ and functions $(x^*_j(t))_{1\leq j\leq p}$ for $t\in[0,T]$, such that, with
$\mO^*_T:= (\mO\times (0,T))\backslash \bigcup_{1\leq j\leq p} \{(t,x^*_j(t)),\ t\in(0,T)\}$,
the following holds:
%the singular points of $x\conv v (t,x)$ are located at most at $p$  time-dependant positions 
\begin{enumerate}
\item[$(i)$]
% There exists some integer $p\geq 0$, such that, for any $t\in[0,T]$, the singular points of $x\conv v (t,x)$ are located at most at $p$  time-dependant positions 
% $(t,x^*_j(t))_{1\leq j\leq p}$.
%\MODIFO
{$v \in C^{3,4}_b(\mO^*_T)$;}
%$v \in W^{3,4}_\infty(\mO^*_T)$, and
%\beno
%   \|v_{t}\|_{L^\infty(\mO^*_T)} \leq C_1, \quad \|v_{2x}\|_{L^\infty(\mO^*_T)} \leq C_2, \\
%   \|v_{3t}\|_{L^\infty(\mO^*_T)} \leq C_3,\quad  \|v_{4x}\|_{L^\infty(\mO^*_T)} \leq C_4.
%\eeno
\item[$(ii)$]
   $\forall j$, $t\conv x^*_j(t)$ is Lipschitz regular.
\end{enumerate}

\medskip

We give the following straightforward preliminary result without proof:

\begin{lemma}\label{lem:limited-singularities}
Assume (A6) and the CFL condition \eqref{eq:CFL}. Then for all $t$
$$
  %\forall t, \quad 
  \mbox{Card} \{ j, \  x \conv v(t,x)\ \mbox{not regular in $[x_{j-2},x_{j+2}]$}\} \leq 5p
$$
and
$$
  %\forall t, \quad 
  \mbox{Card} \{ j, \  \mt \conv v(\mt,x_j)\ \mbox{not regular in $[t-2\tau,t]$} \} \leq Cp.
$$
for some constant $C\geq 0$ independent of $\tau,h$
("not regular" meaning not $C^4$ in the first case and not $C^3$ in the second one).
\end{lemma}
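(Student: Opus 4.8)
The plan is to treat both bounds as elementary counting arguments over the mesh, exploiting the fact that by (A6)$(i)$ the solution $v$ is $C^{3,4}_b$ away from the union of the $p$ Lipschitz curves $\{x = x^*_j(\mt)\}$. The key observation is that a failure of regularity of $v$ on a small space interval (of width $4h$) or time interval (of width $2\tau$) can only occur if that interval meets the singular set; so in each case I would count the number of mesh indices $j$ whose associated interval intersects one of the $p$ curves.

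For the spatial bound, I would fix $t$ and note that the singular locations in space are exactly the (at most $p$) points $x^*_i(t)$. By (A6)$(i)$, $x\mapsto v(t,x)$ is $C^4$ on any subinterval of $\mO$ containing none of these points; hence it can fail to be $C^4$ on $[x_{j-2},x_{j+2}]$ only if $x^*_i(t)\in[x_{j-2},x_{j+2}]$ for some $i$. For a fixed singular point $x^*=x^*_i(t)$, the inclusion $x^*\in[x_{j-2},x_{j+2}]$ reads $(j-2)h\le x^*-x_{\min}\le (j+2)h$, i.e.\ $j$ lies in an interval of length $4$, so at most $5$ integers $j$ qualify. Summing over the at most $p$ points gives the bound $5p$.

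For the temporal bound, I would fix $t$ and a mesh point $x_j$, and observe that by (A6)$(i)$ the map $\mt\mapsto v(\mt,x_j)$ is $C^3$ on $[t-2\tau,t]$ as long as the segment $\{(\mt,x_j):\mt\in[t-2\tau,t]\}$ avoids the singular set, i.e.\ as long as $x^*_i(\mt)\neq x_j$ for all $i$ and all $\mt\in[t-2\tau,t]$. By (A6)$(ii)$, each $x^*_i$ is Lipschitz with some constant $L^*$, so over the time window of length $2\tau$ its image $\{x^*_i(\mt):\mt\in[t-2\tau,t]\}$ is contained in a spatial interval of length at most $2\tau L^*$. The number of mesh points in an interval of length $2\tau L^*$ is at most $2\tau L^*/h+1$, and by the CFL condition \eqref{eq:CFL} the ratio $\tau/h$ is bounded above by a constant $C_0$ independent of $\tau$ and $h$; hence at most $C':=2L^*C_0+1$ indices $j$ are affected by a single curve, and summing over the $p$ curves yields $Cp$ with $C=C'$. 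Since none of the counting constants depend on $t$, both bounds hold for every $t$.

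The main obstacle, such as it is, lies entirely in the temporal estimate: the spatial count is immediate from the uniform mesh geometry, whereas the temporal count is where the two nontrivial hypotheses genuinely enter — the Lipschitz regularity (A6)$(ii)$ to control the spatial displacement of a singular curve over a window of size $2\tau$, and the CFL condition \eqref{eq:CFL} to convert the resulting factor $\tau/h$ into an $O(1)$ constant. Everything else is bookkeeping, which is why the result can reasonably be stated without a detailed proof.
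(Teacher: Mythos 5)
Your proof is correct, and it is precisely the ``straightforward'' argument the paper had in mind: the authors explicitly state the lemma without proof, and your two counting steps (at most $5$ integers $j$ with $x^*\in[x_{j-2},x_{j+2}]$ per singular point, and at most $2L^*\tau/h+1$ mesh points swept by a Lipschitz curve over a window of length $2\tau$) fill that gap exactly, including the correct identification of where (A6)$(ii)$ and the CFL condition enter. The only pedantic caveat is that \eqref{eq:CFL} as literally written, $\|b\|_\infty\tau/h<C$, bounds $\tau/h$ only when $\|b\|_\infty>0$; since the paper itself uses \eqref{eq:CFL} in the same way (e.g.\ deducing $\tau=O(h)$ in the proof of Theorem \ref{th:errors_piecewise}), your reading is the intended one.
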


Such a situation will be illustrated in the numerical example of Section~\ref{sec:num2}.

\begin{theorem}\label{th:errors_piecewise}
We assume (A1), (A2), (A3) and the CFL condition \eqref{eq:CFL}.
Let (A5) and (A6) hold for some $\delta \in (0,1]$, then 
the numerical solution $u$ of (\ref{eq:BDF2}), (\ref{eq:IEscheme}) converges to $v$ in the $L^2$-norm with
\begin{eqnarray*}
  \max_{2\leq k\leq N}|v^k-u^k|_0 \le
  %C (\tau^\delta + h^\delta) h^{1/2},
  C h^{1/2+\delta},
\end{eqnarray*}
%where $C$ is a constant which depends only on the coefficients of the equation.
where $C$ is a constant 
independent of $h$.
%which depends only on $p$ and on the derivatives of $v$.
\end{theorem}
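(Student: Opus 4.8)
The plan is to insert the truncation error into the Euclidean stability estimate of Theorem~\ref{th:L2stab}, rescaled into the $|\cdot|_0$ norm, and then to exploit the sparsity of the singular set provided by Lemma~\ref{lem:limited-singularities}. Multiplying \eqref{eq:main-errorL2} by $h$ and using $|w|_0^2=h\|w\|^2$ gives
\[
  \max_{2\leq k\leq N}|E^k|_0^2\ \leq\ C\Big(|E^0|_0^2+|E^1|_0^2+\tau\sum_{2\leq k\leq N}|\Eps^k(v)|_0^2\Big),
\]
so it suffices to prove $\tau\sum_k|\Eps^k(v)|_0^2\leq Ch^{1+2\delta}$, the two starting terms being of higher order. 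Since $u^0=v_0=v^0$ we have $E^0=0$, and the single backward Euler step bounds $|E^1|_0$ exactly as in the proof of Theorem~\ref{th:errors}$(i)$ by a quantity that is $O(h^4+h^{3+2\delta})$, hence negligible against $h^{1+2\delta}$.

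I would then classify each node $(t_k,x_i)$ as \emph{regular} if the whole space-time stencil $[t_{k-2},t_k]\times[x_{i-2},x_{i+2}]$ lies in $\mO^*_T$ and as \emph{singular} otherwise. At a regular node the data is locally $C^{3,4}_b$ by (A6), so the Taylor argument of Theorem~\ref{th:errors}$(i)$ yields the full second order bound $|\Eps^k_i(v)|\leq C(\tau^2+h^2)$. At a singular node I would not discard the estimate but keep the \emph{global} regularity granted by (A5): as $v_{xx}(t,\cdot)$ and $v_t(\cdot,x)$ are globally $C^{0,\delta}$, writing the second difference $D^2 v_i$ as a weighted average of $v_{xx}$ gives an $O(h^\delta)$ error, the spatial BDF derivative has $O(h^{1+\delta})$ error (the H\"older remainder of $v_{xx}$), and the time difference has $O(\tau^\delta)$ error, so $|\Eps^k_i(v)|\leq C(\tau^\delta+h^\delta)$ even on the singular set. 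The key point is that dividing a second difference by $h^2$ produces \emph{no} blow-up, precisely because (A5) keeps $v_{xx}$ H\"older-continuous across the singular curves.

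Splitting the double sum accordingly and using the CFL condition $\tau\leq Ch$, the regular part is controlled by $\tau h\, N I\, C(\tau^2+h^2)^2\leq C(\tau^2+h^2)^2=O(h^4)$, since $\tau N\approx T$ and $hI\approx|\mO|$. For the singular part I would invoke Lemma~\ref{lem:limited-singularities}: for each fixed $t_k$ there are at most $5p$ spatially and at most $Cp$ temporally non-regular indices, hence at most $C'pN$ singular nodes in total. Using $\tau N\approx T$, the weight $h$ of the rescaled norm and $(\tau^\delta+h^\delta)^2\leq Ch^{2\delta}$, this gives
\[
  \tau h\!\!\sum_{(k,i)\ \mathrm{sing}}\!\!|\Eps^k_i(v)|^2\ \leq\ \tau h\,(C'pN)\,Ch^{2\delta}\ =\ C'p\,(\tau N)\,h^{1+2\delta}\ \leq\ C''h^{1+2\delta}.
\]
Combining the two contributions gives $\tau\sum_k|\Eps^k(v)|_0^2=O(h^{1+2\delta})$, and the rescaled stability estimate then yields $\max_k|E^k|_0^2\leq Ch^{1+2\delta}$, i.e.\ $\max_k|E^k|_0\leq Ch^{1/2+\delta}$.

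The main obstacle is the singular-node estimate. One must resist the naive bound $|D^2 v_i|=O(1/h)$ at a kink and instead extract the sharp $O(h^\delta)$ bound from the global $C^{2,\delta}$ regularity of (A5); one must then verify that the count $C'pN$ of singular nodes, after the cancellation $\tau N\approx T$ and the weight $h$ coming from the $|\cdot|_0$ norm, is exactly what upgrades the pointwise $h^{2\delta}$ into the global $h^{1+2\delta}$. The extra half power $h^{1/2}$ relative to the uniform estimate of Theorem~\ref{th:errors}$(ii)$ is thus due entirely to the large $O(h^\delta)$ errors being confined to an $O(1)$-per-timestep, rather than $O(1/h)$-per-timestep, set of nodes.
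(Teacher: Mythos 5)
Your proposal is correct and follows essentially the same route as the paper: split each time level into the $O(p)$ singular nodes (bounded via Lemma~\ref{lem:limited-singularities} and the pointwise $O(\tau^\delta+h^\delta)$ truncation bound from (A5), as in Theorem~\ref{th:errors}$(ii)$) and the regular nodes (with the $O(\tau^2+h^2)$ bound from (A6)$(i)$, as in Theorem~\ref{th:errors}$(i)$), then insert the resulting $O(h^{1+2\delta})$ bound on $\tau\sum_k|\Eps^k(v)|_0^2$ into the stability estimate of Theorem~\ref{th:L2stab}. Your additional explicit treatment of $E^0=0$ and the first Euler step, which the paper leaves implicit, is consistent with its argument.
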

\begin{proof}
%For the piecewise smooth case (\ref{item_iii}), 
%For the piecewise smooth case $(i)$,
Let $\mathbb{I}^k$ be the (finite) set of indices $i$ such that $v$ is not regular in $ \{t_k\} \times (x_i-2h,x_i+2 h)
\cup (t_k-2\tau,t_k) \times \{x_i\}$.
Then
\beno
  |\mathcal{E}^k |_0^2 &  = &  \sum_{i\in \mathbb{I}} |\mathcal{E}_i^k|^2 h =  \sum_{i\in \mathbb{I}^k} |\mathcal{E}_i^k|^2 h
    + \sum_{i\in \mathbb{I}\backslash \mathbb{I}^k} |\mathcal{E}_i^k|^2 h \\
   & \leq &  C |\mathbb{I}^k|  (\tau^\delta + h^\delta)^2 h + C (\tau^2 + h^2)^2.
   % & \leq &  C p  (\tau^\delta + h^\delta)^2 h + C (\tau^2 + h^2)^2 \\
   % & \leq &  C' (\tau^\delta + h^\delta)^2 h 
\eeno

We then use the fact that $|\mathbb{I}^k|\leq C$ for some (different) constant $C$ by Lemma~\ref{lem:limited-singularities}
%(A6) and the CFL condition $\tau \leq C h$
and that $(\tau^2 + h^2)^2 = O(h^4) = O(h^{2+\delta})$, $\tau^\delta +h^\delta = O(h^\delta)$ by the CFL condition \eqref{eq:CFL}, 
in order to obtain the desired result.
%for some $C'\geq 0$, from which the estimate follows.
%Finally, case (\ref{item_iv}) follows similarly by noting that $\mathcal{E}_i^k$ is bounded for $i \in \mathbb{I}^k$.
\end{proof}

\begin{rem}
\begin{enumerate}[(i)]
\item
%(i)
Similar results can be derived for errors in the $A$-norm, however derivatives of one order higher are required due to the derivative in the definition of the norm.
\item
%(ii)
The estimates in Theorem \ref{th:errors} are not always sharp, as symmetries and the smoothing behaviour of the scheme can result in higher order convergence.
We discuss such special cases for Examples 1 and 2 in Section~\ref{sec:num}, Remarks \ref{rem-ex1} and \ref{rem-ex2}, respectively.
%\end{rem}
%\begin{rem}
\item
%(iii)
These error estimates can be compared with \cite{bokanowski-debrabant}, 
where an error bound of order $h^{1/2}$ was obtained
for diffusion problems with an obstacle term,
under the main assumption that $v_{xx}$ is a.e.\ bounded 
with a finite number of singularities
%(instead of the $C^{1+\delta, 2+\delta}$ regularity) 
(instead of (A5)) .
In the present context it seems natural to assume the H{\"o}lder regularity
of $u_t$ and $u_{xx}$ coming from the ellipticity assumption (see Remark \ref{rem:regular}).
\end{enumerate}
\end{rem}

%\begin{rem}
%As per Remark \ref{rem:piecewise}, the same order $\delta$ is obtained if Assumption (A5) is satisfied piecewise as long as the solution is $C^{1,2}$.
%For solutions which are only piecewise $C^{4}$ in space, the order is restricted to 1 generally, unless the solution is globally $C^3$.
%\end{rem}

%}

%-------------------------------------------
%-------------------------------------------
\section{Numerical tests}\label{sec:num}
%-------------------------------------------
%-------------------------------------------
We now compare the performance of the BDF2 scheme with other second order finite difference schemes on
 two examples.

%-------------------------------------------
\subsection{Test 1: Eikonal equation}\label{sec:num1}
%-------------------------------------------
The first example is based on a deterministic control problem ($\sigma\equiv 0$) 
and motivates the choice of the BDF2 approximation for the drift term in \eqref{eq:spaceBDF}, compared to the more classical
centered scheme~\eqref{eq:spaceC}.
%} 
%and in presence of a non degenerate diffusion
%the approximation used for the drift term becomes quite irrelevant.
We consider 
$$
\left\{
\begin{array}{ll}
v_t + |v_x| = 0 ,& x\in (-2,2), \; t\in (0,T), \\
v(0,x)= v_0(x), & x\in (-2,2),
\end{array}
\right.
$$
with $v_0(x)=\max(0, 1-x^2)^4$ and $T=0.2$.
The initial datum is shown in Figure \ref{fig:1} (dashed line). The exact solution is  
$$
v(t,x) = \min(v_0(x-t), v_0(x+t)).
$$
\begin{rem}
The Eikonal equation can be written as $v_t + \max_{a\in \{-1,1\}} (a v_x) = 0$
 in HJB form.
Note that our theoretical analysis does not cover this example, however,
%{\color{blue} \fbox{Discuss why.}} \vio{
since in the degenerate case assumption (A4) is required, which is not satisfied here.
\end{rem}

\begin{figure}
\centering
\includegraphics[width=0.49\columnwidth]{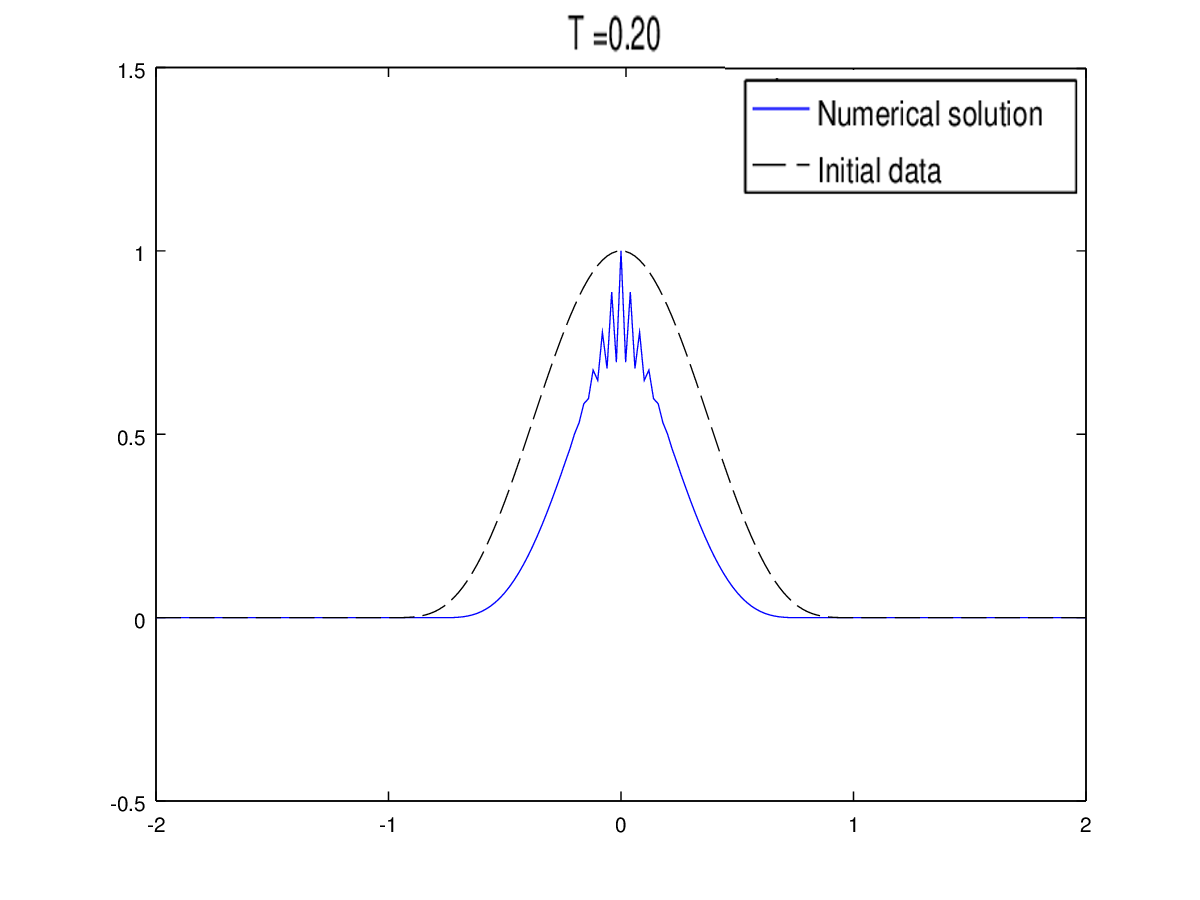}
\includegraphics[width=0.49\columnwidth]{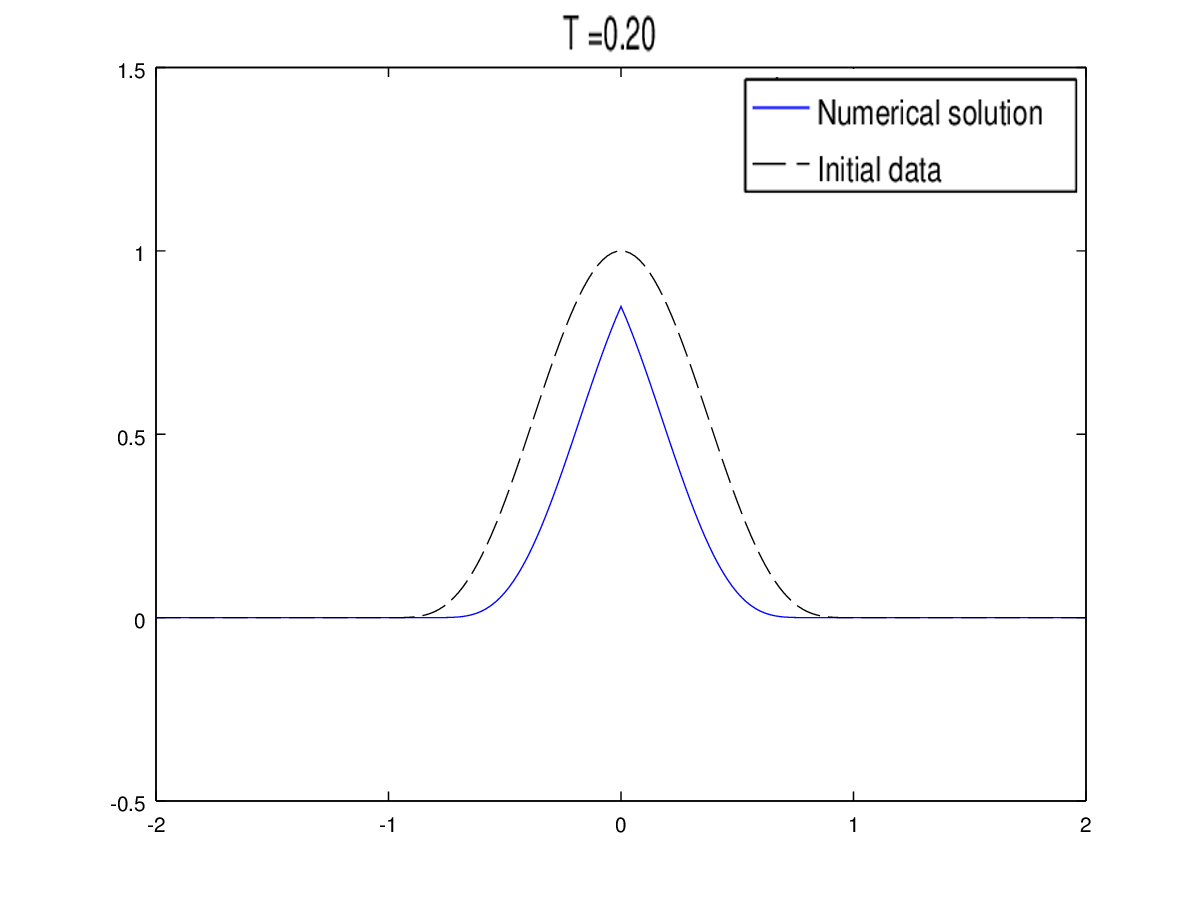} 
\caption{Test 1: Initial data (dashed line) and numerical solution at time $T=0.2$  computed for $I+1=200$ and $N=20$ ($\tau/h=0.5$)  using BDF in time and centred approximation of the drift (left), BDF in time and space (right).   
\label{fig:1}}
\end{figure}
In Figure \ref{fig:1}, we show the results obtained at the terminal time $T=0.2$ using schemes \eqref{eq:BDF2}-\eqref{eq:spaceC} (left) and \eqref{eq:BDF2}-\eqref{eq:spaceBDF} (right) with $\tau/h=0.5$.
We numerically observe that the centered approximation generates undesirable oscillations, whereas the BDF2 scheme is stable. 

{As stated in Theorem \ref{th:existence}, 
in case of a degenerate diffusion, a CFL condition of the form $\tau\leq Ch$  has to be satisfied for well-posedness of the BDF2 scheme.} 
%Furthermore, 
Table~\ref{tab:Test1} shows numerical convergence of order $2$ in both time and space,
although the solution is globally only Lipschitz.
\begin{table}[h!]
\begin{tabular}{c|c|cc|cc|cc|c}
  $N$ & $I+1$
   & \multicolumn{2}{|c|}{$H^1$ norm} 
   & \multicolumn{2}{|c|}{$L^2$-norm} 
   & \multicolumn{2}{|c }{$L^\infty$ norm} &  CPU (s)\\
\hline\hline
 &      & error    & order & error    & order & error     & order & \\
 5 &   10  & 5.35E-01 &   -    & 1.25E-01 &   -    & 1.36E-01 &   -  & 0.094 \\ 
   10 &   20  & 2.42E-01 &  1.14 & 4.51E-02 &  1.47  & 6.83E-02 &  0.99 & 0.096 \\ 
   20 &   40  &  8.25E-02 &  1.55  & 1.55E-02 &  1.55  & 2.01E-02 &  1.77 &  0.126 \\ 
   40 &   80  & 2.38E-02 &  1.80  & 4.32E-03 &  1.84  & 5.23E-03 &  1.94 & 0.147\\ 
   80 &  160  & 6.26E-03 &  1.92  & 1.11E-03 &  1.96  & 1.31E-03 &  2.00 & 0.194\\ 
  160 &  320  & 1.61E-03 &  1.96  & 2.79E-04 &  1.99  & 3.24E-04 &  2.01 & 0.335\\ 
  320 &  640  & 4.09E-04 &  1.98  & 7.10E-05 &  1.99  & 8.19E-05 &  2.00 &  0.759\\ 
    640 & 1280  & 1.03E-04 &  1.99  & 1.78E-05 &  2.00  & 2.05E-05 &  2.00 & 2.306 \\ 
\end{tabular}
\caption{Test 1. Error and convergence rate to the exact solution for the BDF2 scheme with $\tau/h=0.1$ and initial data $v_0(x)=\max(0, 1-x^2)^4$.}\label{tab:Test1}
\end{table}

{
\begin{rem}
\label{rem-ex1}
%global second order of the BDF2 scheme is observed,
The full convergence order here is due to the particular symmetry of the solution.
%we note that the equation holds at $x=0$ for both right-sided and left-sided limits of the $x$-derivative.
%As left- and right-sided finite differences are used for $x<0$ and $x>0$, respectively, and either side at $x=0$, the truncation error is of order $\tau^2 + h^2$ globally, and hence the $L^2$ convergence order is 2.
%For the $H^1$-norm, we note that the truncation error can be written as either $h^2 D_x^3 v + \tau^2 D_t^3 \tau$ or  $h D_x^2 v + \tau D_t^2 \tau$, where the derivatives
%are evaluated at suitable intermediate points.
%Note that $D_x^2 v$, $D_t^2 v$ and $D_t^3 v$ are all symmetric around $x=0$,
%as follows by differentiation of the PDE, % and Lipschitz (including at $x=0$).
%and hence the truncation error is symmetric and Lipschitz up to order $h$ and $\tau^2$.
%Therefore, assuming that a left-sided difference is used at $x_{-1}$ and a right-sided difference at $x_0$ (but the opposite case follows similarly),
%$h^{-1} (\mathcal{E}_0^k-\mathcal{E}_{-1}^k) = O(h)$ and
%\[
%| \mathcal{E}^k |_1^2 = |h^{-1} (\mathcal{E}_0^k-\mathcal{E}_{-1}^k) |^2 h %+ |h^{-1} (\mathcal{E}_1^k-\mathcal{E}_0^k) |^2 h
%+ %\sum_{i\notin \{0,1\}} 
%\sum_{i\neq 0} 
%|h^{-1} (\mathcal{E}_{i}^k - \mathcal{E}_{i-1}^k)|^2 h
%\le %C (\tau^2 + h^2)^2 h +
% C (\tau^2 + h^2)^2,
%\]
%from which again order 2 can be deduced.
{
To confirm this, we report in Table \ref{tab:Test1_2} the results obtained for the same equation with initial data 
$$
v(0,x)=-\max(0, 1-x^2)^4
$$ 
(see also Figure \ref{fig:test1_2}). In this case, there  is no such symmetry around the two singular points and as a result the
full convergence order is lost:
%Here, the loss of regularity in the second order derivative makes 
the scheme is globally only of order $1$ in the $H^1$ norm and roughly $1.5$ in the $L^2$ and $L^\infty$ norm.
\begin{figure}
\begin{minipage}{1\columnwidth}
\floatbox[{\capbeside\thisfloatsetup{capbesideposition={right,center},capbesidewidth=6cm}}]{figure}%[\FBwidth]
{\caption{Test 1: Initial data (dashed line) 
$
v_0(x)=-\max(0, 1-x^2)^4
$
and numerical solution at time $T=0.2$  computed for $I+1=200$ and $N=20$ ($\tau/h=0.5$)  using the BDF2 scheme. The convergence rates for this example are reported in Table \ref{tab:Test1_2}.
\label{fig:test1_2}}}
{\hspace{-1 cm} \includegraphics[width=1.15\columnwidth]{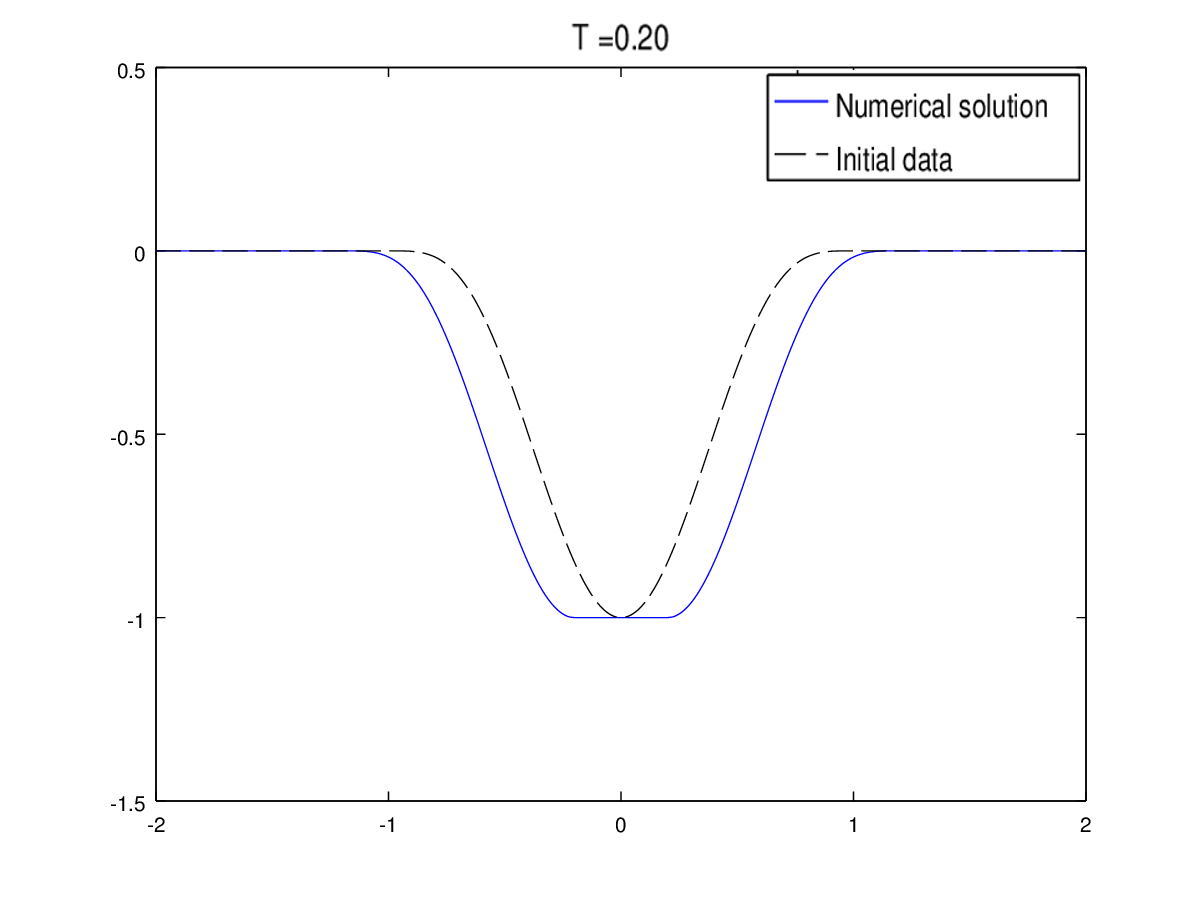}}
\end{minipage}
\end{figure}

\begin{table}[h!]
\begin{tabular}{c|c|cc|cc|cc|c}
  $N$ & $I+1$
   & \multicolumn{2}{|c|}{$H^1$ norm} 
   & \multicolumn{2}{|c|}{$L^2$ norm} 
   & \multicolumn{2}{|c }{$L^\infty$ norm} &  CPU (s)\\
\hline\hline
 &      & error    & order & error    & order & error     & order & \\
    5 &   10  & 5.84E-01 &   -    & 1.62E-01 &   -    & 1.51E-01 &   -   & 0.006\\ 
   10 &   20  & 2.69E-01 &  1.12  & 5.23E-02 &  1.63  & 6.20E-02 &  1.28  & 0.008\\ 
   20 &   40  & 1.45E-01 &  0.89  & 1.86E-02 &  1.49  & 2.08E-02 &  1.58  & 0.018\\ 
   40 &   80  & 6.74E-02 &  1.10  & 5.95E-03 &  1.64  & 7.89E-03 &  1.40  & 0.039\\ 
   80 &  160  & 3.20E-02 &  1.08  & 1.81E-03 &  1.72  & 3.57E-03 &  1.15  & 0.093\\ 
  160 &  320  & 1.60E-02 &  1.00  & 5.44E-04 &  1.73  & 1.51E-03 &  1.24  & 0.233\\ 
  320 &  640  & 8.16E-03 &  0.97  & 1.65E-04 &  1.72  & 6.33E-04 &  1.25  & 0.695\\ 
  640 & 1280  & 4.20E-03 &  0.96  & 5.09E-05 &  1.70  & 2.64E-04 &  1.26  & 2.163\\ 
% 1280 & 2560  & 2.17E-03 &  0.96  & 1.59E-05 &  1.68  & 1.06E-04 &  1.31  & \\ 
\end{tabular}
\caption{Test 1. Error and convergence rate to the exact solution for the BDF2 scheme with $\tau/h=0.1$ and initial data $v_0(x)=-\max(0, 1-x^2)^4$.}\label{tab:Test1_2}
\end{table}
}
%, since for the computation of the $H^1$-norm one-sided derivatives are used.
%\fbox{Need to re-think last part.}
%\\
%\fbox{Need to explain second order.}
\end{rem}
}

%-------------------------------------------
\subsection{Test 2: A simple controlled diffusion model equation}\label{sec:num2}
%-------------------------------------------

The second test we propose is a problem with controlled diffusion.
We consider 
$$
\left\{
\begin{array}{ll}
v_t + \sup_{\sigma\in\{\sigma_1,\sigma_2\}}\Big(-\frac{1}{2}\sigma^2v_{xx}\Big) = 0, & x\in (-1,1), t\in (0,T), \\
v(0,x)= \sin(\pi x), & x\in (-1,1),
\end{array}
\right.
$$
with parameters $\sigma_{1}=0.1$, $\sigma_{2}=0.5$, $T=0.5$. 

%given by Table \ref{tab:paramT2}
%\begin{table}[!hbtp]
%\centering
%\begin{tabular}{|c|c|c|c|c|c|}
%\hline 
%$\sigma_{1}$ & $\sigma_{2}$ &  $T$ \\
%\hline
%$0.1$ & $0.5$ & $0.5$  \\
%\hline
%\end{tabular}
%\caption{Parameters used in numerical experiments for Test 2.}
%\label{tab:paramT2}
%\end{table}

%{\color{blue} \fbox{Add comment on problems with the (non-monotone) Crank-Nicolson scheme.}}\\\vio{
In spite of the apparent simplicity of the equation under consideration, in \cite{pooley2003numerical} an example of non-convergence of the Crank-Nicolson scheme is given for a similar optimal control problem.
The BDF2 scheme, in contrast, has shown good performance for that same problem in
\cite{bokanowski2016high}.

Figure \ref{fig:2} (top row) shows the initial data and the value function at terminal time computed using the BDF2 scheme. The error and convergence rate in different norms are reported in Table \ref{tab:Test2_highCFL}. Here an accurate numerical solution computed by an implicit Euler scheme (in order to ensure convergence)  is used for comparison.
%\vio{
%\begin{rem}
%In order to ensure convergence, the reference numerical solution used to calculate the error is computed with the implicit Euler scheme. This only gives order 1 of convergence, therefore after some refinements the solution computed with a second order scheme will be closer to the exact solution than the reference one. We think this may affect the result on the last line of Table \ref{tab:Test2_highCFL}, which shows an order grater than 2 which probably does not correspond to the real order of convergence to the exact solution.\\
%\fbox{TO DISCUSS:}\\
%\fbox{leave it/use BDF as ref. sol. saying we checked its conv. with Euler/try to compute a more precise sol. with Euler}
%\fbox{AP to try run with more Euler timesteps for reference solution. Estimate increase factor 16 OK.} \\
%Instead, the loss of convergence in Table \ref{tab:Test2_CNhighCFL} it is not determined by the use of a numerical reference solution. This can be verified looking at  Table \ref{tab:value_rate}, which is built without the use of the reference solution and still shows some issues. Compare also with Figure \ref{fig:2} (bottom)  to see how this bad behavior of the CN scheme affects the second order derivative.
%\end{rem}
%}
\begin{figure}
\centering
\includegraphics[width=0.47\columnwidth]{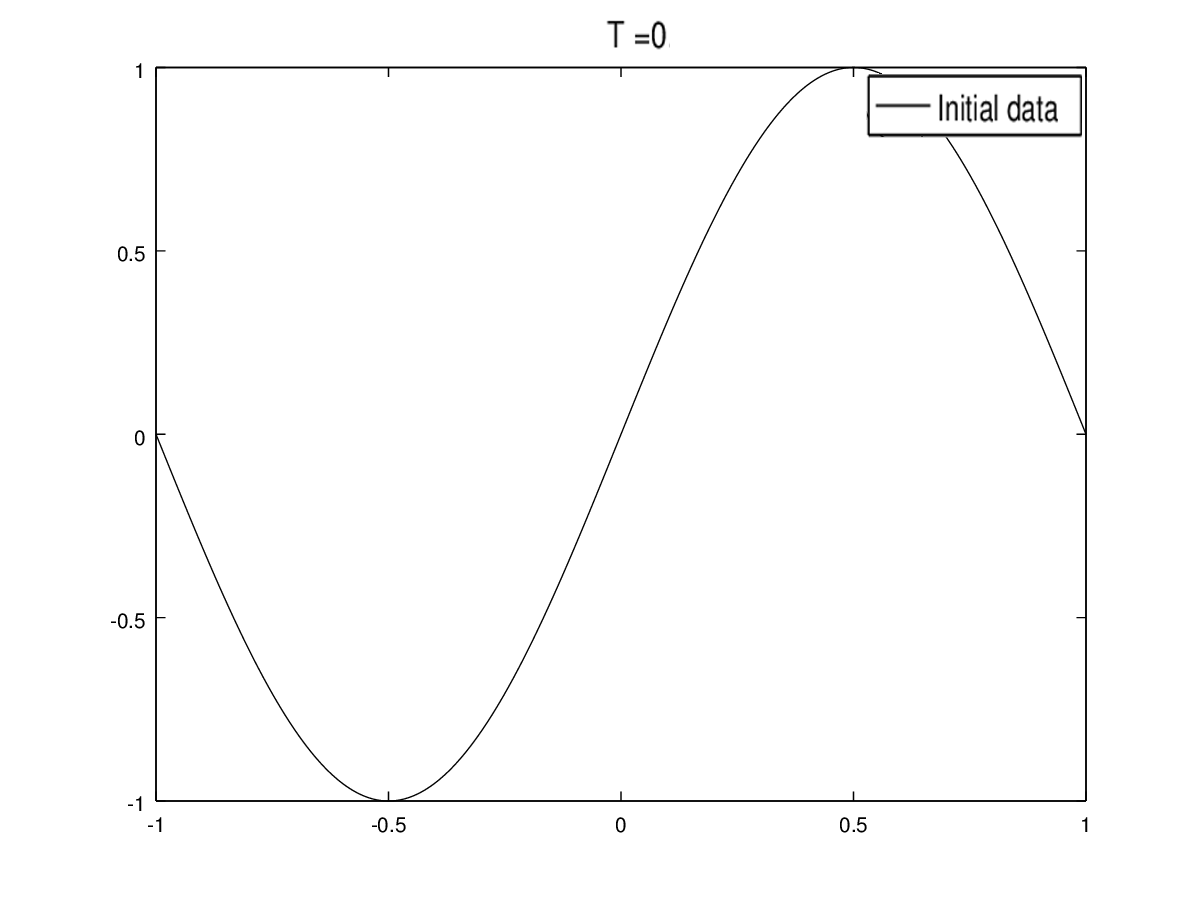}
\includegraphics[width=0.47\columnwidth]{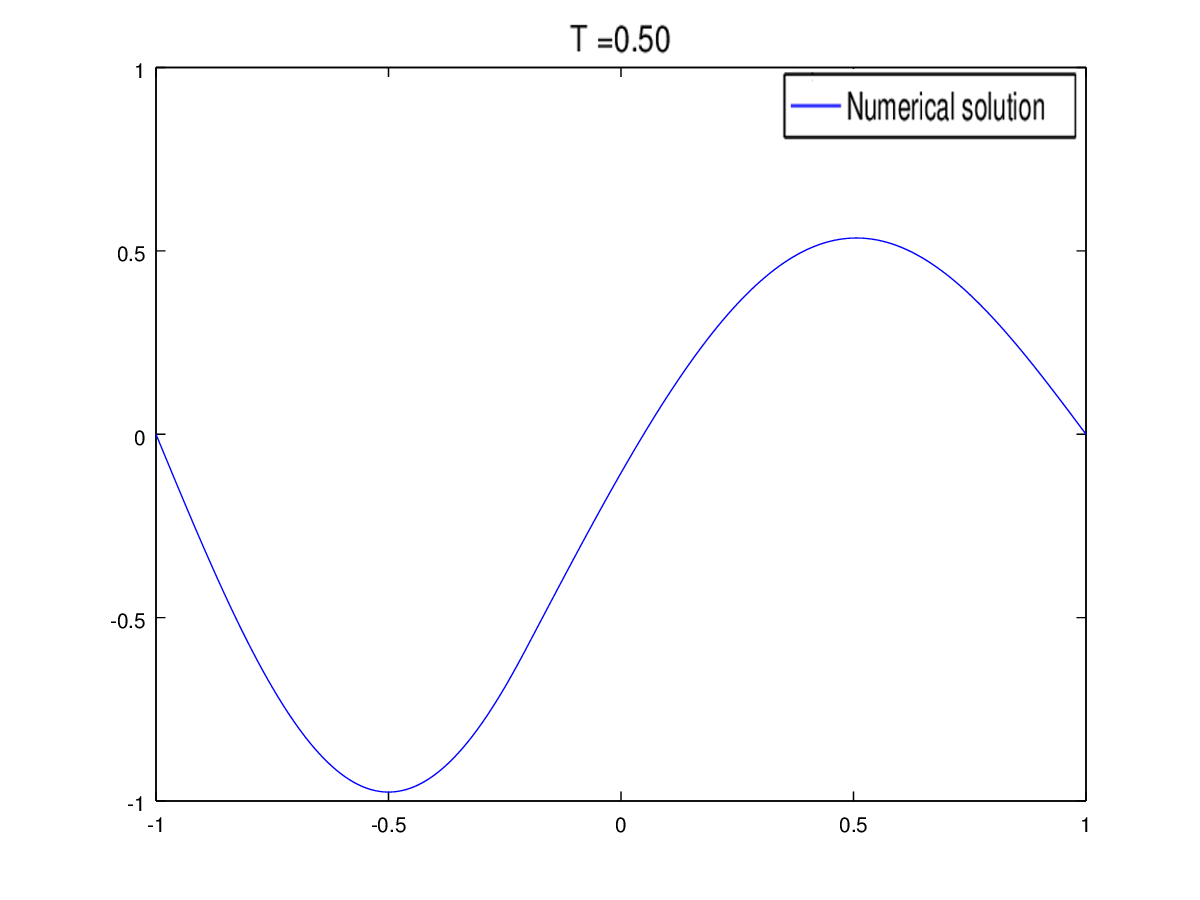} \\
\includegraphics[width=0.47\columnwidth]{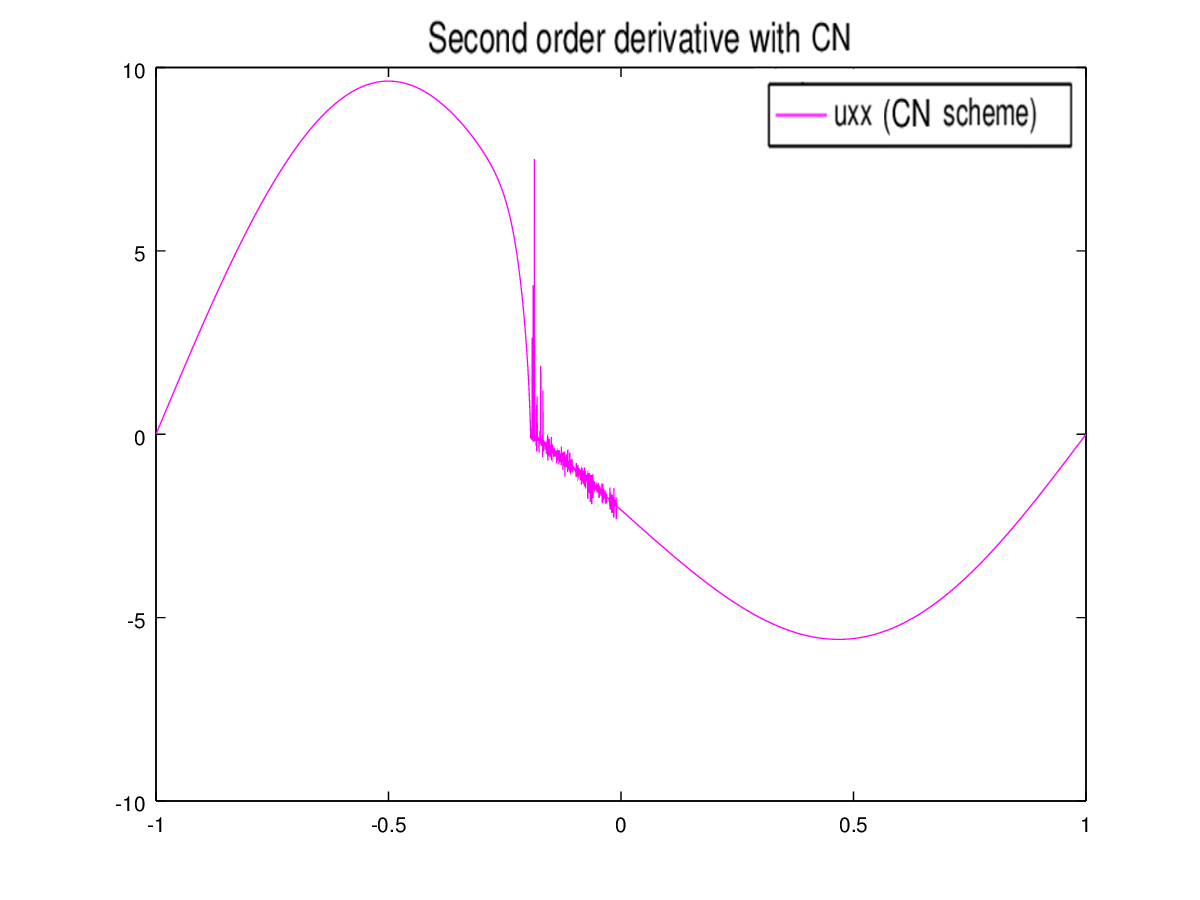}
  \includegraphics[width=0.47\columnwidth]{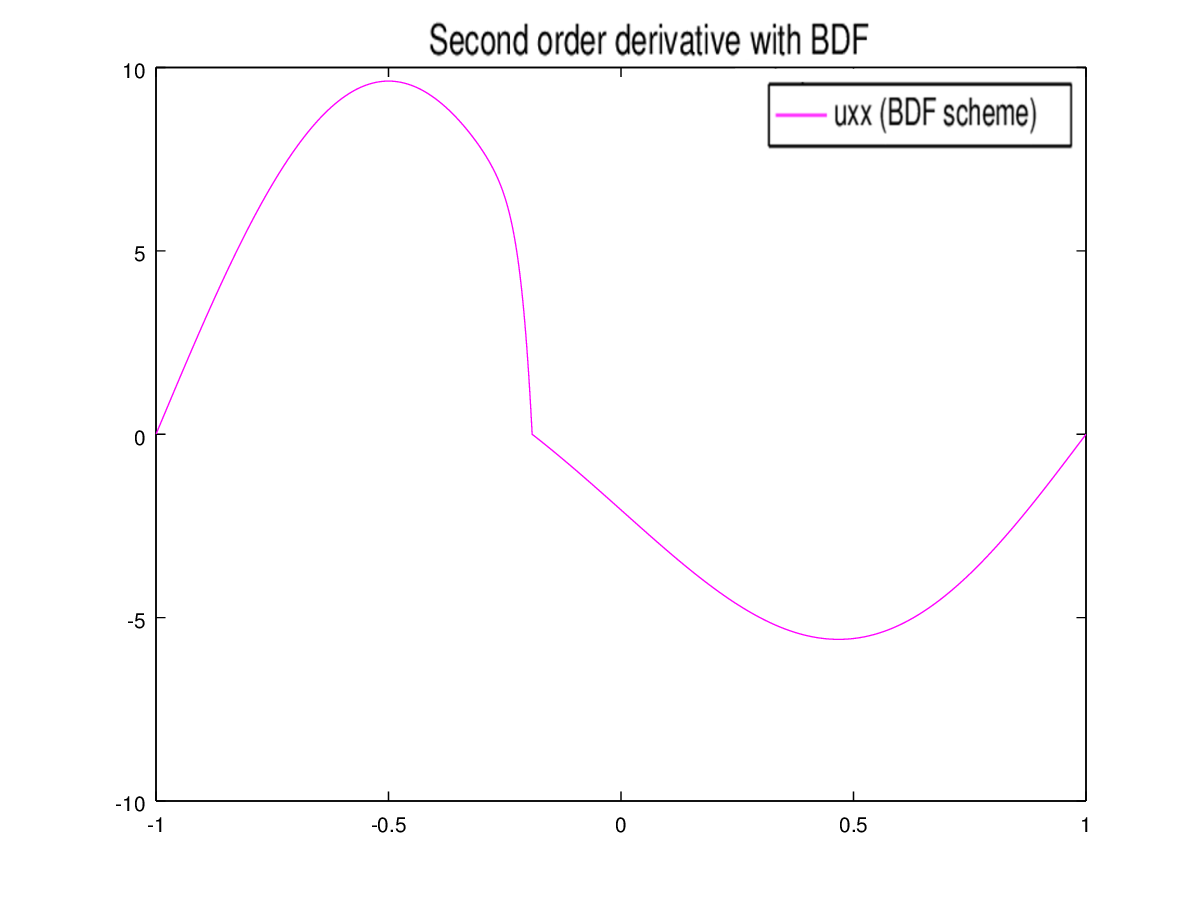} 
\caption{Test 2: Initial data (top, left), numerical solution at time $T=0.5$ (top, right) computed by the BDF2 scheme, second order derivative computed with CN scheme (bottom, left) and BDF2 (bottom, right) for $N=256$ and $I+1=5120$.   
\label{fig:2}}
\end{figure}

\begin{table}[h!]
\begin{tabular}{c|c|cc|cc|cc|c}
  $N$ & $I+1$
   & \multicolumn{2}{|c|}{$H^1$ norm} 
   & \multicolumn{2}{|c|}{$L^2$ norm} 
   & \multicolumn{2}{|c }{$L^\infty$ norm} & CPU (s) \\
\hline\hline
 &      & error    & order & error    & order & error     & order  \\
    1 &   20  &  1.54E-01 &   -     & 5.11E-02 &   -    & 7.24E-02 &   -    &  0.131\\ 
    2 &   40  &  5.53E-02 &  1.48  & 1.88E-02 &  1.45  & 2.63E-02 &  1.46 & 0.112 \\ 
    4 &   80  &  1.47E-02 &  1.91  & 5.17E-03 &  1.86  & 6.99E-03 &  1.91  & 0.111\\ 
    8 &  160  &  3.59E-03 &  2.04  & 1.27E-03 &  2.03  & 1.66E-03 &  2.08 &  0.122 \\ 
   16 &  320  &  8.98E-04 &  2.00   & 3.14E-04 &  2.02  & 4.09E-04 &  2.02  & 0.146\\ 
   32 &  640  &  2.26E-04 &  1.99  &  7.84E-05 &  2.00  & 1.02E-04 &  2.00  & 0.183\\ 
   64 & 1280  & 5.65E-05 &  2.00   & 1.96E-05 &  2.00  & 2.56E-05 &  2.00  &  0.267\\ 
  128 & 2560  & 1.42E-05 &  2.00 & 4.90E-06 &  2.00  & 6.42E-06 &  2.00  & 0.598 \\
   256 & 5120  & 1.21E-06 &  2.01  & 1.21E-06 &  2.01  & 1.59E-06 &  2.01  & 1.879\\ 
  
\end{tabular}

\caption{
Test 2. Error and convergence rate for the BDF2 scheme with high CFL number $\tau = 5 h$. A reference solution computed by the implicit Euler scheme \eqref{eq:BDF2-step1} with $I+1=20\times 2^{9}, N = 2^{22}$ is used.
}\label{tab:Test2_highCFL}
\end{table}

\begin{table}[h!]
\begin{tabular}{c|c|cc|cc|cc|c}
  $N$ & $I+1$
   & \multicolumn{2}{|c|}{$H^1$ norm} 
   & \multicolumn{2}{|c|}{$L^2$ norm} 
   & \multicolumn{2}{|c }{$L^\infty$ norm} & CPU (s) \\
\hline\hline
 &      & error    & order & error    & order & error     & order  \\
  1 &   20  &  4.11E-02 &   -    & 7.01E-03 &   -    & 9.44E-03 &   -   &  0.149 \\ 
 2 &   40  &  7.82E-03 &  2.39  & 1.45E-03 &  2.27  & 2.29E-03 &  2.04 & 0.113  \\ 
4 &   80  &  1.97E-03 &  1.99  & 3.87E-04 &  1.91  & 5.62E-04 &  2.03 & 0.111 \\ 
8 &  160  &  5.16E-04 &  1.94  & 1.02E-04 &  1.92  & 1.45E-04 &  1.95 & 0.128 \\ 
16 &  320  &  1.09E-04 &  2.24  & 2.67E-05 &  1.94  & 3.77E-05 &  1.95  & 0.166\\ 
32 &  640  & 2.96E-05 &  1.88  & 7.15E-06 &  1.90  & 9.87E-06 &  1.93 & 0.188 \\ 
64 & 1280  & 7.64E-06 &  1.96  & 2.03E-06 &  1.82  & 2.61E-06 &  1.92  & 0.310\\ 
128 & 2560   & 9.50E-05 & -3.64  & 1.98E-05 & -3.29  & 3.49E-05 & -3.74  & 0.992\\ 
256 & 5120   & 7.18E-04 & -2.92  & 8.40E-05 & -2.08  & 1.62E-04 & -2.22  &  4.251
    \end{tabular}
  \caption{
  Test 2. Error and convergence rate for the CN scheme with high CFL number $\tau = 5 h$. A reference solution computed by the implicit Euler scheme \eqref{eq:BDF2-step1} with $I+1=20\times 2^{9}, N = 2^{22}$ is used.}\label{tab:Test2_CNhighCFL}
  \end{table}

{
Taking $\tau\sim h$ the BDF2 scheme gives clear second order convergence, see Table \ref{tab:Test2_highCFL}. This is not the case for CN as shown in  Table \ref{tab:Test2_CNhighCFL}.
The CN scheme also exhibits 
some  instability in the second order derivative for high CFL number, i.e.\ $\tau/h$, see Figure \ref{fig:2} (this is analogous to the finding in \cite{pooley2003numerical}). One can verify that for a small CFL number, i.e.\ $\tau\sim h^2$, the CN scheme shows second order of convergence.
}

{

\begin{rem}
\label{rem-ex2}
%\todo{TO Revise}
In this example, due to the strict ellipticity, Assumption (A5) is guaranteed for some $\delta>0$ (see Remark \ref{rem:regular}).
Then Theorem \ref{th:errors}
gives convergence with order $\delta$.
Furthermore, Fig.~\ref{fig:2}, bottom row, suggests H{\"older} continuity of $u_{xx}$ in $x$,
which is expected by virtue of the control being piecewise constant.
Therefore, we conjecture that Assumption (A6) is satisfied, such that Theorem \ref{th:errors_piecewise} would give the higher order $1/2+\delta$.
In the test, %due to the smoothing property of the strictly elliptic operator, 
in fact the full order 2 is observed (see Table \ref{tab:Test2_highCFL}).

%From Fig.~\ref{fig:2} we conjecture that $D_x^2 v$ is Lipschitz in $x$ {\color{red} (not so clear...maybe leave $\delta$ generic?)}, and from the PDE we deduce that $v_t$ is also Lipschitz in $t$ (the volatility is piecewise constant and jumps only where $D_x^2 v =0$) {\color{red}\xcancel{ so $v \in C^{1+\delta,2+\delta}$ with $\delta=1$}}. %so that %$\mathcal{E} \sim h D_x^3 v + \tau D_t^2 v$ where the derivatives are bounded globally.
%From Theorem \ref{th:errors_piecewise}, we therefore expect order $3/2$ {\color{red} (to check)} in the $L^2$-norm.
%For the $H^1$-norm, we can only deduce that $\mathcal{E}$ is globally Lipschitz, but $D_x \mathcal{E}$ is discontinuous at the point where the optimal control switches, which results in a provable order of $1/2$ by similar reasoning to Theorem \ref{th:errors_piecewise}.

%The numerically observed order of 2 in both the $L^2$- and $H^1$-norms can be rationalised as follows. The leading order term in the truncation error when expanding to second order in $h$, $D_x^4$, is a sum of a bounded function and a Dirac delta function, which prevents classical Taylor expansion to that order and does not result in a practical
%estimate as $\delta \notin L^2$. However, 
%the recursion in Lemma \ref{lem:nonlin2lin} is still meaningful for a discrete approximation to $\delta$
%because of the smoothing property of the strongly elliptic operator and of the BDF scheme, this Dirac delta gets smoothed over finite times and results in a bounded contribution to the $L^2$-norm. A similar argument can be made for the $H^1$-norm.
\end{rem}

}

\section{Conclusion}  \label{sec:conclusion}
 
 We have proved the well-posedness and stability in $L^2$ and $H^1$ norms of a second order  BDF scheme for HJB equations
 with enough regularity of the coefficients.
 The significance of the results is that this was achieved for a second order (and hence) non-monotone scheme.
 For smooth or piecewise smooth solutions, as is often the case, one can use the recursion we derived to bound the error of the numerical solution in terms of the truncation error of the scheme.
 The latter depends on the regularity of the solution and has to be estimated for individual examples.
 
 The numerical tests demonstrate convergence at least as good as predicted by the theoretical results, and often better, due to symmetries of the solution or smoothing properties of
 the equation and the scheme. This is in contrast to some alternative second order schemes, such as the central spatial difference in the case of a first order equation, or the Crank-Nicolson time stepping scheme for a second order equation, which can show poor or no convergence.

%------------------------------------------------------------
\appendix
%------------------------------------------------------------

%%------------
%% Appendix A
%%------------

\section{
Proof of Lemma~\ref{lem:exist}}\label{app:A}
%----------------------------------------
%\todo[inline]{ Should we replace $\max$ with $\sup$?}  % OLIVIER : OK
In order to prove the existence and uniqueness of a solution to \eqref{eq:lem-1},
we consider a fixed-point approach.
The initial problem \eqref{eq:lem-1} can be written as follows:
\be\label{eq:lemexists-1}
  \sup_{a\in \Lambda} (L_a X - (q_a-U_a X)) = 0,
\ee
where $L_a$ and $U_a$ are two matrices such that $M_a \equiv L_a + U_a$. 
We consider in particular $L_a$ to be the lower triangular part of $M_a$ including the diagonal terms, $(L_a)_{ij}:=(M_a)_{ij} 1_{i\geq j}$,
and $U_a$ the remaining upper triangular part, $(U_a)_{ij}:=(M_a)_{ij} 1_{i<j}$.

For a given vector $c\in \R^I$, let $g(c):=X$ denote the (unique) 
solution of the following simplified problem:
\be\label{eq:lemexists-2}
  \sup_{a\in \Lambda} (L_a X - (q_a-U_a c)) = 0.
\ee
Indeed, because $(L_a)_{ii}=(M_a)_{ii}>0$, denoting $v_a:=q_a - U_a c$,  it is easy to see by recursion in $i$ that the unique solution of 
$$ 
  \sup_{a\in \Lambda} (L_a X - v_a) = 0
$$
is given by 
$$
   x_i:=\inf_{a\in \Lambda} \bigg({\Big(}(v_a)_i - \sum_{k=1}^{i-1} (L_a)_{ik} x_k{\Big)} / (L_a)_{ii} \bigg).
$$
Therefore, solving \eqref{eq:lemexists-1} amounts to solving $g(X)=X$.
%\MODIFO
{By elementary compuations one can show that}  % OLIVIER NOT IMMEDIATLY OBVIOUS BUT KNONW ... 
$g$ is $\delta$-Lipschitz for the $\|.\|_\infty$ norm, with $\delta:=\sup_a \| (L_a)^{-1} U_a\|_\infty$.
%\todo[inline]{The sentence above is not so clear to me.}
%\todo[inline]{If we have a $\sup$?}

For a diagonally dominant matrix, the following classical estimate holds
$$
  \| (L_a)^{-1} U_a\|_\infty \leq \sup_{i\in \mathbb{I}} \frac{\sum_{j>i} |(M_a)_{ij}|}{|(M_a)_{ii}| - \sum_{j<i} |(M_a)_{ij}|}
$$
(this is related to the Gauss-Seidel relaxation method; see for instance, Th.\,8.2.12 in~\cite{sto-bul-1993}).
By using the assumptions on the matrices $M_a$, we have $\delta<1$. 
Hence, $g$ is a contraction mapping on $\R^I$ 
and therefore we obtain the existence and uniqueness of a solution of \eqref{eq:lemexists-1} as desired.
$\Box$

%\appendix

%------------------------------------------------------------
%\bibliographystyle{abbrv}
 \bibliographystyle{plain}
 \bibliography{biblio}
%------------------------------------------------------------

%\pagebreak
%%------------
%% Appendix  B (Von Neuman)
%%------------
%\input{appendixB.tex}	%% Von Neuman Stab Analysis
%\input{appendixC.tex}	%% 2D stab. analysis

%------------------------------------------------------------
%- NOTES
%------------------------------------------------------------
%\pagebreak
%\centerline{\bf WORKING NOTES}
%\input{notes1.tex} %- Other convergences results : moved here

\end{document}